\numberwithin{equation}{section}
\newtheorem{theorem}{Theorem}
\numberwithin{theorem}{section}
\newtheorem{lem}[theorem]{Lemma}
\newtheorem{cor}[theorem]{Corollary}
\newtheorem{prop}[theorem]{Proposition}
\newtheorem{defi}[theorem]{Definition}
\newtheorem{obs}[theorem]{Remark}
\newcommand{\R}{\ensuremath{\mathbb{R}}}
\newcommand{\N}{\ensuremath{\mathbb{N}}}
\begin{document}
\title{On Hamiltonian systems with critical Sobolev exponents}

\author[A. Guimar\~aes]{Angelo Guimar\~aes}
\address[Angelo Guimar\~aes]{Instituto de Ci\^encias Matem\'aticas e de Computa\c{c}\~ao \newline \indent Universidade de S\~ao Paulo \newline \indent Caixa Postal 668, CEP 13560-970 - S\~ao Carlos - SP - Brazil}
\email{\tt g.angelo@usp.br}

\author[E. M. dos Santos]{Ederson Moreira dos santos}
\address[Ederson Moreira dos Santos]{Instituto de Ci\^encias Matem\'aticas e de Computa\c{c}\~ao \newline \indent Universidade de S\~ao Paulo \newline \indent Caixa Postal 668, CEP 13560-970 - S\~ao Carlos - SP - Brazil}
\email{\tt ederson@icmc.usp.br}

\date{\today}

\keywords{Lane-Emden systems; Critical hyperbola; Critical dimension; Positive solutions.}

\subjclass[2010]{35J47, 35J30, 35B33. }

\thanks{E.M. dos Santos has been partially supported by CNPq grant 309006/2019-8.  A. Guimar\~aes has been supported by CAPES (Finance code 001). We thank Professors J\'essyca L. F. Melo Gurj\~ao and Marcelo Furtado for some enlightening discussions at the beginning of this project.}

\begin{abstract}
In this paper we consider lower order perturbations of the critical Lane-Emden system posed on a bounded smooth domain $\Omega \subset \mathbb{R}^N$, with $N \geq3$, inspired by the classical results of Brezis and Nirenberg \cite{BrezisNirenberg1983}. We solve the problem of finding a positive solution for all dimensions $N \geq 4$. For the critical dimension $N=3$ we show a new phenomenon, not observed for scalar problems. Namely, there are parts on the critical hyperbola where solutions exist for all $1$-homogeneous or subcritical superlinear perturbations and parts where there are no solutions for some of those perturbations.
\end{abstract}

\maketitle

\section{Introduction}

In the memorable paper \cite{BrezisNirenberg1983} from 1983, Brezis and Nirenberg considered the perturbed Lane-Emden equation with critical growth
\begin{equation}\label{eq:BN}
-\Delta u = \lambda u^t + u^{2^*-1}\ \ \text{in} \ \ \Omega, \ \ u>0 \ \ \text{in} \ \ \Omega, \ \ u =0 \ \ \text{on} \ \ \partial \Omega,
\end{equation}
in a bounded smooth domain $\Omega \subset \mathbb{R}^N$, $N \geq 3$, where $2^* = 2N/(N-2)$ is the critical Sobolev exponent for the embedding of $H^1_0(\Omega)$, with $1 \leq t < 2^*-1$. In particular, they discovered a surprising difference between the cases $N\geq 4$ and $N=3$, the latter named as critical dimension. For the particular case with $t=1$, namely for
\begin{equation}\label{eq:BNlinear}
-\Delta u = \lambda u + u^{2^*-1}\ \ \text{in} \ \ \Omega, \ \ u>0 \ \ \text{in} \ \ \Omega, \ \ u =0 \ \ \text{on} \ \ \partial \Omega,
\end{equation}
they proved the existence of a solution for every $0 < \lambda < \lambda_1(\Omega)$, the optimal interval for existence, for $N \geq 4$. In contrast, with $N=3$, they showed the existence of  $0 <\lambda^* < \lambda_1(\Omega)$ such that no solution exists for $0 < \lambda < \lambda^*$; see \cite[Theorem 1.2 and Corollary 1.1]{BrezisNirenberg1983}. Here $\lambda_1= \lambda_1(\Omega)$ stands for the first eigenvalue of $(-\Delta, H^1_0(\Omega))$.

The notion of critical growth for Hamiltonian systems, as independently introduced by Mitidieri \cite{Mitidieri_1993} and van der Vorst \cite{van1992variational}, soon after considered by several authors, including Cl\'ement et al. \cite{Cl_ment_1992} and Peletier-van der Vorst \cite{zbMATH00090502}, is given by the so-called critical hyperbola. In 1998, Hulshof et al. \cite{HMV} analyzed the version of \eqref{eq:BNlinear} in the framework of Hamiltonian systems, namely they considered
\[
\left\{
\begin{array}{lll}
-\Delta u = \lambda v+ |v|^{p-1}v\text{ in } \Omega,\\
-\Delta v = \mu u+ |u|^{q-1}u\text{ in } \Omega,\\
u,v=0 \text{ on } \partial\Omega,
\end{array}
\right.
\]
with $N\geq 4$, for $(p,q)$ on the critical hyperbola
\begin{equation}\label{pqHC}
\frac{1}{p+1} + \frac{1}{q+1}= \frac{N-2}{N}.
\end{equation}

Very interesting results were proved in \cite[Theorem 2]{HMV} and we think that three important problems were left open: 
\begin{itemize}
\item[a)] What does happen in dimension $N=3$\,?
\item[b)] What is the meaning of critical dimension for Hamiltonian elliptic systems\,?
\item[c)] The investigation of the general $1$-homogenous perturbation of the critical Lane-Emden system, namely \eqref{sist} ahead with $rs=1$, which includes $r=s=1$ as a particular case.
\end{itemize}

Item c) deserves some extra comments, since the most accurate 1-homogenous perturbation to Hamiltonian systems, given below in \eqref{sist}, is induced by the hyperbola of points $(r,s)$ such that $rs=1$. Indeed, this hyperbola has been named as the spectral curve  for Hamiltonian systems; see \cite{zbMATH01467776,Leite_2019,Leite_2020} for linear operators and \cite{Santos:2020tv} in the fully nonlinear scenario. In this paper we address these three questions and present some results observed in the framework of Hamiltonian systems which are non-existent for scalar problems. In order to accomplish that, consider the following Hamiltonian system 

\begin{equation}\tag{HS}\label{sist}
\left\{
\begin{array}{lll}
-\Delta u = \lambda |v|^{r-1}v+ |v|^{p-1}v\text{ in } \Omega,\\
-\Delta v = \mu |u|^{s-1}u+ |u|^{q-1}u\text{ in } \Omega,\\
u,v=0 \text{ on } \partial\Omega,
\end{array}
\right.
\end{equation}
in a bounded smooth domain $\Omega \subset \mathbb{R}^N$, $N\geq 3$, $\lambda>0$ and $\mu>0$. Here $(p,q)$ lies on the critical hyperbola, that is $p>0$ and $q>0$ satisfy \eqref{pqHC}, and $(r,s)$ is such that
\begin{equation}\label{rscondition}
0<r <p, \ \ 0<s<q, \ \  rs\geq1.
\end{equation}

Since $\lambda> 0$ and $\mu>0$, the critical growth system \eqref{sist} can be seen as a lower order perturbation of the Lane-Emden critical system 
\[
-\Delta u =  |v|^{p-1}v, \quad -\Delta v = |u|^{q-1}u \quad \text{in} \ \ \Omega, \quad u=v=0 \quad \partial \Omega,
\]
as \eqref{eq:BN} is a lower order perturbation of the critical Lane-Emden equation
\[
-\Delta u = u^{2^*-1}, \ \ u>0 \ \ \text{in} \ \ \Omega, \ \ u =0 \ \ \text{on} \ \ \partial \Omega.
\]
Moreover, condition \eqref{rscondition} on $(r,s)$ for \eqref{sist} corresponds to condition $1 \leq t < 2^*-1$ for \eqref{eq:BN}.

The main result proved in this paper reads as follows.
\begin{theorem}\label{theo1}
Let $\lambda>0$, $\mu>0$, assume \eqref{rscondition} and in case $rs=1$ also suppose that $\lambda \mu^r$ is suitably small. If $N\geq 4$ or, $N=3$ and $p\leq7/2$ or $p\geq8$, then \eqref{sist} has a classical positive solution.
\end{theorem}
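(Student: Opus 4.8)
The plan is to carry the Brezis--Nirenberg scheme over to the Hamiltonian setting. Note first that \eqref{sist} reads $-\Delta u = H_v(u,v)$, $-\Delta v = H_u(u,v)$ for the Hamiltonian
\[
H(u,v) = \frac{\lambda}{r+1}|v|^{r+1} + \frac{1}{p+1}|v|^{p+1} + \frac{\mu}{s+1}|u|^{s+1} + \frac{1}{q+1}|u|^{q+1},
\]
and that \eqref{rscondition} forces $\tfrac{1}{r+1}+\tfrac{1}{s+1}\le 1$, so the $(\lambda,\mu)$-terms are subcritical and give only compact contributions, while the pair $(p+1,q+1)$ carries the critical growth tied to \eqref{pqHC}. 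I would set the problem up variationally in a framework adapted to Hamiltonian systems --- fractional Sobolev spaces attached to the Dirichlet Laplacian, or the dual method of Cl\'ement et al.\ in $L^{(p+1)/p}\times L^{(q+1)/q}$ --- and, using that $H$ is convex in each variable separately, reduce the resulting strongly indefinite functional to a $C^1$ functional $\Phi_{\lambda,\mu}$ on a single space. One then checks that $\Phi_{\lambda,\mu}$ has mountain-pass geometry: when $rs>1$ the perturbation is superlinear (in the system sense) and subcritical, hence does not affect the geometry near the origin, while when $rs=1$ it is $1$-homogeneous and smallness of $\lambda\mu^r$ (the relevant scaling-invariant parameter) plays precisely the role of the condition $\lambda<\lambda_1(\Omega)$ in \eqref{eq:BNlinear}, keeping the quadratic part positive on a small sphere. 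Let $c_{\lambda,\mu}>0$ denote the mountain-pass level.

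The second step is a local Palais--Smale condition. Let $\mathcal{S}=\mathcal{S}(N,p,q)$ be the best constant in the Sobolev-type inequality underlying \eqref{pqHC}, whose extremals are the explicit ``system bubbles'' $(U,V)$ on $\mathbb{R}^N$ --- the Aubin--Talenti profiles adapted to $(p,q)$ obtained by Hulshof--van der Vorst --- and which is not attained on bounded domains. A concentration--compactness argument (splitting a Palais--Smale sequence into a weak limit plus finitely many rescaled bubbles, the subcritical $(\lambda,\mu)$-terms passing to the limit by compactness) then yields that $\Phi_{\lambda,\mu}$ satisfies $(PS)_c$ for every $c<c^*$, where $c^*=c^*(N,p,q)$ is the first non-compactness level, an explicit constant times a power of $\mathcal{S}$.

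The heart of the matter is the strict inequality $c_{\lambda,\mu}<c^*$. For this I would take as competitor along the mountain-pass path the system bubbles $(U_\varepsilon,V_\varepsilon)$, concentrated at an interior point of $\Omega$ and cut off near $\partial\Omega$, and expand the energy as $\varepsilon\to 0$ in the schematic form
\[
\Phi_{\lambda,\mu}\bigl(t(U_\varepsilon,V_\varepsilon)\bigr) \le c^* - c_1\Bigl(\lambda\int_\Omega |V_\varepsilon|^{r+1} + \mu\int_\Omega |U_\varepsilon|^{s+1}\Bigr) + \mathcal{E}_\varepsilon,
\]
where the gain in the middle comes from the $(\lambda,\mu)$-part of $H$ and the cut-off error $\mathcal{E}_\varepsilon$ is a power of $\varepsilon$ produced by the tails of $U_\varepsilon,V_\varepsilon$, which decay at the distinct rates $|x|^{-(N-2\alpha)}$ and $|x|^{-(N-2\beta)}$ with $\alpha+\beta=2$, $N-2\alpha=2N/(p+1)$, $N-2\beta=2N/(q+1)$. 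For $N\ge 4$ one has $\mathcal{E}_\varepsilon = o\bigl(\lambda\int|V_\varepsilon|^{r+1} + \mu\int|U_\varepsilon|^{s+1}\bigr)$ --- using the smallness of $\lambda\mu^r$ when $rs=1$, automatically when $rs>1$ --- so $c_{\lambda,\mu}<c^*$ follows. For $N=3$ the two quantities are of comparable order, and the balance is delicate precisely because the two bubble components decay at different, $(p,q)$-dependent rates; carrying out the estimate uniformly over all $(r,s)$ as in \eqref{rscondition} (the extremal case being $rs=1$), one finds that the gain wins exactly when $p\le 7/2$ or $p\ge 8$, equivalently when $\min\{p,q\}\le 7/2$. \emph{This asymptotic balance is the main obstacle}: unlike in scalar Brezis--Nirenberg, where a single parameter governs the competition between $\int U_\varepsilon^2$ and the truncation tail, here one must track several integrals --- $\int|V_\varepsilon|^{r+1}$, $\int|U_\varepsilon|^{s+1}$, and the tail and cross terms --- whose relative sizes vary along the critical hyperbola, and it is this bookkeeping that singles out the ``bad'' interval $7/2<p<8$.

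Finally, once $c_{\lambda,\mu}<c^*$ is in place the mountain-pass theorem gives a nontrivial critical point of $\Phi_{\lambda,\mu}$, hence a nontrivial weak solution $(u,v)$ of \eqref{sist}. Running the argument with the one-sided nonlinearities $(v^+)^p$, $(u^+)^q$ (and likewise for the $\lambda,\mu$-terms) and testing with negative parts forces $u,v\ge 0$; a regularity bootstrap in the fractional/dual framework --- using that the nonlinearities are controlled by $|v|^p$, $|u|^q$ with $(p,q)$ on the critical hyperbola --- upgrades $(u,v)$ to a classical solution; and the strong maximum principle applied to each equation gives $u,v>0$ in $\Omega$, so that $(u,v)$ in fact solves \eqref{sist}.
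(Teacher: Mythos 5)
Your proposal takes a genuinely different variational route from the paper --- a Hamiltonian/dual (or fractional) framework that keeps the coupled structure --- whereas the paper performs a reduction by inversion, rewriting \eqref{sist} as the scalar fourth-order quasilinear problem \eqref{prob} in $E_p=W^{2,\frac{p+1}{p}}\cap W^{1,\frac{p+1}{p}}_0$ with the nonlinear operator $\Delta(f_\lambda^{-1}(\Delta u))$. That choice is not cosmetic: once you invert $f_\lambda(t)=\lambda|t|^{r-1}t+|t|^{p-1}t$, the contribution of the $\lambda$-perturbation to the mountain-pass level is \emph{not} a clean additive term $\lambda\int|V_\varepsilon|^{r+1}$ as your expansion suggests; it appears through the competition, inside $f_\lambda^{-1}(t\Delta V_{\delta,a})$, between regions where $|\Delta V_{\delta,a}|$ is large (the $p$-power dominates) and regions where it is small (the $r$-power dominates). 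The paper extracts this by an intricate decomposition of $\Omega$ into a core ball and two rings, with a carefully tuned exponent $M$, and three separate case analyses in $q$ (Lemma~\ref{estimateFtv}). If you instead work in the Hamiltonian/dual picture, you do get a literal $\lambda\int|V_\varepsilon|^{r+1}+\mu\int|U_\varepsilon|^{s+1}$ gain, which is conceptually cleaner; the cost is a strongly indefinite functional needing a linking or Nehari--Pankov reduction before one can speak of a ``mountain-pass level,'' which you acknowledge but do not carry out.

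The genuine gap, however, is that the decisive step --- showing the cut-off error is dominated by the perturbative gain, and that for $N=3$ this works precisely when $p\le 7/2$ or $p\ge 8$ --- is asserted, not proved, and the asymptotics you quote for the bubble are wrong. You claim $U,V$ decay like $|x|^{-2N/(p+1)}$ and $|x|^{-2N/(q+1)}$, but these exponents match the actual Hulshof--van der Vorst decay only at the symmetric point $p=q=(N+2)/(N-2)$. Away from it the decays are piecewise: when $\frac{2}{N-2}<p<\frac{N}{N-2}$ one has $\varphi\sim|x|^{-(p(N-2)-2)}$ and $\psi\sim|x|^{-(N-2)}$; for $\frac{N}{N-2}<p<\frac{N^2+2N-4}{N^2-4N+4}$ both decay at the harmonic rate $|x|^{-(N-2)}$; in the remaining regime the roles swap; and there are logarithmic corrections at the borderline exponents. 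These regimes govern whether $\psi\in L^{r+1}(\R^N)$ and hence whether $\int_\Omega|V_\varepsilon|^{r+1}$ scales like $\varepsilon^{N(p-r)/(p+1)}$, picks up a logarithm, or is governed by the tail; the entire content of Steps~1.1--1.3 and the case analysis in Step~3 of Lemma~\ref{estimateFtv} is exactly this bookkeeping, and it is where the threshold $p=7/2$ (and, by the hyperbola's symmetry, $p=8$) is actually produced. Citing the number without running the case analysis, and starting from incorrect decay rates, leaves the central claim of the theorem unverified. The rest of your outline --- mountain-pass geometry with the scaling-invariant parameter $\lambda\mu^r$, a local Palais--Smale condition below $c^*$ by concentration--compactness, positivity by maximum principle --- is sound in broad strokes and parallels the paper's Propositions~\ref{prop-mpgeometry}, \ref{propcompacidade} and the sign argument at the end of the proof of Theorem~\ref{theo1}.
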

The precise condition on the size of $\lambda\mu^r$ (for the case with $rs=1$) is specified at \eqref{mulambda} and \eqref{lambdamu} ahead. Actually, such condition appeared before in \cite{EdJe-djairo} and corresponds to the hypothesis $\lambda<\lambda_1$ for equation \eqref{eq:BNlinear}. Moreover, as proved in \cite[Theorem 4.2]{van1992variational}, if such condition is not verified, then \eqref{sist} may have no positive solution in starshaped domains. Also observe that, in case of $N=3$, $(7/2, 8)$ and $(8,7/2)$ are symmetric points on the critical hyperbola \eqref{pqHC}.

We call the attention to the fact that, when $\lambda=\mu$, $r= s$, $p=q$, any solution of \eqref{sist} is such that $u=v$ (see \cite[Example 4.3]{Ederson-Gabrielle-Nicola}), which makes \eqref{sist} and \eqref{eq:BN} to be equivalent in this case. With this in mind, for $N=3$, the so-called critical dimension for \eqref{eq:BN}, we prove the existence of solutions for $(p,q)$ lying on some parts of the critical hyperbola (even if $r=s=1$), which brings new results when comparing to \cite[Theorem 2]{HMV}, where the case $N= 3$ is not considered. Indeed, when setting side by side our results with \cite[Theorem 2]{HMV}, our contribution is threefold: we treat the case $N=3$; for $N=4$ we do not impose $p\neq 2$ or $p \neq 5$; for $N\geq 3$ we consider the natural 1-homogenous ($rs =1$) or superlinear ($rs>1$) perturbations, while \cite[Theorem 2]{HMV} is restricted to the case with $r=s=1$, $p>1$ and $q>1$. In particular, for $N\geq 4$, we cover all the points $(p,q)$ on the critical hyperbola, which includes points with $p<1$ or $q<1$ for $N >4$. Figure \ref{fig1} ahead illustrates the existence result given by Theorem \ref{theo1} for $N\geq 4$.
 
 \vspace{-.1cm}

 \begin{figure}[h]
\begin{minipage}{8cm}
\begin{center}
     \includegraphics[scale=.9]{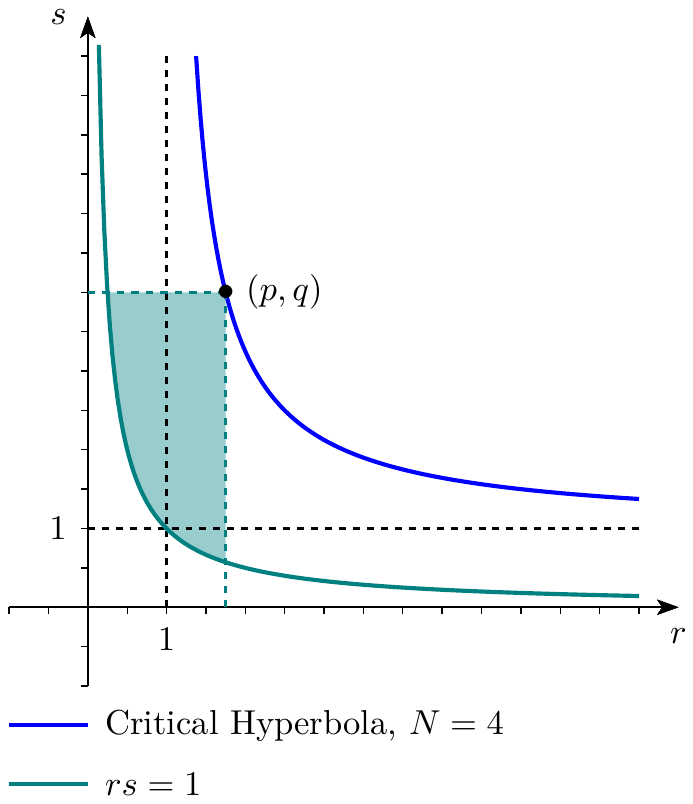}
          \end{center}
\end{minipage}
\begin{minipage}{8cm}
\begin{center}
     \includegraphics[scale=.9]{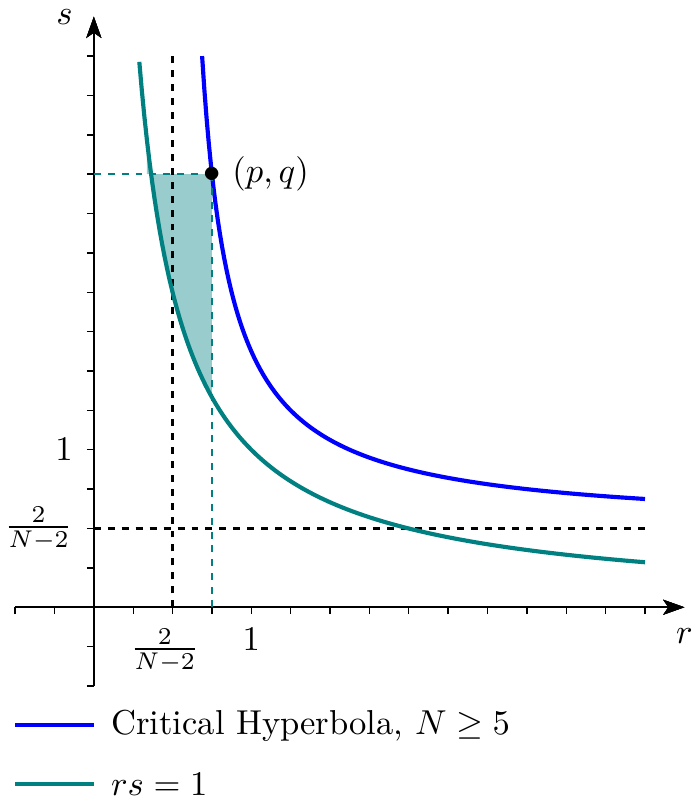}
     \end{center}
\end{minipage}
\caption{\small{Given any $(p,q)$ on the critical hyperbola, any $(r,s)$ satisfying \eqref{rscondition} is \\ admissible for finding a positive solution to \eqref{sist}.}} \label{fig1}
\end{figure}

 \vspace{-.1cm}
Our paper may also serve as motivation for future investigation. In view of the results in \cite[Theorem 1.1]{Pistoia-Kim}, that consider $r=s=1$, it could be interesting to study blowing up phenomena for system \eqref{sist}, with $rs =1$, as $\lambda = \mu \to 0$. 

We recall that in the critical dimension $N=3$  it is not possible to prove the existence of solution for \eqref{eq:BN} in the full range $1\leq t < 5$. Indeed, as in \cite[Corollary 2.3]{BrezisNirenberg1983}, such existence results is proved only for $3 < t < 5$. This motivates to introduce the following definition and open problems.

\begin{defi}\label{def:critical}
For $N=3$, let $(p,q)$ be a point on the critical hyperbola \eqref{pqHC}, $\Omega$ be a bounded regular domain, and $(r,s)$ satisfying \eqref{rscondition}. We say that $(p,q)$ is on a {\emph{Critical Region}} if, for some $\Omega$ and some $(r,s)$, \eqref{sist} has no positive solution for some $\lambda$ and $\mu$ small. On the other hand,  $(p,q)$ is on a {\emph{Noncritical Region}} if for all $\Omega$, all $(r,s)$ satisfying \eqref{rscondition} and $(\lambda, \mu)$ suitably small, then \eqref{sist} has a positive solution.
\end{defi}

\noindent \textbf{Open problems.} 
\begin{enumerate}
\item To find the critical region of the critical hyperbola \eqref{pqHC} for $N=3$. 
\item A simpler problem, but still challenging, is to find the optimal values $7/2 < p_* \leq  p^* < 8$ such that \eqref{sist} has no solution for any $p_* \leq p \leq p^*$ with $r=s=1$, $\lambda=\mu$ small, with $\Omega = B(0,1) \subset \mathbb{R}^3$.
\end{enumerate}

For this second question, due to the results in \cite[Theorem 1.2]{BrezisNirenberg1983} and conditions \eqref{eq:obsN3} ahead in this paper, we know that  $4 \leq p_* \leq 5 \leq p^* \leq 13/2$.  Figure \ref{fig2} illustrates the open problem regarding what should be critical and noncritical regions of the critical hyperbola for $N=3$.

\begin{figure}[h]
\centering
     \includegraphics[scale=.9]{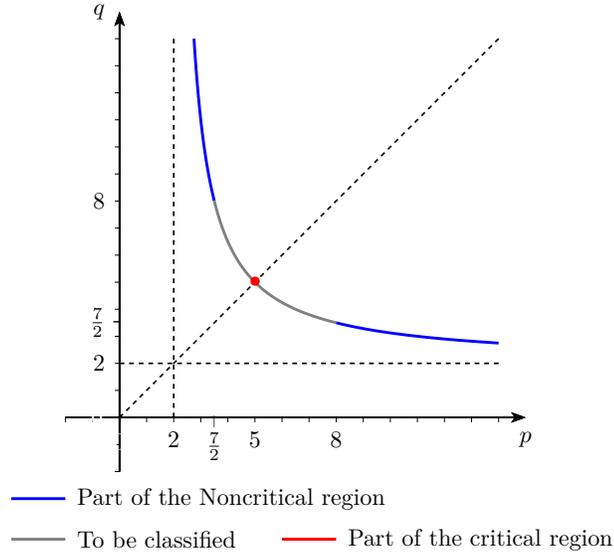}
     \caption{\small{Critical Hyperbola for $N=3$}}\label{fig2}
\end{figure}

Finally, we make a link between critical/noncritical regions of the critical hyperbola associated to Hamiltonian systems and the critical dimensions for the biharmonic operator under Navier boundary conditions. We recall that, according to \cite{van1995}, the dimensions $N=5,6,7$ are named as critical for the study of
\begin{equation}\label{biharmonic}
\Delta^2 u = \mu u + u^{\frac{N+4}{N-4}} \ \ \text{in} \ \ \Omega, \quad u = \Delta u = 0 \ \ \text{on} \ \ \partial \Omega,
\end{equation}
a particular case of \eqref{sist} with $\lambda=0$ and $p=1$; see also \cite{edmunds1990, Bernis_1995} for the case with Dirichlet boundary conditions for the biharmonic and polyharmonic operators, respectively. A first attempt to understand the phenomenon of critical dimension for Hamiltonian systems was presented in \cite{EdJe-djairo}. However, the  asymmetric perturbation in \cite{EdJe-djairo} makes the problem more like a nonlinear version of the biharmonic equation \eqref{biharmonic}, as the counterpart of the $p-$Laplacian version for \eqref{eq:BN}.  In the case with $\lambda>0$ and $\mu>0$ in \eqref{sist}, the natural symmetric perturbation of the critical Lane-Emden system, we recover that the only critical dimension is $N=3$, as it happens to the scalar problem \eqref{eq:BN}, unveiling  the notions of critical and noncritical regions of the critical hyperbola for $N=3$.

This paper is organized as follows. In Section \ref{sec:variational} we present the variational approach to treat the system, writing \eqref{sist} as the fourth order equations \eqref{prob} or \eqref{probp}. We define the energy functionals associated to these equations, show that they have the mountain pass geometry and present an upper bound for their mountain pass levels. Section \ref{sec:PS} is devoted to localize the range where such functionals satisfy the $(PS)_c$-condition and to the proof of Theorem \ref{theo1}. Finally, we accommodate in Section \ref{app} some technical estimates which are crucial for the variational treatment.

\section{Variational approach and Mountain Pass Geometry}\label{sec:variational}

To deal with \eqref{sist}, following the same approach as in \cite{EdJe-djairo}, define
\begin{equation}\label{fglambda}
\begin{array}{ccc}
 f_\lambda(t)=\lambda |t|^{r-1}t+|t|^{p-1}t, \ \ \overline{F}_\lambda(t)=\displaystyle \int_0^t f_\lambda^{-1}(t) dt,\\
  g_\mu(t)=\mu |t|^{s-1}t+|t|^{q-1}t, \ \ \overline{G}_\mu(t)=\displaystyle \int_0^t g_\mu^{-1}(t) dt,
\end{array}
\end{equation}
and rewrite \eqref{sist} as one of the fourth-order equations under Navier boundary conditions
\begin{equation}\tag{P}\label{prob}
\left\{
\begin{array}{lll}
\Delta(f_\lambda^{-1}(\Delta u)) = \mu |u|^{s-1}u+ |u|^{q-1}u \text{ in } \Omega,\\
u, \Delta u=0 \text{ on } \partial\Omega,
\end{array}\right.\end{equation}
\begin{equation}\tag{P'}\label{probp}
 \left\{
\begin{array}{lll}
\Delta(g_\mu^{-1}(\Delta v)) = \lambda |v|^{r-1}v+ |v|^{p-1}v \text{ in } \Omega,\\
v, \Delta v=0 \text{ on } \partial\Omega.
\end{array}\right.\end{equation}

Associated with \eqref{prob} and \eqref{probp}, we consider the $C^1(E_p,\R)$ and $C^1(E_q,\R)$ functionals
\begin{equation}\label{functional}
I_F(u)=\displaystyle \int_{\Omega} \overline{F}_\lambda(\Delta u)dx-\frac{\mu}{s+1} \displaystyle \int_{\Omega} |u|^{s+1}dx -\frac{1}{q+1} \displaystyle \int_{\Omega} |u|^{q+1}dx,
\end{equation}
\begin{equation}\label{auxfunctional}
I_G(u)=\displaystyle \int_{\Omega} \overline{G}_\mu(\Delta v)dx-\frac{\lambda}{r+1} \displaystyle \int_{\Omega} |v|^{r+1}dx -\frac{1}{p+1} \displaystyle \int_{\Omega} |v|^{p+1}dx,
\end{equation}
where $E_t:=W^{2,\frac{t+1}{t}}(\Omega)\cap W^{1,\frac{t+1}{t}}_0(\Omega)$ is endowed with the norm $\Vert u\Vert =|\Delta u|_\frac{t+1}{t}$. Throughout in this paper $|w|_{\theta}$ stand for the $L^{\theta}(\Omega)$-norm of $w$. 

The variational treatment of \eqref{sist} given by studying \eqref{prob} or \eqref{probp} is usually called reduction by inversion. This idea has been used by P. L. Lions \cite{lions} and in several other papers, as for example in \cite{ClementMitidieri, ClementFelmerMitidieri, Hulshof-VanderVorst, EdersonPortugaliae}. Here, since the functions $f_\lambda$ and $g_{\mu}$ are not pure power, and due to the critical growth nature of \eqref{sist}, we prove in Section \ref{sectiontec} some sharp estimates on $f_\lambda$, whose corresponding versions to $g_{\mu}$ also hold. In order to capture in this inversion the contribution of the term $\lambda |u|^{r-1}u$, to downsize the Mountain Pass level, we compute at Lemma \ref{estimateFtv} some integrals on rings involving the ground state solutions of the Lane-Emden critical system on $\R^N$, where terms associated to $u \mapsto \lambda |u|^{r-1}u$ are dominant.

\begin{defi}
We say that $u \in E_p$ is a weak solution of \eqref{prob} iff $I_F'(u)=0$. A function $u \in C^2(\overline{\Omega})$ such that $f_{\lambda}^{-1}(\Delta u) \in C^2(\Omega)$ is a classical solution of \eqref{prob} iff satisfies \eqref{prob} pointwise. Similarly, we define weak and classical solutions of \eqref{probp}. Moreover, $(u,v)$ is a classical solution of \eqref{sist}, iff $u, v \in C(\overline{\Omega}) \cap C^2(\Omega)$ satisfy \eqref{sist} pointwise.
\end{defi}
\begin{lem}\label{regularity Lemma}
If $u$ is a weak solution of \eqref{prob}, then it is a classical solution of \eqref{prob}. The converse is also true. Moreover, $u$ is a classical solution of \eqref{prob}, iff $(u,v)$ is a classical solution of \eqref{sist}, with $v = f_{\lambda}^{-1}(-\Delta u)$.
\end{lem}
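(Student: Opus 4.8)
The plan is to transport everything to the second-order system \eqref{sist}, where standard elliptic regularity applies, and then read the regularity back to \eqref{prob}. Let $u\in E_p$ be a weak solution of \eqref{prob}, i.e.\ $I_F'(u)=0$, and set $v:=f_\lambda^{-1}(-\Delta u)$. Since $f_\lambda$ is an odd increasing homeomorphism of $\R$, the sharp estimates of Section \ref{sectiontec} yield $|f_\lambda(\tau)|\le C(1+|\tau|^p)$ and $|f_\lambda^{-1}(\tau)|\le C(1+|\tau|^{1/p})$, together with two-sided bounds $c|\tau|^{1/p}\le|f_\lambda^{-1}(\tau)|\le C|\tau|^{1/p}$ for $|\tau|$ large; because $\Delta u\in L^{(p+1)/p}(\Omega)$ this gives $v\in L^{p+1}(\Omega)$. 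Testing $I_F'(u)=0$ against $\varphi\in E_p$ and identifying the natural boundary terms shows that $v\in E_q$, that $-\Delta u=f_\lambda(v)$ and $-\Delta v=g_\mu(u)=\mu|u|^{s-1}u+|u|^{q-1}u$ hold weakly, and that the Navier/Dirichlet conditions are satisfied in the trace sense; that is, $(u,v)$ is a weak solution of \eqref{sist} with $u\in L^{q+1}(\Omega)$ (via the critical embedding $E_p\hookrightarrow L^{q+1}(\Omega)$ coming from \eqref{pqHC}) and $v\in L^{p+1}(\Omega)$.

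Next I would bootstrap the integrability of $(u,v)$. Since $(p,q)$ lies on the critical hyperbola, a plain Sobolev iteration stalls, so the core of the argument is a Brezis--Kato type estimate carried out simultaneously for the two equations of \eqref{sist}: testing $-\Delta u=f_\lambda(v)$ and $-\Delta v=g_\mu(u)$ against truncated powers $|u|^{2\beta}u$ and $|v|^{2\beta}v$. The subcritical lower-order terms $\lambda|v|^{r-1}v$ and $\mu|u|^{s-1}u$ (here $r<p$, $s<q$ enter) only produce absorbable lower-order contributions, while the critical terms $|v|^{p-1}v$, $|u|^{q-1}u$ are handled as in the scalar Brezis--Nirenberg problem; the fact that $f_\lambda^{-1}$ is not a pure power is immaterial thanks to the two-sided power bounds above. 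The outcome is $u,v\in L^m(\Omega)$ for every $m<\infty$.

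From there I expect the argument to be routine. As $|f_\lambda(v)|\le C(1+|v|^p)$ and $|g_\mu(u)|\le C(1+|u|^q)$, we get $f_\lambda(v),\,g_\mu(u)\in\bigcap_{m<\infty}L^m(\Omega)$, so Calder\'on--Zygmund estimates and Sobolev embedding give $u,v\in C^{1,\alpha}(\overline\Omega)$ for all $\alpha\in(0,1)$. Then, using that $v$ is Lipschitz and $f_\lambda$ is locally Lipschitz on $\R\setminus\{0\}$ and globally $C^{0,\min\{1,r\}}$, one obtains $f_\lambda(v)\in C^{0,\min\{1,r\}}(\overline\Omega)$ and similarly $g_\mu(u)\in C^{0,\min\{1,s\}}(\overline\Omega)$, so Schauder theory on the smooth domain $\Omega$ gives $u,v\in C^{2,\gamma}(\overline\Omega)$ with $\gamma=\min\{1,r,s\}>0$. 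Hence $f_\lambda^{-1}(\Delta u)=-v\in C^2(\Omega)$, and $\Delta u=-f_\lambda(v)=0$ on $\partial\Omega$ because $v=0$ there; thus $u$ is a classical solution of \eqref{prob} and $(u,v)$ is a classical solution of \eqref{sist}. For the converse and the remaining equivalences I would argue directly: if $u$ is a classical solution of \eqref{prob}, then $u\in E_p$ (bounded smooth domain), $w:=f_\lambda^{-1}(\Delta u)\in C^2(\Omega)$ solves $\Delta w=g_\mu(u)$ with $w=0$ on $\partial\Omega$, so $w\in C^{1,\alpha}(\overline\Omega)\cap W^{2,m}(\Omega)$, and Green's second identity (valid for $\varphi\in E_p$ by approximation) gives $I_F'(u)\varphi=\int_{\partial\Omega}\big(w\,\partial_\nu\varphi-\varphi\,\partial_\nu w\big)\,d\sigma=0$ since $w=\varphi=0$ on $\partial\Omega$; setting $v:=-w=f_\lambda^{-1}(-\Delta u)$ and rearranging exhibits $(u,v)$ as a classical solution of \eqref{sist}. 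Conversely, a classical solution $(u,v)$ of \eqref{sist} satisfies $-\Delta u=f_\lambda(v)$, hence $v=f_\lambda^{-1}(-\Delta u)$ by the bijectivity of $f_\lambda$, so $f_\lambda^{-1}(\Delta u)=-v\in C^2(\Omega)$, $\Delta\big(f_\lambda^{-1}(\Delta u)\big)=-\Delta v=g_\mu(u)$ in $\Omega$, and $\Delta u=-f_\lambda(v)=0$ on $\partial\Omega$; a final bootstrap as above upgrades $u$ to $C^{2,\gamma}(\overline\Omega)$, so $u$ solves \eqref{prob} classically.

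I expect the main obstacle to be the bootstrap in the second step: at the critical hyperbola one cannot merely iterate Sobolev, so the Brezis--Kato truncation has to be run jointly on the two equations of \eqref{sist} while keeping the non-power inverse nonlinearity $f_\lambda^{-1}$ under control through the estimates of Section \ref{sectiontec}. The remaining technical points --- Green's identity for $E_p$-functions, the membership $v\in E_q$, and the reduced Hölder exponent when $r<1$ --- are minor and do not affect the conclusion.
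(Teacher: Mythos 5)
Your proposal is correct and essentially reconstructs the argument the paper delegates to the cited works: the proof of Lemma~\ref{regularity Lemma} in the paper is a one-line reference to \cite[Lemma 1]{EdJe-djairo}, which in turn relies on \cite[Section 4]{ederson-JMAA} and \cite[Section 3]{hulshofvandervorst-93}, and those arguments proceed exactly along the lines you sketch --- transport to the system \eqref{sist}, a Brezis--Kato bootstrap run jointly on the two equations to handle the critical growth, Calder\'on--Zygmund plus Schauder theory (with the reduced H\"older exponent when $r<1$ or $s<1$), and Green's identity for the converse. So this is the same route as the paper's proof, only written out.
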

\begin{proof}
We can mimic the proof of  \cite[Lemma 1]{EdJe-djairo}, which is based on the arguments in \cite[Section 4]{ederson-JMAA} and \cite[Section 3]{hulshofvandervorst-93}.
\end{proof}

Next we show that the functionals $I_F$ and $I_G$ have the Mountain Pass geometry and obtain upper bounds for their Mountain Pass levels. For the cases with $rs=1$ we introduce the conditions
\begin{equation}\label{mulambda}
\displaystyle\lambda^{1/r}\mu\leq  \frac{(2|\Omega|)^{\frac{r-p}{r(p+1)}}}{2^\frac{r+1}{r}}\mathcal{C}^{\frac{r+1}{r}}_{r,\Omega},
\end{equation} 
\begin{equation}\label{lambdamu}
\displaystyle\lambda\mu^{1/s}\leq  \frac{(2|\Omega|)^{\frac{s-q}{s(q+1)}}}{2^\frac{s+1}{s}}\mathcal{C}^{\frac{s+1}{s}}_{s,\Omega},
\end{equation}
on the size of $(\lambda, \mu)$, where
\begin{equation}
\mathcal{C}_{r,\Omega} = \inf\{\Vert u\Vert ;\ u\in E_p \text{ and } |u|_\frac{r+1}{r}=1\}, \ \ \mathcal{C}_{s,\Omega} = \inf\{\Vert v\Vert ;\ v\in E_q \text{ and } |v|_\frac{s+1}{s}=1\}.
\end{equation}

\begin{obs}
Conditions \eqref{mulambda} and \eqref{lambdamu} for the case with $rs=1$ are natural and correspond to the hypothesis on $\lambda$ and $\mu$ in \cite[Theorem 2]{HMV} to treat \eqref{sist} with $r= s=1$, and to the hypothesis $\lambda < \lambda_1$ in \cite{BrezisNirenberg1983} to study \eqref{eq:BNlinear}.
\end{obs}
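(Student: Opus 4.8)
The statement is a matching exercise: the smallness conditions \eqref{mulambda}--\eqref{lambdamu} should collapse, in two nested special cases, onto hypotheses already in the literature. A first useful observation is that under $rs=1$ (so $s=1/r$) both conditions control a single quantity. Indeed, raising \eqref{mulambda} to the power $r$ gives
\[
\lambda\mu^{r}\le (2|\Omega|)^{\frac{r-p}{p+1}}\,2^{-(r+1)}\,\mathcal C_{r,\Omega}^{\,r+1},
\]
while \eqref{lambdamu} (recall $\lambda\mu^{1/s}=\lambda\mu^{r}$) is the same inequality with $(p,r,\mathcal C_{r,\Omega})$ replaced by $(q,s,\mathcal C_{s,\Omega})$. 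Thus \eqref{mulambda} and \eqref{lambdamu} are just the two upper bounds on the scaling-invariant combination $\lambda\mu^{r}$ produced by treating \eqref{sist} through \eqref{prob} or through \eqref{probp}, mirroring the symmetry of the system; so it suffices to discuss \eqref{mulambda}.

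\emph{The case $r=s=1$ and \cite[Theorem 2]{HMV}.} Here \eqref{rscondition} forces $p>1$, $q>1$, and \eqref{mulambda} reads $\lambda\mu\le \tfrac14(2|\Omega|)^{\frac{1-p}{p+1}}\,\mathcal C_{1,\Omega}^{2}$, where $\mathcal C_{1,\Omega}$ is $\mathcal C_{r,\Omega}$ at $r=1$, i.e.\ $\mathcal C_{1,\Omega}=\inf\{|\Delta u|_{(p+1)/p}:u\in E_p,\ |u|_2=1\}$. I would then identify this as exactly the smallness hypothesis produced by the reduction by inversion, which is the one used in \cite[Theorem 2]{HMV}: since $p>1$ one has $\overline F_\lambda(t)=\tfrac1{2\lambda}t^{2}+o(t^{2})$ near $0$ and, because $f_\lambda^{-1}$ is concave with $f_\lambda^{-1}(0)=0$, the global bound $\overline F_\lambda(t)\ge\tfrac12\,t\,f_\lambda^{-1}(t)$; combining such a lower bound for $\overline F_\lambda$ with the Poincar\'e-type inequality $|u|_2\le\mathcal C_{1,\Omega}^{-1}|\Delta u|_{(p+1)/p}$ and H\"older on the bounded set $\Omega$ (which is precisely where the interpolation factor $(2|\Omega|)^{\frac{1-p}{p+1}}$ between $L^{(p+1)/p}$ and $L^{2}$ enters) shows that \eqref{mulambda} guarantees $\int_\Omega\overline F_\lambda(\Delta u)\,dx-\tfrac{\mu}{2}\int_\Omega u^{2}\,dx\ge0$ on $E_p$, i.e.\ that $I_F$ keeps its mountain-pass geometry near the origin. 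This is the same constraint, in the same form, as in \cite[Theorem 2]{HMV}, and in particular it implies the natural ``below the spectral curve'' condition $\lambda\mu<\lambda_1^{2}$ for the linearised system $-\Delta u=\lambda v$, $-\Delta v=\mu u$, whose first eigenfunction is positive. The converse direction — that it cannot be dropped — is supplied by \cite[Theorem 4.2]{van1992variational}: if it fails, \eqref{sist} may admit no positive solution in starshaped domains, exactly as $\lambda<\lambda_1$ is needed in \cite{BrezisNirenberg1983}.

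\emph{The scalar reduction and \cite{BrezisNirenberg1983}.} Adding $\lambda=\mu$ and $p=q$ (hence $p=q=2^{*}-1$ on \eqref{pqHC}), every solution of \eqref{sist} satisfies $u=v$ by \cite[Example 4.3]{Ederson-Gabrielle-Nicola}, so \eqref{sist} is equivalent to \eqref{eq:BNlinear}, and the condition above becomes one of the form $\lambda\le C(\Omega)$ that plays exactly the role of the Brezis--Nirenberg hypothesis $\lambda<\lambda_1$ (sharp there, again by Pohozaev in starshaped domains); the fact that \eqref{mulambda} itself and this correspondence already appear in \cite{EdJe-djairo} is what I would cite for the computational details instead of redoing them. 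The one point to be careful about — and the only real obstacle — is that one must \emph{not} assert that $\tfrac14(2|\Omega|)^{\frac{1-p}{p+1}}\mathcal C_{1,\Omega}^{2}$ equals the sharp spectral value $\lambda_1^{2}$: testing $\mathcal C_{1,\Omega}$ with the first Dirichlet eigenfunction already gives $\mathcal C_{1,\Omega}\le\lambda_1|\Omega|^{1/N}$, so the right-hand side of \eqref{mulambda} is strictly below $\lambda_1^{2}$. Hence ``correspond to'' in the statement must be read as ``reduces to a condition of the same form and plays the same role as'', not ``is equivalent to'' — the discrepancy being the usual gap between a variational sufficient condition obtained by inversion and the optimal threshold, which is harmless because Theorem \ref{theo1} only asserts existence for $\lambda\mu^{r}$ suitably small.
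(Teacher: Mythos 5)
The paper states this as an unproved \emph{Remark}: it is a contextualizing observation whose verification is deferred to the cited references (\cite{HMV}, \cite{BrezisNirenberg1983}, \cite{EdJe-djairo}), so there is no in-paper argument to compare against. On its own merits your sketch is sound and captures what the authors evidently have in mind. In particular, your final paragraph correctly pins down the sense in which \eqref{mulambda} ``corresponds'' to the classical hypotheses: raising \eqref{mulambda} to the $r$-th power shows both \eqref{mulambda} and \eqref{lambdamu} control the scaling-invariant quantity $\lambda\mu^{r}$, and the variational constant $\tfrac14(2|\Omega|)^{\frac{1-p}{p+1}}\mathcal C_{1,\Omega}^{2}$ produced by the inversion is \emph{strictly} below the sharp spectral threshold $\lambda_1^{2}$ (resp.\ $\lambda_1$ after the scalar reduction $p=q$, $\lambda=\mu$, $u=v$). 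So ``correspond to'' must indeed be read as ``same form, same role, not the same number'', and you are right to flag this rather than claim equivalence.

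Two minor remarks on your write-up. First, the detour through concavity of $f_\lambda^{-1}$ to obtain $\overline F_\lambda(t)\ge\tfrac12\,t\,f_\lambda^{-1}(t)$ is unnecessary: this is exactly Corollary \ref{corFs} with $s=1$, and the paper derives it algebraically from Lemma \ref{Ffeq2} and \eqref{rscondition} with no appeal to convexity of $f_\lambda$; the concavity route also silently uses $r\ge 1$, which you do have here but is worth noting. Second, the estimate $\mathcal C_{1,\Omega}\le \lambda_1|\Omega|^{1/N}$ obtained by testing with the first Dirichlet eigenfunction is specific to the diagonal case $p=q=(N+2)/(N-2)$; for general $(p,q)$ on \eqref{pqHC} the H\"older exponent is $(p-1)/(2(p+1))$, which coincides with $1/N$ only when $p=q$. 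You use the bound only in the scalar-reduction paragraph, so the conclusion stands, but the restriction is applied implicitly and deserves a word.
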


\begin{prop}\label{prop-mpgeometry}
Let $(p,q)$ and $(r,s)$ be as in \eqref{pqHC} and \eqref{rscondition}.  
\begin{enumerate}[-]
\item Then $I_F$ has the Mountain Pass geometry with a local minimum at zero, under the additional condition \eqref{mulambda} when $rs=1$.
\item Then $I_G$ has the Mountain Pass geometry with a local minimum at zero, under the additional condition \eqref{lambdamu} when $rs=1$.
\end{enumerate}
\end{prop}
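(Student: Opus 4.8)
The plan is to reduce everything to $I_F$: the functional $I_G$ is $I_F$ with the roles of $(\lambda,r,p)$ and $(\mu,s,q)$ interchanged and $E_p$ replaced by $E_q$, and conditions \eqref{rscondition}, \eqref{pqHC} together with the pair \eqref{mulambda}--\eqref{lambdamu} are invariant under this interchange, so the conclusion for $I_G$ follows word for word once it holds for $I_F$. For $I_F$ I will check that $I_F(0)=0$; that there are $\rho,\alpha>0$ with $I_F(u)\ge\alpha$ whenever $\|u\|=\rho$ and $I_F(u)>0$ for $0<\|u\|\le\rho$ (strict local minimum at the origin); and that there is $e\in E_p$ with $\|e\|>\rho$ and $I_F(e)<0$ — the last two being the Mountain Pass geometry with a local minimum at $0$. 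The main tool is the identity (integration by parts, together with the fact that $\overline{F}_\lambda$ is even)
\[
\overline{F}_\lambda(t)=\tfrac{\lambda r}{r+1}\,|f_\lambda^{-1}(t)|^{r+1}+\tfrac{p}{p+1}\,|f_\lambda^{-1}(t)|^{p+1},
\]
combined with the elementary bounds $\min\{(|t|/2\lambda)^{1/r},(|t|/2)^{1/p}\}\le|f_\lambda^{-1}(t)|\le\min\{(|t|/\lambda)^{1/r},|t|^{1/p}\}$, which come from $\max\{\lambda w^{r},w^{p}\}\le f_\lambda(w)=\lambda w^{r}+w^{p}\le 2\max\{\lambda w^{r},w^{p}\}$ and are made quantitative in Section~\ref{sectiontec}. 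These give a threshold $t_0=t_0(\lambda,p,r)$ with $\overline{F}_\lambda(t)\ge \tfrac{r}{r+1}2^{-\frac{r+1}{r}}\lambda^{-1/r}|t|^{\frac{r+1}{r}}$ for $|t|\le t_0$, $\overline{F}_\lambda(t)\ge \tfrac{p}{p+1}2^{-\frac{p+1}{p}}|t|^{\frac{p+1}{p}}$ for $|t|\ge t_0$, and the global upper bound $\overline{F}_\lambda(t)\le\tfrac{\lambda r}{r+1}|t|^{\frac{r+1}{p}}+\tfrac{p}{p+1}|t|^{\frac{p+1}{p}}$. Note $\tfrac{r+1}{r}>\tfrac{p+1}{p}$ (because $r<p$), and $\tfrac{r+1}{r}=s+1\iff rs=1$, while $\tfrac{r+1}{r}<s+1$ if $rs>1$.

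For the barrier near $0$, fix $u\in E_p$, split $\Omega$ along $A=\{|\Delta u|\le t_0\}$, and add the two lower bounds on $A$ and $\Omega\setminus A$:
\[
\int_\Omega\overline{F}_\lambda(\Delta u)\,dx\ \ge\ \tfrac{r}{r+1}2^{-\frac{r+1}{r}}\lambda^{-1/r}\!\!\int_A|\Delta u|^{\frac{r+1}{r}}\,dx+\tfrac{p}{p+1}2^{-\frac{p+1}{p}}\!\!\int_{\Omega\setminus A}|\Delta u|^{\frac{p+1}{p}}\,dx.
\]
Since $\tfrac{r+1}{r}>\tfrac{p+1}{p}$, Hölder's inequality gives $\int_A|\Delta u|^{\frac{p+1}{p}}\,dx\le|\Omega|^{1-\sigma}\big(\int_A|\Delta u|^{\frac{r+1}{r}}\,dx\big)^{\sigma}$ with $\sigma=\tfrac{r(p+1)}{p(r+1)}\in(0,1)$, so $\int_A|\Delta u|^{\frac{r+1}{r}}\,dx\ge|\Omega|^{1-1/\sigma}\big(\int_A|\Delta u|^{\frac{p+1}{p}}\,dx\big)^{1/\sigma}$ with $1-\tfrac1\sigma=\tfrac{r-p}{r(p+1)}$. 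Feeding this in, using $\|u\|^{\frac{p+1}{p}}=\int_A|\Delta u|^{\frac{p+1}{p}}\,dx+\int_{\Omega\setminus A}|\Delta u|^{\frac{p+1}{p}}\,dx$, and minimizing over how this mass distributes between $A$ and $\Omega\setminus A$ (for $\|u\|$ small the minimum occurs when the mass sits on $A$), one obtains
\[
\int_\Omega\overline{F}_\lambda(\Delta u)\,dx\ \ge\ \frac{r}{r+1}\,\frac{|\Omega|^{\frac{r-p}{r(p+1)}}}{2^{\frac{r+1}{r}}\,\lambda^{1/r}}\,\|u\|^{\frac{r+1}{r}},\qquad \|u\|\ \text{small}.
\]
Combining with the critical embedding $E_p\hookrightarrow L^{q+1}(\Omega)$ — which holds precisely because of \eqref{pqHC} — and with $E_p\hookrightarrow L^{s+1}(\Omega)\cap L^{\frac{r+1}{r}}(\Omega)$ (valid since $s<q$ gives $s+1<q+1$, and $rs\ge1$ with $s<q$ gives $qr>1$, hence $\tfrac{r+1}{r}<q+1$) yields, on a small ball,
\[
I_F(u)\ \ge\ \frac{r}{r+1}\,\frac{|\Omega|^{\frac{r-p}{r(p+1)}}}{2^{\frac{r+1}{r}}\,\lambda^{1/r}}\,\|u\|^{\frac{r+1}{r}}-\frac{\mu}{s+1}\,C_1\,\|u\|^{s+1}-\frac{S_{p,q}^{\,q+1}}{q+1}\,\|u\|^{q+1},
\]
where $C_1=\mathcal C_{r,\Omega}^{-(s+1)}$ when $rs=1$. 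If $rs>1$, then $\tfrac{r+1}{r}<s+1<q+1$ and the first term dominates for $\|u\|$ small, for any $\lambda,\mu>0$. If $rs=1$, then $\tfrac{r+1}{r}=s+1<q+1$; since the right-hand side of \eqref{mulambda} contains the factor $(2|\Omega|)^{\frac{r-p}{r(p+1)}}<|\Omega|^{\frac{r-p}{r(p+1)}}$, condition \eqref{mulambda} forces $\tfrac{\mu}{s+1}\mathcal C_{r,\Omega}^{-(s+1)}<\tfrac{r}{r+1}|\Omega|^{\frac{r-p}{r(p+1)}}2^{-\frac{r+1}{r}}\lambda^{-1/r}$, i.e. the coefficient of $\|u\|^{s+1}$ above is strictly positive. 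In both cases $I_F\ge\alpha>0$ on a small sphere and $I_F(u)>0=I_F(0)$ for $0<\|u\|\le\rho$.

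For the other end of the geometry, fix $u_0\in E_p\setminus\{0\}$; the global upper bound for $\overline{F}_\lambda$ yields, for $t>0$,
\[
I_F(tu_0)\ \le\ A_1\,t^{\frac{p+1}{p}}+A_2\,t^{\frac{r+1}{p}}-A_3\,t^{q+1}-A_4\,t^{s+1},\qquad A_1,A_2,A_3,A_4>0.
\]
On the critical hyperbola one has $pq>1$, since $pq-1=(p+1)(q+1)\big(1-\tfrac{1}{p+1}-\tfrac{1}{q+1}\big)=(p+1)(q+1)\cdot\tfrac{2}{N}>0$; hence $q+1>\tfrac{p+1}{p}>\tfrac{r+1}{p}$ and $q+1>s+1$, so the $-A_3\,t^{q+1}$ term dominates and $I_F(tu_0)\to-\infty$ as $t\to\infty$. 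Choosing $t^{*}$ with $\|t^{*}u_0\|>\rho$ and $I_F(t^{*}u_0)<0$ and setting $e:=t^{*}u_0$ finishes the verification for $I_F$, hence for $I_G$ by the symmetry noted above. The main obstacle is the first displayed lower bound for $\int_\Omega\overline{F}_\lambda(\Delta u)\,dx$: because $\overline{F}_\lambda$ grows like $|t|^{\frac{r+1}{r}}$ near $0$ but only like $|t|^{\frac{p+1}{p}}$ at infinity, and because $\Delta u$ can be large on a set of small measure even when $\|u\|_{E_p}$ is small, one cannot simply insert the near-zero pointwise bound into the energy; the additive split followed by the Hölder step repairs this, and tracking the resulting constant precisely — so that it can be compared with the explicit thresholds \eqref{mulambda} and \eqref{lambdamu} in the borderline case $rs=1$, which is the analogue here of the hypothesis $\lambda<\lambda_1$ for \eqref{eq:BNlinear} — is where the sharp estimates on $f_\lambda$ from Section~\ref{sectiontec} are needed. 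Everywhere else (the range $rs>1$, and the mountain direction) the geometry is robust and requires no smallness of $\lambda$ or $\mu$.
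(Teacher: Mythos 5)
Your proposal is correct and takes essentially the same route as the paper's proof: the identity $\overline{F}_\lambda(t)=\tfrac{\lambda r}{r+1}|f_\lambda^{-1}(t)|^{r+1}+\tfrac{p}{p+1}|f_\lambda^{-1}(t)|^{p+1}$ with the two-regime lower bounds on $f_\lambda^{-1}$, the splitting of $\Omega$ at the threshold, the Jensen/H\"older interpolation producing the $\|u\|^{\frac{r+1}{r}}$ barrier on a small ball, the dichotomy $rs>1$ versus $rs=1$ handled through $\mathcal{C}_{r,\Omega}$ and \eqref{mulambda}, the upper bound $\overline{F}_\lambda(t)\le\tfrac{p}{p+1}|t|^{\frac{p+1}{p}}+\tfrac{\lambda r}{r+1}|t|^{\frac{r+1}{p}}$ for the unbounded direction, and the symmetry reduction for $I_G$. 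The only (harmless) deviation is that you minimize over how the mass of $\|u\|^{\frac{p+1}{p}}$ splits between $\{|\Delta u|\le t_0\}$ and its complement instead of invoking $(a+b)^\alpha\le 2^{\alpha-1}(a^\alpha+b^\alpha)$, which gives the slightly better constant $|\Omega|^{\frac{r-p}{r(p+1)}}$ in place of $(2|\Omega|)^{\frac{r-p}{r(p+1)}}$ and still dovetails with \eqref{mulambda} in the borderline case $rs=1$.
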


\begin{proof}
Observe that $I_F(0) = 0$ and, from \eqref{ineq1'},
\begin{equation}
\begin{array}{ccc}
I_F(u)\leq\displaystyle \frac{p}{p+1} \Vert u\Vert ^{\frac{p+1}{p}}-\frac{\mu}{s+1} |u|_{s+1}^{s+1} -\frac{1}{q+1} |u|_{q+1}^{q+1}, \ \ \forall \, u \in E_p.
\end{array}
\end{equation}
Then, $I_F(tu)\rightarrow -\infty$ when $t\rightarrow \infty$ and $u \neq 0$.

On the other hand, by Lemma \ref{ineqFtrp},
\begin{multline}
I_F(u) = \displaystyle \int_{|\Delta u|\leq2\lambda^\frac{p}{p-r}}    \overline{F}_\lambda(\Delta u)dx+\displaystyle \int_{|\Delta u|>2\lambda^\frac{p}{p-r}} \overline{F}_\lambda(\Delta u)dx-\frac{\mu}{s+1} |u|_{s+1}^{s+1} -\frac{1}{q+1} |u|_{q+1}^{q+1}\vspace{5pt} \\
 \geq \displaystyle \frac{1}{2^\frac{r+1}{r}\lambda^{1/r}}\frac{r}{r + 1}  \int_{|\Delta u|\leq2\lambda^\frac{p}{p-r}}   |\Delta u|^\frac{r+1}{r} dx+ \frac{1}{2^\frac{p+1}{p}}\frac{p}{p + 1} \displaystyle \int_{|\Delta u|>2\lambda^\frac{p}{p-r}}   |\Delta u|^\frac{p+1}{p}dx \vspace{5pt}\\
  \quad -\frac{\mu}{s + 1} |u|_{s+1}^{s+1} -\frac{1}{q + 1} |u|_{q+1}^{q+1}.
\end{multline}
By Jensen's inequality, for a nonnegative measurable function $a$ and $\alpha>1$,
\begin{equation}\label{jensen}
\int_\omega (a(t))^\alpha dt\geq |\omega|^{1-\alpha}\left(\int_\omega a(t) dt\right)^\alpha.
\end{equation}
Since $0<r<p$, with $\alpha =\frac{r+1}{r} \frac{p}{p+1}>1$, it follows that
\begin{multline}
I_F(u)  \geq  \displaystyle \frac{(meas(|\Delta u|\leq2\lambda^\frac{p}{p-r}))^{1-\alpha}}{2^\frac{r+1}{r}\lambda^{1/r}}\frac{r}{r+1} \left( \int_{|\Delta u|\leq2\lambda^\frac{p}{p-r}} |\Delta u|^\frac{p+1}{p} dx\right)^\alpha \\
+  \displaystyle  \frac{1}{2^\frac{p+1}{p}}\frac{p}{p + 1} \int_{|\Delta u|>2\lambda^\frac{p}{p-r}} |\Delta u|^\frac{p+1}{p}dx \displaystyle -\frac{\mu}{s+1} |u|_{s+1}^{s+1} -\frac{1}{q+1} |u|_{q+1}^{q+1} \\
\geq \displaystyle \frac{|\Omega|^{\frac{r-p}{r(p+1)}}}{2^\frac{r+1}{r}\lambda^{1/r}}\frac{r}{r+1} \left( \int_{|\Delta u|\leq2\lambda^\frac{p}{p-r}} |\Delta u|^\frac{p+1}{p} dx\right)^\alpha\\
+ \displaystyle  \frac{1}{2^\frac{p+1}{p}}\frac{p}{p + 1} \int_{|\Delta u|>2\lambda^\frac{p}{p-r}} |\Delta u|^\frac{p+1}{p}dx \displaystyle -\frac{\mu}{s+1} |u|_{s+1}^{s+1} -\frac{1}{q+1} |u|_{q+1}^{q+1}.
\end{multline}
For $u\neq0$ such that
\begin{equation}
\displaystyle  \frac{1}{2^\frac{p+1}{p}}\frac{p}{p + 1} \Vert u\Vert ^{-\frac{p-r}{rp}}\geq \frac{|\Omega|^{\frac{r-p}{r(p+1)}}}{2^\frac{r+1}{r}\lambda^{1/r}}\frac{r}{r+1} \ \text{ i.e.} \ \ \
\Vert u\Vert  \leq \left(\frac{p(r+1)}{(p+1)r}\right)^\frac{pr}{p-r}2|\Omega|\lambda^\frac{p}{p-r},
\end{equation}
it follows that
\begin{multline}
I_F(u)\geq\displaystyle\frac{|\Omega|^{\frac{r-p}{r(p+1)}}}{2^\frac{r+1}{r}\lambda^{1/r}}\frac{r}{r+1}\left[ \left( \int_{|\Delta u|\leq2\lambda^\frac{p}{p-r}} |\Delta u|^\frac{p+1}{p} dx\right)^\alpha + \left( \int_{|\Delta u|>2\lambda^\frac{p}{p-r}} |\Delta u|^\frac{p+1}{p} dx\right)^\alpha\right]\\ \displaystyle-\frac{\mu}{s+1} |u|_{s+1}^{s+1} -\frac{1}{q+1} |u|_{q+1}^{q+1}.
\end{multline}
Since $1-\alpha=-\frac{p-r}{r(p+1)}=-\frac{p}{p+1}\frac{p-r}{pr}$ and $(a+b)^\alpha\leq 2^{\alpha-1}(a^\alpha + b^\alpha)$ for $a,b \geq 0$, we infer that
\begin{equation}
I_F(u)\geq\displaystyle \frac{(2|\Omega|)^{\frac{r-p}{r(p+1)}}}{2^\frac{r+1}{r}\lambda^{1/r}}\frac{r}{r+1} \Vert u \Vert^{\frac{r+1}{r}}-\frac{\mu}{s+1} |u|_{s+1}^{s+1} -\frac{1}{q+1} |u|_{q+1}^{q+1}
\end{equation}
for all $u\in E$ such that $\Vert u\Vert \leq  \left(\frac{p(r+1)}{(p+1)r}\right)^\frac{pr}{p-r}2|\Omega|\lambda^\frac{p}{p-r}$. 

If $\frac{r+1}{r}< s+1$, i.e. $ \frac{1}{r}<s$, and since $s<q$, it follows that $I_F$ has the Mountain Pass geometry with a local minimum at zero.\\

On the other hand, if $\frac{r+1}{r}= s+1$, i.e. $ \frac{1}{r}=s$, for $u\neq 0$, we infer that
\[
\begin{array}{rcl}
(s+1)I_F(u) &\geq& \frac{(2|\Omega|)^{\frac{r-p}{r(p+1)}}}{2^\frac{r+1}{r}\lambda^{1/r}}\Vert u\Vert ^{\frac{r+1}{r}}-\mu   |u|^{\frac{r+1}{r}}_{\frac{r+1}{r}} -\frac{s+1}{q+1} |u|_{q+1}^{q+1} \vspace{10pt} \\
& = &\left( \frac{(2|\Omega|)^{\frac{r-p}{r(p+1)}}}{2^\frac{r+1}{r}\lambda^{1/r}} - \mu  |u|^{\frac{r+1}{r}}_{\frac{r+1}{r}} \Vert u\Vert ^{-\frac{r+1}{r}} \right)\Vert u\Vert ^{\frac{r+1}{r}}-\frac{s+1}{q+1} |u|_{q+1}^{q+1}\vspace{10pt} \\
& \geq & \left( \frac{(2|\Omega|)^{\frac{r-p}{r(p+1)}}}{2^\frac{r+1}{r}\lambda^{1/r}} - \mu \frac{1}{\mathcal{C}_{r,\Omega}^{\frac{r+1}{r}}} \right)\Vert u\Vert ^{\frac{r+1}{r}}-\frac{s+1}{q+1} |u|_{q+1}^{q+1}
\end{array}
\]
and \eqref{mulambda} gives $\frac{(2|\Omega|)^{\frac{r-p}{r(p+1)}}}{2^\frac{r+1}{r}\lambda^{1/r}} - \mu \frac{1}{\mathcal{C}_{r,\Omega}^{\frac{r+1}{r}}}>0$, and again $I_F$ has the Mountain Pass geometry around zero, since $\frac{1}{r} = s < q$.
\end{proof}

Let $S$ be the Sobolev constant for the embedding $E_p\hookrightarrow L^{q+1}(\Omega)$, namely
\[
S = \inf_{u\in E_p, |u|_{q+1} = 1} \|u\|.
\]

\begin{prop}\label{prop-mplevel}
Suppose \eqref{pqHC} and \eqref{rscondition}, $\mu>0, \ \lambda>0$ and in the case $rs=1$ also assume \eqref{mulambda}. If $N\geq 4$ and $p\leq (N+2)/(N-2)$, or $N=3$ and $p\leq7/2$, then the mountain pass level $c_F$ of the functional $I_F$ is such that $c_F \in (0,\frac{2}{N} S^{\frac{pN}{2(p+1)}})$.
\end{prop}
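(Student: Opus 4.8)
The plan is to show that the mountain pass level $c_F$ lies strictly below the threshold $\frac{2}{N}S^{pN/2(p+1)}$ by testing the functional along a suitable rescaled family of truncated extremal functions for the Sobolev embedding $E_p \hookrightarrow L^{q+1}(\Omega)$, much in the spirit of Brezis--Nirenberg but adapted to the fourth-order ``reduced by inversion'' formulation. First I would recall that, since $p \le (N+2)/(N-2)$ (i.e. $(p,q)$ lies on the critical hyperbola with $p$ in this range), the Sobolev quotient $S$ for $E_p \hookrightarrow L^{q+1}(\Omega)$ is attained on $\R^N$ by a known ground state (essentially $U_{\varepsilon}$, a rescaling of the Lane--Emden extremal), and that $\frac{2}{N}S^{pN/2(p+1)}$ is exactly the ``critical energy level'' one gets from the pure-power critical functional (the one with $\lambda=\mu=0$ and $\overline F_\lambda$ replaced by the pure power term). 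The positivity $c_F > 0$ is immediate from Proposition \ref{prop-mpgeometry}: the mountain pass geometry established there (with the additional hypothesis \eqref{mulambda} when $rs=1$) forces $c_F \ge \rho > 0$ for the radius $\rho$ coming from the local minimum at zero.

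Next I would fix a point $x_0 \in \Omega$, a cutoff $\varphi \in \ccrn$ supported near $x_0$ with $\varphi \equiv 1$ on a neighborhood of $x_0$, and set $u_\varepsilon = \varphi U_\varepsilon$ where $U_\varepsilon$ is the (rescaled) extremal; then I would estimate $\sup_{t \ge 0} I_F(t u_\varepsilon)$. The key point is that along $t u_\varepsilon$ one has $I_F(t u_\varepsilon) \le \frac{t^{(p+1)/p}}{\,}\big(\text{leading Sobolev term}\big) - (\text{positive } L^{q+1}\text{ and } L^{s+1}\text{ terms})$, so the supremum in $t$ is achieved at some $t_\varepsilon$ bounded above and below, and evaluating there yields
\[
c_F \le \sup_{t\ge 0} I_F(t u_\varepsilon) \le \frac{2}{N}S^{\frac{pN}{2(p+1)}} + O(\text{error from cutoff}) - (\text{gain from the }\lambda\text{ term}).
\]
The standard Brezis--Nirenberg bookkeeping gives the error from the cutoff; the crucial extra ingredient here is that the perturbation $\lambda|v|^{r-1}v$, after inversion, produces a strictly negative contribution of a definite order in $\varepsilon$ that dominates the cutoff error in precisely the dimension/exponent ranges claimed ($N\ge 4$ and $p\le (N+2)/(N-2)$, or $N=3$ and $p \le 7/2$). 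This is exactly where Lemma \ref{estimateFtv} and the sharp estimates on $f_\lambda^{-1}$ from Section \ref{sectiontec} enter: one substitutes the ring estimates for $\int \overline F_\lambda(\Delta u_\varepsilon)$ to extract a term behaving like $-c\,\lambda\,\varepsilon^{\gamma}$ (plus a harmless term from the $\mu|u|^{s-1}u$ piece, which only helps), and one checks that $\gamma$ beats the $O(\varepsilon^{N-2})$ (or the logarithmic term when $N=4$) coming from the truncation. Comparing the exponent $\gamma$ coming from the inversion of $f_\lambda$ against $N-2$ is what pins down the admissible range of $p$, and in particular explains the cutoffs $p \le 7/2$ versus $p\ge 8$ when $N=3$.

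The main obstacle, and the technical heart of the argument, is controlling $\int_\Omega \overline F_\lambda(\Delta u_\varepsilon)\,dx$ with enough precision: because $f_\lambda$ is \emph{not} a pure power, $\overline F_\lambda(\Delta u_\varepsilon)$ does not split cleanly, and one must carefully separate the region $\{|\Delta u_\varepsilon| \le 2\lambda^{p/(p-r)}\}$ (where the term $\lambda|v|^{r-1}v$ dominates and produces the decisive negative contribution) from the region where $|\Delta u_\varepsilon|$ is large (where $\overline F_\lambda$ is essentially the pure $(p+1)/p$ power and reproduces $\frac{2}{N}S^{pN/2(p+1)}$). The ring estimates of Lemma \ref{estimateFtv} are designed exactly for this splitting, so the work is to assemble them, track the $\varepsilon$-powers, optimize in $t$, and verify that the net sign is negative under the stated hypotheses; the case $rs=1$ additionally requires invoking \eqref{mulambda} to keep the quadratic-type term under control, and the borderline dimension $N=4$ needs the usual extra care with the logarithmic correction term.
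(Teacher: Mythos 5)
Your outline takes essentially the same route as the paper: test $I_F$ along the rescaled, truncated ground state $V_{\delta,a}$ of the limiting critical Lane--Emden system, use the mountain pass geometry from Proposition~\ref{prop-mpgeometry} for the lower bound $c_F>0$, and invoke Lemma~\ref{estimateFtv} to drive the maximum of $t\mapsto I_F(tV_{\delta,a})$ strictly below $\frac{2}{N}S^{pN/2(p+1)}$. However, there is one mechanism you leave vague that is in fact the hinge of the argument. You suggest ``substituting the ring estimates for $\int\overline{F}_\lambda(\Delta u_\varepsilon)$,'' but Lemma~\ref{estimateFtv} estimates $\int_\Omega f_\lambda^{-1}(t\Delta V_{\delta,a})\,t\Delta V_{\delta,a}\,dx$, not $\int\overline{F}_\lambda$; and the crude pointwise bound $\overline{F}_\lambda(\tau)\le\frac{p}{p+1}|\tau|^{(p+1)/p}$ throws away exactly the negative contribution you need. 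The paper resolves this by first applying the exact algebraic identity of Lemma~\ref{Ffeq}, $\overline{F}_\lambda(t)=\frac{p}{p+1}f_\lambda^{-1}(t)t-\frac{p-r}{p+1}\frac{\lambda}{r+1}|f_\lambda^{-1}(t)|^{r+1}$, and combining it with the Euler equation $I_F'(t_\delta V_{\delta,a})=0$ (i.e.\ \eqref{tdeltaq+1}) and the critical-hyperbola relation $\frac{p}{p+1}-\frac{1}{q+1}=\frac{2}{N}$ to rewrite
\begin{equation}
I_F(t_\delta V_{\delta,a})=\frac{2}{N}\int_\Omega f_\lambda^{-1}(t_\delta\Delta V_{\delta,a})\,t_\delta\Delta V_{\delta,a}\,dx-\lambda\,\frac{p-r}{(p+1)(r+1)}\big|f_\lambda^{-1}(t_\delta\Delta V_{\delta,a})\big|_{r+1}^{r+1}-\frac{\mu(q-s)\,t_\delta^{s+1}}{(q+1)(s+1)}|V_{\delta,a}|_{s+1}^{s+1},
\end{equation}
so that the last two terms are manifestly nonnegative and can simply be discarded; then \eqref{termoLaplacianoA} gives $\int_\Omega f_\lambda^{-1}(t\Delta V_{\delta,a})\,t\Delta V_{\delta,a}\,dx<t^{\frac{p+1}{p}}S$ in the claimed range of $(N,p)$, and the bound $t_\delta<S^{p/(pq-1)}$ from Lemma~\ref{tdeltaltd} finishes the computation. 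Finally, a small bookkeeping correction: the cutoff (residual) error that the $\lambda$-gain must dominate is not the scalar Brezis--Nirenberg $O(\varepsilon^{N-2})$ you cite, but a power such as $\delta^{N(r+1)/(r(p+1))}$ (with logarithmic corrections in borderline cases), and that comparison is performed once and for all in Step~3 of Lemma~\ref{estimateFtv}'s proof; at the level of the proposition one just uses the clean strict inequality \eqref{termoLaplacianoA}.
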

\begin{proof}
See Section \ref{sectionMP}.
\end{proof}

\section{(PS)$_c$ condition} \label{sec:PS}

When treating \eqref{prob}, the main difficulty is the lack of compactness for the embedding $E_p\hookrightarrow L^{q+1}(\Omega)$. Here we localize the levels $c$ for which the $(PS)_c$ condition holds. Throughout this section \eqref{pqHC}, \eqref{rscondition}, $\mu,\lambda>0$ are assumed, and the main results is the following.
\begin{prop}\label{propcompacidade}
$I_F$ satisfies the $(PS)_c$ condition for all $c<\frac{2}{N} S^{\frac{pN}{2(p+1)}}$.
\end{prop}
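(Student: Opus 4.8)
The plan is to follow the classical Brezis--Nirenberg concentration-compactness scheme adapted to the fourth order setting of \eqref{prob}, working throughout with the norm $\|u\| = |\Delta u|_{(p+1)/p}$ on $E_p$. Let $(u_n) \subset E_p$ be a Palais--Smale sequence at level $c < \frac{2}{N} S^{pN/(2(p+1))}$, i.e.\ $I_F(u_n) \to c$ and $I_F'(u_n) \to 0$ in $E_p^*$. The first step is to show $(u_n)$ is bounded in $E_p$. For this I would test $I_F'(u_n)$ against $u_n$ and combine with $I_F(u_n)$: using the sharp one-sided estimates on $\overline F_\lambda$ from Section \ref{sectiontec} (of the type in Lemma \ref{ineqFtrp}, giving $\overline F_\lambda(t)\,f_\lambda^{-1}(t) \geq$ a multiple of $\overline F_\lambda(t)$ plus controllable lower-order terms, i.e.\ a near-$(\theta,\theta')$ relation with $\theta = (p+1)/p$), a combination $I_F(u_n) - \frac{1}{q+1} I_F'(u_n)[u_n]$ dominates $\big(\frac{1}{\theta} - \frac{1}{q+1}\big)\int_\Omega |\Delta u_n|^\theta\,dx$ up to lower order terms in $\|u_n\|$ (the $\mu|u_n|^{s+1}$ term is subcritical, $\frac{r+1}{r} \leq s+1 < q+1$, so it is absorbed). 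Since $q+1 > \theta$, this yields $\|u_n\| \leq C$.

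Passing to a subsequence, $u_n \rightharpoonup u$ in $E_p$, $u_n \to u$ in $L^m(\Omega)$ for every $m < q+1$, and $u_n \to u$ a.e. The second step is to identify the weak limit as a weak solution: for any $\varphi \in E_p$, $I_F'(u_n)[\varphi] \to 0$; the linear term $\int \overline F_\lambda{}'(\Delta u_n)\Delta\varphi = \int f_\lambda^{-1}(\Delta u_n)\Delta\varphi$ passes to the limit because $f_\lambda^{-1}(\Delta u_n)$ is bounded in $L^{p+1}(\Omega)$ (again by the estimates on $f_\lambda^{-1}$) and converges a.e., hence weakly; the subcritical term passes by strong $L^s$ convergence; the critical term $\int |u_n|^{q-1}u_n\varphi$ passes because $|u_n|^{q-1}u_n$ is bounded in $L^{(q+1)/q}$ and converges a.e.\ (Brezis--Lieb / a dominated-type argument on a dense set of $\varphi$). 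Thus $I_F'(u) = 0$, and testing with $u$ shows $I_F(u) \geq 0$ (using the near-homogeneity to write $I_F(u) \geq (\frac1\theta - \frac1{q+1})\int|\Delta u|^\theta \geq 0$ modulo the sign of the lower-order piece, which is nonnegative here since all extra terms carry a $+$ sign).

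The third and decisive step is to upgrade weak to strong convergence. Set $w_n = u_n - u$. By a Brezis--Lieb type splitting — applied to $\int \overline F_\lambda(\Delta u_n)$, to $\int|u_n|^{q+1}$, and to $\int|u_n|^{s+1}$ (the last converging to $0$ by compactness) — together with the a.e.\ convergence of $f_\lambda^{-1}(\Delta u_n)$ and the asymptotic homogeneity of $\overline F_\lambda$ at infinity (so that $\overline F_\lambda(\Delta w_n)$ behaves like $\frac{p}{p+1}\cdot 2^{-(p+1)/p}|\Delta w_n|^\theta$ up to $o(1)$, with the delicate point being that only the large-values part of $w_n$ survives in the limit — this is exactly what the two-regime estimates of Section \ref{sectiontec} are designed for), one obtains
\[
c = I_F(u) + \lim_{n} \left[ \tfrac{p}{p+1}\,2^{-\frac{p+1}{p}} \int_\Omega |\Delta w_n|^{\frac{p+1}{p}}dx - \tfrac{1}{q+1}\int_\Omega |w_n|^{q+1}dx \right],
\]
and similarly $0 = I_F'(u_n)[u_n] - I_F'(u)[u] + o(1)$ gives $2^{-\frac{p+1}{p}}\int|\Delta w_n|^\theta - \int|w_n|^{q+1} \to 0$. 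Writing $\ell := \lim \int|w_n|^{q+1}$, the Sobolev inequality $\|w_n\| \geq S |w_n|_{q+1}$ combined with these two identities forces either $\ell = 0$ — whence $\|w_n\| \to 0$ and we are done — or $\ell \geq (2^{-1}S^{\theta})^{\cdots}$ a fixed positive quantity, in which case plugging back yields $c \geq I_F(u) + \frac{2}{N}S^{pN/(2(p+1))} \geq \frac{2}{N}S^{pN/(2(p+1))}$, contradicting the hypothesis on $c$. Hence $u_n \to u$ strongly in $E_p$.

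The main obstacle is the third step: because $f_\lambda$ is not a pure power, $\overline F_\lambda$ is only \emph{asymptotically} homogeneous of degree $(p+1)/p$, so the Brezis--Lieb lemma does not apply verbatim to $\int\overline F_\lambda(\Delta u_n)$ and $\int f_\lambda^{-1}(\Delta u_n)\Delta u_n$. One must split each integral over the regions $\{|\Delta u_n| \leq 2\lambda^{p/(p-r)}\}$ and its complement, show the contribution of the concentrating part $w_n$ on the bounded-values region is $o(1)$, and control the error between $\overline F_\lambda(t)$ and $\frac{p}{p+1}2^{-(p+1)/p}|t|^{(p+1)/p}$ on the large-values region uniformly — precisely the sharp estimates proved in Section \ref{sectiontec}. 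Getting the constant $2^{-(p+1)/p}$ to match the one appearing in $S^{pN/(2(p+1))}$ and in the mountain pass bound of Proposition \ref{prop-mplevel} is what makes the threshold $\frac{2}{N}S^{pN/(2(p+1))}$ the exact critical level.
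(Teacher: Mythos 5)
Your high-level plan (bounded PS sequence $\Rightarrow$ weak limit is a critical point $\Rightarrow$ Brezis--Lieb splitting forces either strong convergence or $c\geq \tfrac2N S^{pN/(2(p+1))}$) is a legitimate alternative to the route the paper takes, but as written it contains two concrete errors that invalidate the threshold computation, and it also glosses over the place where the non-power structure of $f_\lambda$ actually bites.

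First, the constant. You repeatedly use $\tfrac{p}{p+1}\,2^{-\frac{p+1}{p}}$ as ``the'' asymptotic coefficient of $\overline F_\lambda(t)/|t|^{\frac{p+1}{p}}$ at infinity. That factor $2^{-\frac{p+1}{p}}$ comes from the one-sided lower bound $f_\lambda^{-1}(t)\geq(t/2)^{1/p}$ of Lemma \ref{ineqftpr}, which is deliberately crude and only meant for the Mountain Pass geometry. The true asymptotic, read off from Lemma \ref{Ffeq2} together with $f_\lambda^{-1}(t)\sim t^{1/p}$, is $\overline F_\lambda(t)\sim\tfrac{p}{p+1}|t|^{\frac{p+1}{p}}$ with \emph{no} extra factor of $2^{-\frac{p+1}{p}}$; this is precisely why the paper's Lemma \ref{ineqnuk} lands on the coefficient $\tfrac{ps-1}{(p+1)(s+1)}$ and why, after combining with $\tfrac1{s+1}-\tfrac1{q+1}$, the clean constant $\tfrac2N$ appears. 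With your factor the final inequality reads $c\geq I_F(u)+\bigl(\tfrac{p}{p+1}2^{-\frac{p+1}{p}}-\tfrac1{q+1}\bigr)\ell$, and the coefficient in parentheses is not $\tfrac2N$ and can in fact be negative for part of the critical hyperbola, so you do not reach the contradiction you want for all admissible $(p,q)$.

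Second, the claim $I_F(u)\geq 0$. You propose testing $I_F'(u)=0$ against $u$ with the coefficient $\tfrac1{q+1}$. But $I_F(u)-\tfrac1{q+1}I_F'(u)[u]$ contains the term $\mu\bigl(\tfrac1{q+1}-\tfrac1{s+1}\bigr)\int|u|^{s+1}\,dx$, which is \emph{negative} since $s<q$; you cannot conclude $I_F(u)\geq 0$ from this combination, and the same objection already affects your boundedness step, where the subcritical piece is not ``absorbed'' for free. The paper uses the coefficient $\tfrac1{s+1}$ instead: then the $\mu|u|^{s+1}$-terms cancel exactly, the $|u|^{q+1}$-term carries the good coefficient $\tfrac1{s+1}-\tfrac1{q+1}>0$, and $\overline F_\lambda(t)-\tfrac1{s+1}f_\lambda^{-1}(t)t\geq 0$ holds pointwise by Corollary \ref{corFs}. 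Your coefficient is the wrong one.

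Finally, even with the constants fixed, the global Brezis--Lieb splitting is not as automatic as your sketch suggests. The splitting of $\int\overline F_\lambda(\Delta u_n)$ and of $\int f_\lambda^{-1}(\Delta u_n)\Delta u_n$ requires the a.e.\ convergence $\Delta u_n\to\Delta u$, which is itself a nontrivial consequence of the Dal Maso--Murat argument and is established only after the Lions concentration lemma and the monotonicity of $f_\lambda^{-1}$ have been brought to bear. The paper's actual route avoids the delicate global replacement of $\overline F_\lambda$ by its asymptotic power: it first localizes the defect of compactness at finitely many points $\{x_j\}$ (Lemmas \ref{convseqlim}--\ref{Jfinito}), then proves the sharp local estimate of Lemma \ref{ineqnuk} (where the cut-off $\zeta_\theta$ confines attention to a small ball on which $|\Delta u_n|$ is necessarily large, so the asymptotic constant $\tfrac{ps-1}{(p+1)(s+1)}$ applies cleanly), concludes $c\geq\tfrac2N\nu_j\geq\tfrac2N S^{\frac{pN}{2(p+1)}}$ when $J\neq\emptyset$, and only afterwards invokes Brezis--Lieb for $j(t)=tf_\lambda^{-1}(t)$ (a single, verifiable application) to upgrade to strong convergence. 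Your proposal compresses these steps into one global Brezis--Lieb identity; that is a genuinely different route, but to make it rigorous you would effectively have to reprove the content of Lemma \ref{ineqnuk} anyway, and you must replace the incorrect constants and the incorrect coefficient $\tfrac1{q+1}$ as explained above.
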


We split the proof of this proposition in several lemmas.

\begin{lem}\label{psltd}
Every (PS) sequence for $I_F$ is bounded
\end{lem}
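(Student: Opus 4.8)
The plan is to prove Lemma \ref{psltd} by the standard mountain-pass bookkeeping, adapted to the reduction-by-inversion setting, where the functional $I_F$ is \emph{not} homogeneous and one must exploit the pointwise estimates on $f_\lambda$ and $\overline F_\lambda$ (stated in the paper as Lemma \ref{ineqFtrp} and \eqref{ineq1'}). Let $(u_n)\subset E_p$ be a Palais--Smale sequence at some level $c$, i.e.\ $I_F(u_n)\to c$ and $I_F'(u_n)\to 0$ in $E_p^{\,*}$. The goal is a bound on $\|u_n\| = |\Delta u_n|_{\frac{p+1}{p}}$.

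First I would compute the usual combination $I_F(u_n) - \theta\, I_F'(u_n)u_n$ for a suitable $\theta$. Note $I_F'(u)u = \int_\Omega f_\lambda^{-1}(\Delta u)\,\Delta u\,dx - \mu|u|_{s+1}^{s+1} - |u|_{q+1}^{q+1}$, while $I_F(u) = \int_\Omega \overline F_\lambda(\Delta u)\,dx - \frac{\mu}{s+1}|u|_{s+1}^{s+1} - \frac{1}{q+1}|u|_{q+1}^{q+1}$. Choosing $\theta = \frac{1}{q+1}$ kills the top-order term $|u_n|_{q+1}^{q+1}$ exactly and leaves
\[
I_F(u_n) - \tfrac{1}{q+1} I_F'(u_n)u_n = \int_\Omega\Big(\overline F_\lambda(\Delta u_n) - \tfrac{1}{q+1} f_\lambda^{-1}(\Delta u_n)\Delta u_n\Big)dx - \mu\Big(\tfrac{1}{s+1}-\tfrac{1}{q+1}\Big)|u_n|_{s+1}^{s+1}.
\]
The left-hand side is $\le c + o(1) + o(1)\|u_n\|$. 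For the right-hand side, the algebraic heart is the pointwise inequality $\overline F_\lambda(t) - \frac{1}{q+1} f_\lambda^{-1}(t)t \ge \delta\, |t|^{\frac{p+1}{p}} - C$ for constants $\delta>0$, $C>0$ depending only on $p,r,q,\lambda$; this holds because, for $|t|$ large, $f_\lambda^{-1}(t)\sim |t|^{1/p}$ so $\overline F_\lambda(t)\sim \frac{p}{p+1}|t|^{\frac{p+1}{p}}$ while $\frac{1}{q+1} f_\lambda^{-1}(t)t \sim \frac{1}{q+1}|t|^{\frac{p+1}{p}}$, and $\frac{p}{p+1} > \frac{1}{q+1}$ is exactly the critical-hyperbola constraint \eqref{pqHC} (which gives $\frac{1}{q+1} = \frac{N-2}{N} - \frac{1}{p+1} < \frac{p}{p+1}$); for $|t|$ bounded the difference is bounded. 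Such a pointwise estimate should either be invoked from the technical Section \ref{sectiontec} or established in a line from Lemma \ref{ineqFtrp}. Integrating gives $\delta\|u_n\|^{\frac{p+1}{p}} \le c + 1 + o(1)\|u_n\| + C|\Omega| + \mu\big(\tfrac{1}{s+1}-\tfrac{1}{q+1}\big)|u_n|_{s+1}^{s+1}$.

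It remains to absorb the lower-order term $|u_n|_{s+1}^{s+1}$. Since $s<q$ and $(r,s)$ satisfies \eqref{rscondition}, the exponent $s+1$ is strictly below $q+1$; moreover $s+1 < \frac{p+1}{p}\cdot$ (Sobolev exponent), so by the continuous embedding $E_p\hookrightarrow L^{s+1}(\Omega)$ together with Young's inequality, $\mu\big(\tfrac{1}{s+1}-\tfrac{1}{q+1}\big)|u_n|_{s+1}^{s+1} \le C\|u_n\|^{s+1} \le \tfrac{\delta}{2}\|u_n\|^{\frac{p+1}{p}} + C'$ provided $s+1 < \frac{p+1}{p}$, i.e.\ $s < \frac{1}{p-1}$; if instead $s+1 \ge \frac{p+1}{p}$ one argues slightly differently, but in all cases $s+1 < \min\{q+1, 2^*\tfrac{p}{p+1}\}$ guarantees the term is genuinely subcritical relative to $\|u_n\|^{\frac{p+1}{p}}$ after Sobolev and can be absorbed — in the borderline homogeneous case $\frac1r=s$ one uses instead the smallness condition \eqref{mulambda} to keep the coefficient positive, exactly as in the proof of Proposition \ref{prop-mpgeometry}. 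Absorbing the $o(1)\|u_n\|$ term is harmless because $\frac{p+1}{p}>1$. This yields $\|u_n\|^{\frac{p+1}{p}} \le C(c, p, q, r, s, \lambda, \mu, \Omega)$, i.e.\ boundedness.

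The main obstacle I anticipate is purely the non-homogeneity: unlike the pure-power case, neither $\overline F_\lambda$ nor $f_\lambda^{-1}$ scales cleanly, so the clean Ambrosetti--Rabinowitz identity is replaced by the pointwise comparison $\overline F_\lambda(t) - \frac{1}{q+1}f_\lambda^{-1}(t)t \ge \delta|t|^{\frac{p+1}{p}} - C$, whose verification near $t=0$ and near $t=\infty$ (and the precise value of $\delta$) is the delicate technical point — it is precisely what the sharp estimates on $f_\lambda$ in Section \ref{sectiontec} are designed to supply. A secondary, minor point is keeping the coefficient of the absorbed $|u_n|_{s+1}^{s+1}$ term under control in the critical borderline $\frac1r=s$, handled via \eqref{mulambda} exactly as in Proposition \ref{prop-mpgeometry}.
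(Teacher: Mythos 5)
Your overall strategy (an Ambrosetti--Rabinowitz type combination plus the pointwise estimates on $\overline F_\lambda$ and $f_\lambda^{-1}$) is the right family of argument, but the specific choice $\theta=\frac{1}{q+1}$ creates a gap that your absorption step cannot close. With that choice the critical term cancels, but you are left with $-\mu\bigl(\frac{1}{s+1}-\frac{1}{q+1}\bigr)|u_n|_{s+1}^{s+1}$ on the wrong side, and you propose to absorb $C\|u_n\|^{s+1}$ into $\delta\|u_n\|^{\frac{p+1}{p}}$ by Young. This is impossible for every $(r,s)$ allowed by \eqref{rscondition}: since $rs\geq 1$ and $r<p$, one always has $s\geq \frac1r>\frac1p$, hence $s+1>\frac{p+1}{p}$, so the term you must absorb carries a strictly \emph{higher} power of $\|u_n\|$ than the coercive term (indeed $s$ may be arbitrarily close to $q$). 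Your fallback, that $s+1<q+1$ ("subcritical") suffices, conflates compactness of the embedding $E_p\hookrightarrow L^{s+1}$ with a power-counting inequality; the former gives no bound of the form $\|u_n\|^{s+1}\leq \tfrac{\delta}{2}\|u_n\|^{\frac{p+1}{p}}+C'$. The smallness condition \eqref{mulambda} in the case $rs=1$ does not rescue this either: a small positive coefficient in front of $\|u_n\|^{\frac{r+1}{r}}$ with $\frac{r+1}{r}>\frac{p+1}{p}$ still dominates as $\|u_n\|\to\infty$ (in the paper, \eqref{mulambda} is only needed for the mountain pass geometry, not for boundedness of (PS) sequences).

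The repair is to take $\theta=\frac{1}{s+1}$ instead, which is what the paper does. Then the $|u_n|_{s+1}^{s+1}$ term cancels exactly, the $|u_n|_{q+1}^{q+1}$ term appears with the positive coefficient $\frac{1}{s+1}-\frac{1}{q+1}$ (since $s<q$) and can simply be dropped, and what remains is $\int_\Omega\bigl(\overline F_\lambda(\Delta u_n)-\frac{1}{s+1}f_\lambda^{-1}(\Delta u_n)\Delta u_n\bigr)dx$. The hypotheses $rs\geq1$ and $r<p$, $s<q$ (so $ps>1$) guarantee, via the identity $\overline F_\lambda(t)=\lambda\frac{r}{r+1}|f_\lambda^{-1}(t)|^{r+1}+\frac{p}{p+1}|f_\lambda^{-1}(t)|^{p+1}$, that this integrand is nonnegative everywhere (Corollary \ref{corFs}) and bounded below by $\tau|t|^{\frac{p+1}{p}}$ for $|t|\geq 2\lambda^{\frac{p}{p-r}}$ (Lemma \ref{lema2notas}); splitting $\Omega$ according to $|\Delta u_n|\gtrless 2\lambda^{\frac{p}{p-r}}$ then yields $\epsilon_n\|u_n\|+c\geq \tau\|u_n\|^{\frac{p+1}{p}}-C(\lambda,p,r,|\Omega|)$ and hence boundedness, with no leftover lower-order term to absorb. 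Your pointwise estimate for $\overline F_\lambda(t)-\frac{1}{q+1}f_\lambda^{-1}(t)t$ is actually correct (it even holds without the $-C$, because $rq>rs\geq1$), but with $\theta=\frac1{q+1}$ it is the $L^{s+1}$ term, not the coercivity, that breaks the argument.
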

\begin{proof}
Let $(u_n)$ be a (PS) sequence for $I_F$. So, using Corollary \ref{corFs} and Lemma \ref{lema2notas}, there exists $c\in \R$ and a positive sequence $(\epsilon_n)$ with $\epsilon_n \rightarrow 0$ such that 
\begin{multline}
   \displaystyle \epsilon_n \Vert u_n\Vert + c \geq I_F(u_n)- \frac{I_F'(u_n)u_n}{s+1} 
   =\displaystyle \int_{\Omega}  \overline{F}_\lambda(\Delta u_n)-\frac{f_\lambda^{-1}(\Delta u_n) \Delta u_n}{s+1}   dx +\frac{(q-s)|u_n|_{q+1}^{q+1}}{(s+1)(q+1)} \\
  \geq \displaystyle \int_{|\Delta u_n|\geq2\lambda^\frac{p}{p-r}}    \overline{F}_\lambda(\Delta u_n)-\frac{f_\lambda^{-1}(\Delta u_n) \Delta u_n }{s + 1}  dx 
  \geq \tau \displaystyle \int_{|\Delta u_n|\geq 2\lambda^\frac{p}{p-r}} |\Delta u_n|	^{\frac{p+1}{p}}dx \\
  \geq \tau \left(  \displaystyle \int_{\Omega} |\Delta u_n|^{\frac{p+1}{p}}dx - \displaystyle \int_{|\Delta u_n|\leq 2\lambda^\frac{p}{p-r}} |\Delta u_n|^{\frac{p+1}{p}}dx\right)\geq \tau\Vert u_n\Vert ^{\frac{p+1}{p}}-2^{\frac{p+1}{p}}\lambda^\frac{p+1}{p-r}|\Omega|,
\end{multline}
which implies the boundedness of $(\Vert u_n\Vert )$.
\end{proof}

To localize the levels where $I_F$ satisfies the $(PS)$ condition the following result, due to P.-L. Lions, is necessary.
\begin{lem}\label{convseqlim}
Given a bounded sequence $(u_n)$ in $E_p$, there exists a subsequence, also denoted here by $(u_n)$, such that:
\begin{itemize}
\item[(i)] $u_n \rightharpoonup u$ in $E_p$.
\item[(ii)] $u_n \rightarrow u$ a.e. in $\Omega$ and in $L^{\theta}(\Omega)$, for all $1\leq \theta < q+1$.
\item[(iii)] $\left| \Delta u_n\right|^{\frac{p+1}{p}} \stackrel{*}{\rightharpoonup} \gamma$ in the sense of measures on $\overline{\Omega}$.
\item[(iv)] $\left| u_n \right|^{q+1} \stackrel{*}{\rightharpoonup} \nu$ in the sense of measures on $\overline{\Omega}$.
\item[(v)] There exist an at most countable index set $J$, a family of points $\{ x_j: j \in J \} \subset \overline{\Omega}$ and two sequences $\{ \nu_j: j \in J \} ,\{ \gamma_j: j \in J\} \subset (0, +\infty)$ such that:
\[
\displaystyle{ \nu = \left| u \right|^{q+1} + \sum_{j \in J} \nu_j \delta_{x_j}, \,\, \gamma \geq \left| \Delta u \right|^{\frac{p+1}{p}} + \sum_{j\in J} \gamma_j \delta_{x_j}},
\]
\[
\displaystyle {S \, \nu_j^{\frac{p+1}{p}\frac{1}{q+1}} \leq \gamma_j \,\, \hbox{for all}\,\, j \in J, \,\, \hbox{in particular}\,\,\sum_{j \in J} \nu_j^{\frac{p+1}{p}\frac{1}{q+1}} < + \infty}.
\]
\item[(vi)] $\nabla u_n \rightharpoonup \nabla u$ in $\left( W^{1, \frac{p+1}{p}}(\Omega) \right)^N$.
\item[(vii)] $\nabla u_n \rightarrow \nabla u$ a.e. in $\Omega$ and in $\left( L^{\sigma}(\Omega) \right)^N$, for all $1 \leq \sigma < \sigma^*$, with $\sigma^* > \frac{p+1}{p}$ depending on the critical Sobolev embedding of $W^{1, \frac{p+1}{p}}(\Omega)$.
\end{itemize}
\end{lem}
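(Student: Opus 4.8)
The plan is to adapt P.-L.\ Lions' concentration-compactness principle to the fourth-order space $E_p$, extracting all the listed subsequential limits along one and the same subsequence by a diagonal procedure. Since $\frac{p+1}{p}\in(1,\infty)$, $E_p$ is a reflexive Banach space, so a subsequence of $(u_n)$ converges weakly, which is (i). By the Calder\'on--Zygmund estimate together with $u_n\in W^{1,\frac{p+1}{p}}_0(\Omega)$ one has $\|u_n\|_{W^{2,\frac{p+1}{p}}(\Omega)}\lesssim|\Delta u_n|_{\frac{p+1}{p}}=\|u_n\|$, so $(u_n)$ is bounded in $W^{2,\frac{p+1}{p}}(\Omega)$; since, by \eqref{pqHC}, $q+1$ is precisely the critical Sobolev exponent of $W^{2,\frac{p+1}{p}}(\Omega)$ (indeed $2\frac{p+1}{p}<N$ whenever $q>0$), the Rellich--Kondrachov theorem yields, after a further subsequence, strong and a.e.\ convergence in $L^\theta(\Omega)$ for every $\theta<q+1$, which is (ii). The same estimate shows $(\nabla u_n)$ is bounded in $W^{1,\frac{p+1}{p}}(\Omega)^N$, hence weakly convergent there, which is (vi), and Rellich--Kondrachov again gives strong and a.e.\ convergence in $(L^\sigma(\Omega))^N$ for all $\sigma<\sigma^*$, where $\sigma^*$ is the critical exponent of $W^{1,\frac{p+1}{p}}(\Omega)$ and always satisfies $\sigma^*>\frac{p+1}{p}$; this is (vii). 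Finally $|\Delta u_n|^{\frac{p+1}{p}}$ and $|u_n|^{q+1}$ are bounded in $L^1(\Omega)$ (the latter via $E_p\hookrightarrow L^{q+1}(\Omega)$), so, viewed as elements of $C(\overline\Omega)^*$, Banach--Alaoglu produces weak-$*$ convergent subsequences with limits $\gamma$ and $\nu$, which are (iii) and (iv).

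The core is (v). I would set $v_n:=u_n-u\rightharpoonup 0$ in $E_p$; since $u_n\to u$ a.e., the Brezis--Lieb lemma gives, as measures, $\nu=|u|^{q+1}+\widetilde\nu$, where $\widetilde\nu$ is the weak-$*$ limit (along a further subsequence) of $|v_n|^{q+1}$, and I also let $\widetilde\gamma$ be the weak-$*$ limit of $|\Delta v_n|^{\frac{p+1}{p}}$. For $\phi\in C_c^\infty(\mathbb{R}^N)$ we have $\phi v_n\in E_p$, so the definition of $S$ gives $S\,|\phi v_n|_{q+1}\le|\Delta(\phi v_n)|_{\frac{p+1}{p}}$. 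Expanding $\Delta(\phi v_n)=\phi\,\Delta v_n+2\nabla\phi\cdot\nabla v_n+v_n\,\Delta\phi$ and noting that the last two terms tend to $0$ in $L^{\frac{p+1}{p}}(\Omega)$, by (vii) (since $\frac{p+1}{p}<\sigma^*$ and $\Omega$ is bounded) and by (ii) (since $\frac{p+1}{p}<q+1$), a passage to the limit yields the reverse H\"older inequality
\[ S\Big(\int_{\overline\Omega}|\phi|^{q+1}\,d\widetilde\nu\Big)^{\frac{1}{q+1}}\le\Big(\int_{\overline\Omega}|\phi|^{\frac{p+1}{p}}\,d\widetilde\gamma\Big)^{\frac{p}{p+1}},\qquad \phi\in C_c^\infty(\mathbb{R}^N). \]
The classical lemma of Lions on such inequalities then forces $\widetilde\nu=\sum_{j\in J}\nu_j\delta_{x_j}$ with $J$ at most countable, $x_j\in\overline\Omega$, $\nu_j>0$, and $\widetilde\gamma(\{x_j\})\ge S^{\frac{p+1}{p}}\nu_j^{\frac{p+1}{p}\frac{1}{q+1}}$. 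Using $\Delta u_n=\Delta v_n+\Delta u$, testing with functions concentrating at $x_j$, and the absolute continuity of $t\mapsto\int_{B_t(x_j)}|\Delta u|^{\frac{p+1}{p}}$, one obtains $\gamma(\{x_j\})\ge\widetilde\gamma(\{x_j\})$; writing $\gamma_j:=\gamma(\{x_j\})$ and noting $\gamma\ge|\Delta u|^{\frac{p+1}{p}}$ (weak lower semicontinuity, as $\Delta u_n\rightharpoonup\Delta u$ in $L^{\frac{p+1}{p}}$), the mutual singularity of absolutely continuous and atomic parts gives $\gamma\ge|\Delta u|^{\frac{p+1}{p}}+\sum_j\gamma_j\delta_{x_j}$ and $S\,\nu_j^{\frac{p+1}{p}\frac{1}{q+1}}\le\gamma_j$; finally $\sum_j\gamma_j\le\gamma(\overline\Omega)<\infty$ makes $\sum_j\nu_j^{\frac{p+1}{p}\frac{1}{q+1}}$ convergent. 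This is (v).

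I expect the only genuinely delicate point to be the cross-term estimate in (v). Because the reduction by inversion turns the system into a fourth-order problem, the relevant Sobolev inequality lives at the level of $\Delta$, so localization by $\phi$ generates the terms $\nabla\phi\cdot\nabla v_n$ and $v_n\,\Delta\phi$, which could a priori carry part of the concentration mass; the reason they do not is precisely item (vii) — the compactness of the gradients in $L^\sigma$ for some $\sigma>\frac{p+1}{p}$, which in turn rests on the Calder\'on--Zygmund regularity available for $E_p$. Everything else is the standard concentration-compactness machinery (reflexivity, Rellich--Kondrachov, Brezis--Lieb, Banach--Alaoglu, and Lions' reverse-H\"older lemma), to be carried out in the order above.
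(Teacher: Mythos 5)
Your argument is correct and is the standard Lions concentration--compactness proof adapted to the fourth-order space $E_p$, which is precisely what the paper does by citing \cite[Lemma I.1]{lions} and \cite[Lemma 3.3]{ederson} instead of writing it out. In particular, the way you dispose of the lower-order commutator terms $2\nabla\phi\cdot\nabla v_n+v_n\Delta\phi$ using (vii) and (ii), relying on the elliptic estimate $\|u\|_{W^{2,(p+1)/p}}\lesssim|\Delta u|_{(p+1)/p}$ on $E_p$ and the strict inequality $pq>1$ along the critical hyperbola, matches the argument in the cited reference.
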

\begin{proof}
See \cite[Lemma I.1]{lions} or \cite[Lemma 3.3]{ederson}.
\end{proof}
An improvement of the previous lemma is given next.
\begin{lem}\label{Jfinito}
If $(u_n)$ is a (PS)-sequence for $I_F$, then there exists a subsequence, for short also denoted by $(u_n)$, satisfying \rm{(i)-(vii)} from Lemma \ref{convseqlim} with the additional fact that $J$ is at most finite.
\end{lem}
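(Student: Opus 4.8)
The plan is to show that each atom $\nu_j$ in the decomposition of $\nu$ from Lemma \ref{convseqlim}(v) carries a definite amount of energy bounded below by a positive constant, so that only finitely many atoms can coexist with a uniform energy bound coming from the $(PS)$ condition. First I would fix a point $x_j$ with $j\in J$ and, for small $\varepsilon>0$, choose a cutoff function $\phi_{\varepsilon}\in C_c^{\infty}(\R^N)$ with $\phi_{\varepsilon}\equiv 1$ on $B(x_j,\varepsilon)$, $\operatorname{supp}\phi_{\varepsilon}\subset B(x_j,2\varepsilon)$ and $|\nabla\phi_{\varepsilon}|\le C/\varepsilon$, $|\Delta\phi_{\varepsilon}|\le C/\varepsilon^{2}$. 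Testing $I_F'(u_n)\to 0$ against $\phi_{\varepsilon}u_n$ (which is bounded in $E_p$ by Lemma \ref{psltd} and the product estimates), I get
\[
\int_{\Omega} f_{\lambda}^{-1}(\Delta u_n)\,\Delta(\phi_{\varepsilon}u_n)\,dx
= \mu\int_{\Omega}|u_n|^{s+1}\phi_{\varepsilon}\,dx
+\int_{\Omega}|u_n|^{q+1}\phi_{\varepsilon}\,dx + o(1).
\]
Expanding $\Delta(\phi_{\varepsilon}u_n)=\phi_{\varepsilon}\Delta u_n + 2\nabla\phi_{\varepsilon}\cdot\nabla u_n + u_n\Delta\phi_{\varepsilon}$, the leading term is $\int \phi_{\varepsilon} f_{\lambda}^{-1}(\Delta u_n)\Delta u_n\,dx$, which by the sharp estimates on $f_{\lambda}^{-1}$ in Section \ref{sectiontec} (controlling $f_{\lambda}^{-1}(t)t$ from below and above by $|t|^{(p+1)/p}$ up to a bounded region where $|\Delta u_n|\le 2\lambda^{p/(p-r)}$) is comparable to $\int\phi_{\varepsilon}|\Delta u_n|^{(p+1)/p}\,dx \to \int\phi_{\varepsilon}\,d\gamma$ as $n\to\infty$, modulo an error that is $O(\varepsilon^{N})$ from the bounded-integrand region and vanishes as $\varepsilon\to 0$.

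Next I would show that the cross terms and the subcritical term are negligible in the limit $n\to\infty$ then $\varepsilon\to 0$. For the term $\int u_n\Delta\phi_{\varepsilon}\,f_{\lambda}^{-1}(\Delta u_n)\,dx$, I use H\"older with exponents $\frac{p+1}{p}$ and $p+1$ together with $|f_{\lambda}^{-1}(\Delta u_n)|\lesssim |\Delta u_n|^{1/p}+1$, the uniform bound on $\||\Delta u_n|^{(p+1)/p}\|$, and the strong $L^{p+1}$-convergence on the shrinking ball $B(x_j,2\varepsilon)$ to bound it by $C\varepsilon^{-2}\varepsilon^{N(\ldots)}\|u\|_{L^{p+1}(B(x_j,2\varepsilon))}$ which tends to $0$; similarly for $\int\nabla\phi_{\varepsilon}\cdot\nabla u_n\,f_{\lambda}^{-1}(\Delta u_n)\,dx$, now using Lemma \ref{convseqlim}(vi)--(vii) on the strong convergence of $\nabla u_n$ in $L^{\sigma}$ for some $\sigma>\frac{p+1}{p}$. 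The subcritical term $\mu\int|u_n|^{s+1}\phi_{\varepsilon}\to\mu\int|u|^{s+1}\phi_{\varepsilon}\to 0$ by dominated convergence since $s+1<q+1$ and $u\in L^{q+1}$. Passing to the limit gives $\gamma(\{x_j\})\le \nu(\{x_j\})=\nu_j$; more precisely, combined with Lemma \ref{convseqlim}(v), $\gamma_j\le\nu_j$, and with $S\nu_j^{\frac{p+1}{p}\frac{1}{q+1}}\le\gamma_j$ this forces
\[
\nu_j \ge S^{\frac{(q+1)p}{(q+1)p-(p+1)}} = S^{\frac{pN}{2}},
\]
using $\frac{p+1}{p}\frac{1}{q+1}=\frac{2}{N}\cdot\frac{p+1}{p(q+1)}$ together with \eqref{pqHC} to simplify the exponent, so each surviving atom has mass bounded below by a fixed positive number.

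Finally I would use the energy bound. If $(u_n)$ is a $(PS)_c$ sequence, then as in Lemma \ref{psltd} we have $c = \lim (I_F(u_n) - \frac{1}{s+1}I_F'(u_n)u_n)$, and expanding via Corollary \ref{corFs}/Lemma \ref{lema2notas} the only term that is not sign-definite is controlled so that $c \ge \frac{q-s}{(s+1)(q+1)}\limsup_n |u_n|_{q+1}^{q+1} \ge \frac{q-s}{(s+1)(q+1)}\sum_{j\in J}\nu_j \ge \frac{q-s}{(s+1)(q+1)}\,S^{pN/2}\,\#J$. Hence $\#J\le \frac{(s+1)(q+1)}{q-s}\,c\,S^{-pN/2}<\infty$, which is exactly the claim. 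The main obstacle I anticipate is the concentration estimate at the atoms: because $f_{\lambda}$ is not a pure power, one cannot quote the classical Brezis--Lieb/concentration-compactness computation verbatim, and one must carefully split $\Omega$ into $\{|\Delta u_n|\le 2\lambda^{p/(p-r)}\}$ and its complement, using the sharp two-sided bounds on $f_{\lambda}^{-1}(t)t$ proved in Section \ref{sectiontec} so that the lower-order region contributes only an $O(\varepsilon^N)$ error; getting the cross-term estimates (especially the gradient term) to close also requires the full strength of the P.-L. Lions lemma, parts (vi)--(vii).
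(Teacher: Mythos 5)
Your proposal follows essentially the same strategy as the paper: localize at an atom $x_j$ with a cutoff whose derivatives vanish near $x_j$, test $I_F'(u_n)$ against the cutoff times $u_n$, kill the cross terms and the subcritical term in the double limit, and use the sharp two-sided bounds on $f_\lambda^{-1}(t)t$ (splitting off the region where $|\Delta u_n|$ is bounded) to compare the leading term with $\int \zeta\,d\gamma$, which yields $\gamma_j\le\nu_j$ and then, via $S\nu_j^{\frac{p+1}{p}\frac{1}{q+1}}\le\gamma_j$, a uniform positive lower bound on $\nu_j$. The one genuinely different step is the final counting: you bound $\#J$ through the energy identity $c\ge \frac{q-s}{(s+1)(q+1)}\sum_j\nu_j$ (valid by Corollary \ref{corFs}, and fine since a (PS) sequence has bounded levels), whereas the paper simply invokes the summability $\sum_{j\in J}\nu_j^{\frac{p+1}{p}\frac{1}{q+1}}<+\infty$ already contained in Lemma \ref{convseqlim}(v); both close the argument, the paper's being slightly more economical and independent of the level $c$. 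Two small inaccuracies to fix: first, your simplification of the lower bound is off — from $S\nu_j^{\frac{p+1}{p}\frac{1}{q+1}}\le\nu_j$ and \eqref{pqHC} one gets $\nu_j\ge S^{\frac{pN}{2(p+1)}}$, not $S^{\frac{pN}{2}}$ (your unsimplified exponent $\frac{p(q+1)}{p(q+1)-(p+1)}$ is correct); this does not affect finiteness of $J$, but the correct constant is exactly the one needed later against the threshold $\frac{2}{N}S^{\frac{pN}{2(p+1)}}$. Second, the claimed strong $L^{p+1}$-convergence of $u_n$ on shrinking balls is not available when $p\ge q$ (compactness holds only below the critical exponent $q+1$); the term $\int u_n\Delta\phi_\varepsilon f_\lambda^{-1}(\Delta u_n)\,dx$ should instead be handled as in the paper, via H\"older with $\frac{1}{p+1}+\frac{1}{q+1}+\frac{2}{N}=1$ and the weak-$*$ convergence of $|u_n|^{q+1}$, exploiting that $\Delta\phi_\varepsilon$ vanishes near the atom, or equivalently with the exponent $\frac{p+1}{p}<q+1$ in place of $p+1$. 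Also note the paper proves the full equality $\gamma_j=\nu_j$ here (using the upper bound $f_\lambda^{-1}(t)t\le |t|^{\frac{p+1}{p}}$ as well), which is not needed for this lemma but is used later in the proof of Proposition \ref{propcompacidade}.
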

\begin{proof}
Let $x_j \in \overline{\Omega}$ be a point in the singular support of $\mu$ and $\nu$. Let $\zeta \in C^{\infty}_{c}(\R^{N} )$ such that $0\leq \zeta \leq 1$, $\zeta\equiv1$ in $B(0,1)$ and $supp(\zeta) \subset B(0,2)$. Moreover for each $\theta >0$ define $\zeta_\theta(x) := \zeta(\frac{x-x_j}{\theta})$.
So there exists constants $c_1$ and $c_2$ independent of  $\theta$ such that
$$ |\nabla \zeta_\theta (x)| \leq \frac{c_1}{\theta}, \  |\Delta \zeta_\theta (x)| \leq \frac{c_2}{\theta^2}, \ \forall\, x \in \R^N.$$
By  \cite[Lemma 3.4 ]{ederson}, $u_n \zeta_\theta \in E_p, \ \forall\, n \in \N \ and \ \theta>0$. Fixing $\theta >0$, since $(\zeta_\theta u_n)$ is bounded in $E_p$, $ \langle I_F'(u_n),\zeta_\theta u_n\rangle=o(1)$ that is
\begin{equation}\label{notas 1}
\begin{array}{lll}
o(1)&=\displaystyle \int_{\overline{\Omega}} f_\lambda^{-1}(\Delta u_n)\Delta u_n \zeta_\theta dx - \displaystyle \int_{\overline{\Omega}} |u_n|^{q+1} \zeta_\theta dx - \mu \displaystyle \int_{\overline{\Omega}} |u_n|^{s+1}\zeta_\theta dx\\
 &\quad+ \displaystyle \int_{\overline{\Omega}} f_\lambda^{-1}(\Delta u_n) u_n \Delta \zeta_\theta dx + 2 \displaystyle \int_{\overline{\Omega}} f_\lambda^{-1}(\Delta u_n) \nabla u_n \nabla \zeta_\theta dx.
\end{array}
\end{equation}
On the other hand, $\zeta_{\theta} (x) \xrightarrow{\theta \rightarrow 0} \delta_{x_j} (x),\ \forall\, x \in \Omega$.
So from $u_n \rightharpoonup u$ in $E_p$ and $E_p\subset\subset L^{s+1}(\Omega)$
\begin{equation}
\displaystyle \int_{\overline{\Omega}} |u_n|^{s+1}\zeta_\theta dx \xrightarrow{n \rightarrow \infty}  \displaystyle \int_{\overline{\Omega}} |u|^{s+1} \zeta_\theta dx \xrightarrow{\theta \rightarrow 0} 0.
\end{equation}
Now, by \eqref{ineqinv} and Hölder inequality $\left( \frac{1}{p+1}+ \frac{1}{q+1} + \frac{2}{N}=1 \right)$, there exists $C>0$ independent of $n$ and $\theta$ such that
\begin{equation}
\displaystyle \left| \int_{\overline{\Omega}} f_\lambda^{-1}(\Delta u_n) u_n \Delta \zeta_\theta dx \right| \leq \! \int_{\Omega} |\Delta u_n|^{1/p} |u_n| |\Delta \zeta_\theta| dx \leq C \!\left( \!\displaystyle \int_{\overline{\Omega}} |u_n|^{q+1} \left|\Delta \zeta\left(\frac{x-x_j}{\theta}\right)\right|^{\frac{q+1}{2}} \!dx\right)^{\frac{1}{q+1}} \!\!\!\!.
\end{equation}
From the definition of the weak* convergence
\begin{equation}
\displaystyle \int_{\overline{\Omega}} |u_n|^{q+1}  \left|\Delta \zeta\left( \frac{x-x_j}{\theta}\right)\right|^{\frac{q+1}{2}}  dx \xrightarrow{n \rightarrow \infty} \displaystyle \int_{\overline{\Omega}}   \left|\Delta\zeta\left( \frac{x-x_j}{\theta} \right)\right|^{\frac{q+1}{2}}  d\nu,
\end{equation}
and since $|\Delta \zeta (\frac{x-x_j}{\theta})|^{(\frac{q+1}{2})} \xrightarrow{\theta \rightarrow 0} 0\ \forall\, x \in \Omega$, by the Lebesgue dominated convergence theorem, 
\begin{equation}
\int_{\overline{\Omega}} \left|\Delta \zeta \left(\frac{x-x_j}{\theta}\right)\right|^{\frac{q+1}{2}} d\nu \xrightarrow{\theta \rightarrow 0} 0.
\end{equation}
We also have
\begin{multline}
\left| \displaystyle \int_{\overline{\Omega}} f_\lambda^{-1}(\Delta u_n) \nabla u_n \nabla \zeta_\theta dx \right| \leq C \left(\displaystyle \int_{\overline{\Omega}}  \left(\frac{1}{\theta} \left| \nabla\zeta\left(\frac{x-x_j}{\theta}\right)\right| |\nabla u_n|\right)^{\frac{p+1}{p}}dx\right)^{\frac{p}{p+1}} \ \ \text{and}\\
\displaystyle \int_{\overline{\Omega}} \!\left(\frac{1}{\theta} \left| \nabla\zeta\left(\frac{x-x_j}{\theta}\right)\right| |\nabla u_n|\right)^{\frac{p+1}{p}}\!\!dx \xrightarrow{n \rightarrow \infty} \displaystyle \int_{\overline{\Omega}} \!\left(\frac{1}{\theta} \left| \nabla\zeta\left(\frac{x-x_j}{\theta}\right)\right| |\nabla u|\right)^{\frac{p+1}{p}}\!\!dx= O(\theta^{N-\frac{p+1}{p}}).
\end{multline}

Given $\epsilon>0$ let $M(\epsilon)>0$ be such that $f_\lambda^{-1}(t)t\geq \frac{1}{1+\epsilon}|t|^\frac{p+1}{p}$ for all $|t|\geq M(\epsilon)$. Then define
\begin{equation}\label{AnBn}
A_n:=\{x\in B(x_j,2\theta)\cap \overline{\Omega}; |\Delta u_n(x)|\geq M(\epsilon)\}, B_n:= (\overline{\Omega}\cap B(x_j,2\theta))\backslash A_n.
\end{equation}
Then, 
\begin{multline}\label{notas 7}
\displaystyle \int_{\overline{\Omega}} f_\lambda^{-1}(\Delta u_n)\Delta u_n \zeta_\theta dx =\displaystyle \int_{A_n} f_\lambda^{-1}(\Delta u_n)\Delta u_n \zeta_\theta dx+\displaystyle \int_{B_n}f_\lambda^{-1}(\Delta u_n)\Delta u_n \zeta_\theta dx \\
 \geq\frac{1}{1+\epsilon} \displaystyle \int_{\overline{\Omega}} |\Delta u_n|^{\frac{p+1}{p}} \zeta_\theta dx + \displaystyle \int_{B_n} (f_\lambda^{-1}(\Delta u_n)\Delta u_n- \frac{1}{1+\epsilon} |\Delta u_n|^{\frac{p+1}{p}}) \zeta_\theta dx\\
 =\frac{1}{1+\epsilon} \displaystyle \int_{\overline{\Omega}} |\Delta u_n|^{\frac{p+1}{p}}\zeta_\theta dx+ \displaystyle \int_{B_n} (f_\lambda^{-1}(\Delta u_n)\Delta u_n- \frac{1}{1+\epsilon} |\Delta u_n|^{\frac{p+1}{p}}) \zeta_\theta dx\longrightarrow \frac{1}{1+\epsilon}\gamma_j
\end{multline}
by taking the limit as $n \rightarrow \infty$ and after as $\theta \rightarrow 0$, because
\begin{multline}
\lim_{\theta\rightarrow 0} \limsup_{n\rightarrow \infty} \left|\displaystyle \int_{B_n} (f_\lambda^{-1}(\Delta u_n)\Delta u_n- \frac{1}{1+\epsilon} |\Delta u_n|^{\frac{p+1}{p}}) \zeta_\theta dx \right|\\ \leq \lim_{\theta\rightarrow 0} \limsup_{n\rightarrow \infty} \displaystyle \int_{B_n} (f_\lambda^{-1}(M)M+\frac{1}{1+\epsilon} M^{\frac{p+1}{p}}) \zeta_\theta dx  = 0.
\end{multline}
Then, from all the above estimates ranging from \eqref{notas 1} to \eqref{notas 7}, we infer that
\begin{equation}
0= \lim_{\theta\rightarrow 0} lim_{n\rightarrow \infty} \langle I_F'(u_n),u_n\zeta_\theta\rangle \geq \frac{\gamma_j}{1+\epsilon}- \nu_j
\end{equation}
which implies that $\nu_j \geq \frac{\gamma_j}{1+\epsilon}$ for all $\epsilon>0$, and hence $\nu_j \geq \gamma_j$.
In contrast, since $0\leq f_\lambda^{-1}(\Delta u_n)\Delta u_n\leq |\Delta u_n|^{\frac{p+1}{p}}$, it follows from all the above estimates ranging from \eqref{notas 1} to \eqref{notas 7} that
\begin{equation}
0= \lim_{\theta\rightarrow 0} \lim_{n\rightarrow \infty} \langle I_F'(u_n),u_n\zeta_\theta\rangle \leq  \gamma_j- \nu_j,
\end{equation}
which implies 
\begin{equation}\label{gamma=nu}
\gamma_j = \nu_j. 
\end{equation}
Then, from  Lemma \ref{convseqlim},
 $\nu_j \geq S\nu_j^{\frac{p+1}{p}\frac{1}{q+1}}$ and so 
 \begin{equation}\label{gammamaior}
 \nu_j \geq S^{\frac{pN}{2(p+1)}},
 \end{equation}
since $\frac{pN}{2(p+1)}=\left(1-\frac{1}{q+1}\frac{p+1}{p}\right)^{-1}$ and $\gamma_j>0$. Combining this with $\sum_{j\in J} \nu_j^{\frac{p+1}{p}\frac{1}{q+1}}<+\infty$, it follows that $J$ is at most finite.
\end{proof}
\begin{lem}\label{elves1}
Given a bounded sequence $(u_n)$ in $E_p$, $K \subset\subset \Omega \backslash\{ x_j: j \in J \}$ with $\{ x_j: j \in J \}$ from Lemma \ref{convseqlim}, then $u_n \rightarrow u$ in $L^{q+1}(K)$, up to a subsequence.
\end{lem}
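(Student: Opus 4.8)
The plan is to show that the weak-star limit $\nu$ of Lemma \ref{convseqlim}(iv) carries no singular mass on a neighbourhood of $K$, to deduce from this that $\int_K|u_n|^{q+1}\,dx\to\int_K|u|^{q+1}\,dx$ along the subsequence provided by Lemma \ref{convseqlim}, and finally to upgrade the pointwise convergence to strong $L^{q+1}(K)$ convergence by the Brezis--Lieb lemma.

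First I would fix a cut-off adapted to $K$. Since $K$ is compact, contained in $\Omega$, and disjoint from $\{x_j:j\in J\}$, we have $\mathrm{dist}(K,\partial\Omega)>0$ and $d_j:=\mathrm{dist}(x_j,K)>0$ for each $j\in J$. For small $\delta>0$ pick $\phi_\delta\in C^{\infty}_{c}(\Omega)$ with $0\le\phi_\delta\le1$, $\phi_\delta\equiv1$ on $K$, and $\mathrm{supp}\,\phi_\delta\subset\{x:\mathrm{dist}(x,K)<\delta\}$; when $(u_n)$ is a $(PS)$-sequence one may even keep $\delta$ fixed below $\min_j d_j$, using that $J$ is finite by Lemma \ref{Jfinito}. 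Testing the weak-star convergence of $|u_n|^{q+1}$ against $\phi_\delta$ gives $\int_\Omega\phi_\delta|u_n|^{q+1}\,dx\to\int_\Omega\phi_\delta|u|^{q+1}\,dx+\sum_{j\in J}\phi_\delta(x_j)\nu_j$, and the atomic term is $\le\sum_{j:\,d_j<\delta}\nu_j$, which tends to $0$ as $\delta\downarrow0$ because $\nu$ is a finite measure, so $\sum_{j\in J}\nu_j\le\nu(\overline\Omega)<\infty$, while each $d_j>0$.

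From $\chi_K\le\phi_\delta$ this yields $\limsup_n\int_K|u_n|^{q+1}\,dx\le\int_\Omega\phi_\delta|u|^{q+1}\,dx+\sum_{j:\,d_j<\delta}\nu_j$; letting $\delta\downarrow0$, so that $\phi_\delta\downarrow\chi_K$ pointwise (note $\overline K=K$) with dominated convergence, gives $\limsup_n\int_K|u_n|^{q+1}\,dx\le\int_K|u|^{q+1}\,dx$, while Fatou's lemma together with Lemma \ref{convseqlim}(ii) gives the reverse inequality for the $\liminf$. Hence $\int_K|u_n|^{q+1}\,dx\to\int_K|u|^{q+1}\,dx$. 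Since $(u_n)$ is bounded in $L^{q+1}(K)$ (from $E_p\hookrightarrow L^{q+1}(\Omega)$) and converges a.e.\ on $K$ to $u\in L^{q+1}(K)$, the Brezis--Lieb lemma then forces $\int_K|u_n-u|^{q+1}\,dx\to0$, that is $u_n\to u$ in $L^{q+1}(K)$.

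The only point that needs care is that a priori $J$ is merely countable, so the concentration points could accumulate towards $\partial K$; this is precisely where one uses the finiteness of the total singular mass (equivalently, that $\nu$ is a finite measure) together with $\mathrm{dist}(x_j,K)>0$ for every $j$, to make the near-$K$ singular mass vanish as the support of the cut-off shrinks onto $K$. In the setting where the lemma is actually applied, namely to $(PS)$-sequences, $J$ is finite by Lemma \ref{Jfinito} and this subtlety disappears; the remainder is routine concentration-compactness bookkeeping.
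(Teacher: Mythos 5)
Your proof is correct: the cut-off test of the weak-star limit $\nu$, the tail estimate $\sum_{j:\,\mathrm{dist}(x_j,K)<\delta}\nu_j\to0$ (valid because $\nu(\overline\Omega)<\infty$ even when $J$ is only countable), Fatou for the reverse inequality, and Brezis--Lieb to upgrade norm-plus-a.e.\ convergence to strong $L^{q+1}(K)$ convergence together give exactly the claim. The paper does not write out a proof but delegates to \cite[Lemma 3.6]{ederson}, which rests on the same concentration-compactness bookkeeping, so your argument is essentially the intended one, with the added merit of explicitly treating the possibly countable set $J$ arising for general bounded (not necessarily Palais--Smale) sequences.
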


\begin{proof} See the proof of \cite[Lemma 3.6]{ederson}.
\end{proof}

\begin{lem}\label{ineqnuk}
If $(u_n)$ is a (PS)-sequence for $I_F$, and $\{ x_j: j \in J \}$ from Lemma \ref{convseqlim}, then for every $j\in J$, up to a subsequence,
\begin{equation}\label{ps-1nuk}
\lim_{n\rightarrow \infty} \int_{\overline{\Omega}} \left[\overline{F}_\lambda(\Delta u_n)-\frac{1}{s+1} f_\lambda^{-1}(\Delta u_n) \Delta u_n \right] dx \geq \frac{ps-1}{(p+1)(s+1)} \gamma_j.
\end{equation}
\end{lem}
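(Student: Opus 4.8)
The plan is to work with the pointwise quantity $G_\lambda(t):=\overline F_\lambda(t)-\frac{1}{s+1}f_\lambda^{-1}(t)\,t$ and to localize the integral $\int_{\overline\Omega}G_\lambda(\Delta u_n)\,dx$ near the concentration point $x_j$ by testing against the cutoff functions $\zeta_\theta$ used in the proof of Lemma \ref{Jfinito}. Two pointwise facts about $G_\lambda$ are the crux. First, $G_\lambda(t)\geq0$ for every $t\in\R$: setting $\phi(t):=(s+1)G_\lambda(t)$ one has $\phi(0)=0$ and, for $t>0$, writing $w=f_\lambda^{-1}(t)$ so that $t=\lambda w^{r}+w^{p}$, the derivative $\phi'(t)$ has, after clearing the positive factor $\lambda r w^{r-1}+pw^{p-1}$, the sign of $\lambda(sr-1)w^{r}+(sp-1)w^{p}\geq0$, since $rs\geq1$ and $sp>sr\geq1$ (here $p>r$ is used); as $G_\lambda$ is even this gives $G_\lambda\geq0$ on $\R$ (this is part of Corollary \ref{corFs}). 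Second, since $f_\lambda(t)=\lambda|t|^{r-1}t+|t|^{p-1}t$ with $r<p$, we have $\overline F_\lambda(t)/|t|^{\frac{p+1}{p}}\to\frac{p}{p+1}$ and $f_\lambda^{-1}(t)t/|t|^{\frac{p+1}{p}}\to1$ as $|t|\to\infty$, hence $G_\lambda(t)/|t|^{\frac{p+1}{p}}\to\frac{p}{p+1}-\frac{1}{s+1}=\frac{ps-1}{(p+1)(s+1)}$, a positive number since $ps>rs\geq1$. Therefore, given $\varepsilon>0$ there is $M(\varepsilon)>0$ with
\[
G_\lambda(t)\geq\Big(\tfrac{ps-1}{(p+1)(s+1)}-\varepsilon\Big)|t|^{\frac{p+1}{p}}\qquad\text{for all }|t|\geq M(\varepsilon),
\]
and we take $\varepsilon$ small enough that the constant on the right is positive.

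Fix $j\in J$ and, for $\theta>0$, let $\zeta_\theta(x)=\zeta\big(\frac{x-x_j}{\theta}\big)$ with $0\leq\zeta_\theta\leq1$, $\zeta_\theta\equiv1$ on $B(x_j,\theta)$ and $\mathrm{supp}\,\zeta_\theta\subset B(x_j,2\theta)$, as in Lemma \ref{Jfinito}. Using $G_\lambda\geq0$ everywhere, then discarding the region where $|\Delta u_n|<M(\varepsilon)$ (on which $G_\lambda(\Delta u_n)\zeta_\theta\geq0$), and finally the pointwise lower bound above,
\[
\int_{\overline\Omega}G_\lambda(\Delta u_n)\,dx\;\geq\;\int_{\overline\Omega}G_\lambda(\Delta u_n)\,\zeta_\theta\,dx\;\geq\;\Big(\tfrac{ps-1}{(p+1)(s+1)}-\varepsilon\Big)\int_{\{|\Delta u_n|\geq M(\varepsilon)\}}|\Delta u_n|^{\frac{p+1}{p}}\zeta_\theta\,dx,
\]
and the last integral is at least $\int_{\overline\Omega}|\Delta u_n|^{\frac{p+1}{p}}\zeta_\theta\,dx-M(\varepsilon)^{\frac{p+1}{p}}|B(x_j,2\theta)|$. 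Letting $n\to\infty$ and using $|\Delta u_n|^{\frac{p+1}{p}}\stackrel{*}{\rightharpoonup}\gamma$ from Lemma \ref{convseqlim}(iii) tested against the continuous function $\zeta_\theta$,
\[
\liminf_{n\to\infty}\int_{\overline\Omega}G_\lambda(\Delta u_n)\,dx\;\geq\;\Big(\tfrac{ps-1}{(p+1)(s+1)}-\varepsilon\Big)\Big(\int_{\overline\Omega}\zeta_\theta\,d\gamma-M(\varepsilon)^{\frac{p+1}{p}}|B(x_j,2\theta)|\Big).
\]
Since $\zeta_\theta\equiv1$ on $B(x_j,\theta)\ni x_j$ we have $\int_{\overline\Omega}\zeta_\theta\,d\gamma\geq\gamma(\{x_j\})\geq\gamma_j$ for every $\theta>0$, while $|B(x_j,2\theta)|\to0$ as $\theta\to0$; letting $\theta\to0$ and then $\varepsilon\to0$ gives $\liminf_{n}\int_{\overline\Omega}G_\lambda(\Delta u_n)\,dx\geq\frac{ps-1}{(p+1)(s+1)}\gamma_j$.

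To replace $\liminf$ by $\lim$ (up to the subsequence of Lemma \ref{convseqlim}), recall from the identity used in Lemma \ref{psltd} that $I_F(u_n)-\frac{1}{s+1}I_F'(u_n)u_n=\int_{\overline\Omega}G_\lambda(\Delta u_n)\,dx+\frac{q-s}{(s+1)(q+1)}|u_n|_{q+1}^{q+1}$; the left-hand side converges because $(u_n)$ is a bounded (PS) sequence, and $|u_n|_{q+1}^{q+1}\to\nu(\overline\Omega)$ by Lemma \ref{convseqlim}(iv), so $\int_{\overline\Omega}G_\lambda(\Delta u_n)\,dx$ converges, which yields \eqref{ps-1nuk}. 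The delicate point of the argument is obtaining the sharp constant $\frac{ps-1}{(p+1)(s+1)}$: the crude bound $G_\lambda(t)\geq\tau|t|^{\frac{p+1}{p}}$ available from Lemma \ref{psltd} does not suffice, so one genuinely needs the precise asymptotics $\overline F_\lambda(t)\sim\frac{p}{p+1}|t|^{\frac{p+1}{p}}$ and $f_\lambda^{-1}(t)t\sim|t|^{\frac{p+1}{p}}$ at infinity (i.e. Corollary \ref{corFs}), together with the global nonnegativity of $G_\lambda$, which is what makes the localization by $\zeta_\theta$ (discarding everything outside a small ball around $x_j$) legitimate.
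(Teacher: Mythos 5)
Your proof is correct and follows essentially the same route as the paper's: both use the nonnegativity of $G_\lambda(t)=\overline F_\lambda(t)-\frac{1}{s+1}f_\lambda^{-1}(t)t$ (Corollary \ref{corFs}) to localize near $x_j$ with the cutoff $\zeta_\theta$, and the asymptotic $G_\lambda(t)\sim\frac{ps-1}{(p+1)(s+1)}|t|^{(p+1)/p}$ as $|t|\to\infty$ (the paper derives it from Lemma \ref{Ffeq}, you compute it directly), together with the weak* convergence $|\Delta u_n|^{(p+1)/p}\stackrel{*}{\rightharpoonup}\gamma$. The only noteworthy addition is your final paragraph justifying that the $\liminf$ can be upgraded to a genuine limit via the identity $I_F(u_n)-\frac{1}{s+1}I_F'(u_n)u_n=\int_{\overline\Omega}G_\lambda(\Delta u_n)\,dx+\frac{q-s}{(s+1)(q+1)}|u_n|_{q+1}^{q+1}$, a detail the paper leaves implicit but which is needed to match the ``$\lim$'' in \eqref{ps-1nuk}.
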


\begin{proof}
Consider the even function $H_p(t):=|t|^{-\frac{p+1}{p}}\left(\overline{F}_\lambda(t)-\frac{1}{s+1}f_\lambda^{-1}(t)t\right)$. By  Lemma \ref{Ffeq},
\begin{equation}
\lim_{t\rightarrow \infty} H_p (t)= \lim_{t\rightarrow \infty}\left[ \frac{ps-1}{(p+1)(s+1)}\frac{f_\lambda^{-1}(t)}{t^{1/p}}-\frac{p-r}{p+1}\frac{\lambda}{r+1}\frac{|f^{-1}_\lambda(t)|^{r+1}}{t^\frac{p+1}{p}}\right]=\frac{ps-1}{(p+1)(s+1)}.
\end{equation}

Then, given $\epsilon>0$ small, there exists $t_0>0$ such that
$$H_p (t)> c_{\epsilon} >0 \ \ \ \ \ \text{for all } |t|>t_0,$$
with $c_{\epsilon} = \frac{ps-1}{(p+1)(s+1)}-\epsilon$, which implies that
\begin{equation}\label{ps-1nuk2}
\overline{F}_\lambda(t)-\frac{1}{s+1} f_\lambda^{-1}(t)t > c_{\epsilon}|t|^{\frac{p+1}{p}} \ \ \ \ \ \text{for all } |t|>t_0.
\end{equation}
Let $A_n$, $B_n$ and $\zeta_\theta$ be like in Lemma \ref{Jfinito}, with $A_n$ and $B_n$ associated with $t_0$. So, from \eqref{ps-1nuk2} and Corollary \ref{corFs}, 
\begin{multline}\label{parte1eq}
\displaystyle\int_{\overline{\Omega}} \overline{F}_\lambda(\Delta u_n)-\frac{1}{s+1} f_\lambda^{-1}(\Delta u_n) \Delta u_n dx 
\geq\displaystyle c_{\epsilon} \left(\displaystyle \int_{A_n} |\Delta u_n|^{\frac{p+1}{p}} \zeta_{\theta} dx+
\displaystyle \int_{B_n}|\Delta u_n|^{\frac{p+1}{p}} \zeta_\theta dx\right) \\
+ \displaystyle \int_{B_n}\left[ \overline{F}_\lambda(\Delta u_n)-\frac{1}{s+1} f_\lambda^{-1}(\Delta u_n) \Delta u_n - c_{\epsilon}|\Delta u_n|^{\frac{p+1}{p}} \right]\zeta_\theta dx\\
= \displaystyle c_{\epsilon} \displaystyle \int_{\overline\Omega} |\Delta u_n|^{\frac{p+1}{p}} \zeta_{\theta} dx + \displaystyle \int_{B_n}\left[ \overline{F}_\lambda(\Delta u_n)-\frac{1}{s+1} f_\lambda^{-1}(\Delta u_n) \Delta u_n - c_{\epsilon}|\Delta u_n|^{\frac{p+1}{p}} \right]\zeta_\theta dx.
\end{multline}
On the other hand,
\begin{multline}\label{parte2eq}
\lim_{\theta \to 0}\limsup_{n\rightarrow \infty}\left| \displaystyle \int_{B_n}\left[ \overline{F}_\lambda(\Delta u_n)-\frac{1}{s+1} f_\lambda^{-1}(\Delta u_n) \Delta u_n - c_{\epsilon}|\Delta u_n|^{\frac{p+1}{p}}  \right]\zeta_\theta dx\right|\\ 
\leq \lim_{\theta \to 0}\limsup_{n\rightarrow \infty} \displaystyle \int_{\overline\Omega}\left[ \overline{F}_\lambda(t_0)+\frac{1}{s+1} f_\lambda^{-1}(t_0) t_0 + c_{\epsilon}|t_0|^{\frac{p+1}{p}}  \right]\zeta_\theta dx =0,
\end{multline}
and, from  Lemma \ref{convseqlim},
\begin{equation}\label{parte3eq}
\displaystyle \lim_{\theta \to 0}\limsup_{n\rightarrow \infty}c_{\epsilon} \displaystyle \int_{\overline\Omega} |\Delta u_n|^{\frac{p+1}{p}} \zeta_{\theta} dx\geq c_\epsilon\gamma_k.
\end{equation}
From \eqref{parte1eq}, \eqref{parte2eq},  \eqref{parte3eq} and the arbitrariness of $\epsilon>0$, we get the desired inequality.
\end{proof}
\begin{lem}
If $(u_n)$ is a (PS)-sequence for $I_F$, $K \subset\subset \Omega \backslash\{ x_j: j \in J \}$ with $\{ x_j: j \in J \}$ from Lemma \ref{convseqlim}, then up to a subsequence,
\begin{equation}\label{integrand}
\displaystyle \int_K (f_\lambda^{-1}(\Delta u_n)-f_\lambda^{-1}(\Delta u))(\Delta u_n - \Delta u)dx \xrightarrow{n\rightarrow \infty}0.
\end{equation}
\end{lem}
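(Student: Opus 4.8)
The plan is to follow the classical Brezis--Nirenberg compactness argument adapted to the reduction-by-inversion setting, exploiting the concentration-compactness structure already established in Lemmas \ref{convseqlim}--\ref{elves1}. Since the singular set $\{x_j : j \in J\}$ is finite and $K \subset\subset \Omega \backslash \{x_j : j \in J\}$ is compact, we may cover $K$ by finitely many balls staying away from the $x_j$'s; hence it suffices to prove the convergence on a fixed ball $B$ with $\overline{B} \subset \Omega \backslash \{x_j : j \in J\}$. On such a $B$, Lemma \ref{elves1} gives $u_n \to u$ strongly in $L^{q+1}(B)$, and by Lemma \ref{convseqlim}(ii),(vii) also $u_n \to u$ in $L^{s+1}(B)$ and $\nabla u_n \to \nabla u$ in $(L^\sigma(B))^N$ for suitable $\sigma$. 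The key quantity to control is the integrand in \eqref{integrand}, which is pointwise nonnegative because $f_\lambda^{-1}$ is monotone increasing (being the inverse of the strictly increasing function $f_\lambda$); so it is enough to show that its integral over $B$ tends to $0$.

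The main step is to test the equation $I_F'(u_n) \to 0$ against $\varphi(u_n - u)$ for a cutoff $\varphi \in C_c^\infty(\Omega)$ with $\varphi \equiv 1$ on $B$, $0 \le \varphi \le 1$, and $\mathrm{supp}\,\varphi \subset \subset \Omega \backslash \{x_j : j \in J\}$. Expanding $\langle I_F'(u_n), \varphi(u_n-u)\rangle$ produces the principal term $\int_\Omega f_\lambda^{-1}(\Delta u_n)\,\Delta(\varphi(u_n-u))\,dx$ plus lower-order terms $-\mu \int |u_n|^{s-1}u_n \varphi(u_n-u) - \int |u_n|^{q-1}u_n \varphi(u_n-u)$. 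The lower-order terms vanish as $n \to \infty$: the $L^{q+1}$ term by Hölder together with the strong $L^{q+1}$ convergence on the support of $\varphi$ (Lemma \ref{elves1}) and boundedness of $(|u_n|^q)$ in $L^{(q+1)/q}$, and the $L^{s+1}$ term similarly using the compact embedding $E_p \hookrightarrow L^{s+1}(\Omega)$. Expanding $\Delta(\varphi(u_n-u)) = \varphi \Delta(u_n-u) + 2\nabla\varphi\cdot\nabla(u_n-u) + (u_n-u)\Delta\varphi$, the last two pieces are controlled by \eqref{ineqinv} (giving $|f_\lambda^{-1}(\Delta u_n)| \le |\Delta u_n|^{1/p}$, which is bounded in $L^{p+1}$) and Hölder, combined with $\nabla(u_n-u)\to 0$ in $L^\sigma$ and $u_n-u\to 0$ in $L^{q+1}$ on $\mathrm{supp}\,\varphi$. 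Since $f_\lambda^{-1}(\Delta u)\varphi \in L^{p+1}(\Omega)$ and $\Delta(u_n-u) \rightharpoonup 0$ in $L^{(p+1)/p}$, we may subtract $\int_\Omega f_\lambda^{-1}(\Delta u)\,\varphi\,\Delta(u_n-u)\,dx = o(1)$, and conclude
\[
\int_\Omega \varphi\,\bigl(f_\lambda^{-1}(\Delta u_n)-f_\lambda^{-1}(\Delta u)\bigr)(\Delta u_n-\Delta u)\,dx = o(1).
\]
Because $\varphi \equiv 1$ on $B$ and the integrand is nonnegative everywhere, the integral over $B$ is squeezed to $0$, which after summing over the finite cover yields \eqref{integrand}.

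The step I expect to be the main obstacle is the careful handling of the term $\int f_\lambda^{-1}(\Delta u_n)\,\varphi\,\Delta u\,dx$ and its partner $\int f_\lambda^{-1}(\Delta u)\,\varphi\,\Delta u_n\,dx$: here one cannot use strong convergence of $\Delta u_n$, only weak convergence in $L^{(p+1)/p}$, so one must verify that the relevant test functions ($f_\lambda^{-1}(\Delta u)\varphi$, and $\varphi \Delta u$ paired against $f_\lambda^{-1}(\Delta u_n)$ which is bounded in $L^{p+1}$ by \eqref{ineqinv}) genuinely lie in the dual spaces and that the cross terms cancel correctly in the limit. The non-power nature of $f_\lambda$ prevents a direct appeal to the pure-power estimates used in \cite{ederson}, but the sharp bounds on $f_\lambda^{-1}$ from Section \ref{sectiontec} (in particular \eqref{ineqinv}) are exactly what is needed to push the argument through; the monotonicity of $f_\lambda^{-1}$ guaranteeing pointwise nonnegativity of the integrand is what makes the localization to $B$ legitimate.
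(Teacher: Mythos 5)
Your argument is correct and is essentially the paper's own proof: testing $I_F'(u_n)$ against $\varphi(u_n-u)$ is, by linearity, the same as the paper's subtraction of the two tests $\langle I_F'(u_n),\xi_\theta u_n\rangle$ and $\langle I_F'(u_n),\xi_\theta u\rangle$, and you use the same ingredients (monotonicity of $f_\lambda^{-1}$ for localization, Lemma \ref{elves1} and the compact embeddings for the lower-order and commutator terms, the bound \eqref{ineqinv} giving $f_\lambda^{-1}(\Delta u_n)$ bounded in $L^{p+1}$, and weak convergence of $\Delta u_n$ paired with $f_\lambda^{-1}(\Delta u)\varphi\in L^{p+1}$).
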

\begin{proof} Let $\delta = dist(K, \{ x_j: j \in J\})$. For each $\theta \in (0, \delta)$, consider $A_{\theta} = \{ x \in \Omega: dist(x, K)< \theta\}$ and $\xi_\theta \in C_c^{\infty}(\Omega)$, $0\leq \xi_{\theta} \leq 1$, $\xi_{\theta}\equiv 1$ on $A_{\theta/2}$ and $\xi_{\theta} \equiv 0$ on $\Omega \backslash A_{\theta}$. So, by the monotonicity of $f_\lambda^{-1}$,
\begin{multline}\label{integrand1}
0 \leq \displaystyle \int_K (f_\lambda^{-1}(\Delta u_n)-f_\lambda^{-1}(\Delta u))(\Delta u_n - \Delta u)dx \leq \displaystyle \int_\Omega (f_\lambda^{-1}(\Delta u_n)-f_\lambda^{-1}(\Delta u))(\Delta u_n - \Delta u)\xi_\theta dx \\ 
=\displaystyle \int_\Omega f_\lambda^{-1}(\Delta u_n)\Delta u_n \xi_\theta - f_\lambda^{-1} (\Delta u_n) \Delta u \xi_\theta - f_\lambda^{-1}(\Delta u)(\Delta u_n - \Delta u) \xi_\theta dx.
\end{multline}

\noindent
Fixing $\theta>0$, since $I_F'(u_n)\rightarrow 0$ and $(u_n \xi_\theta)$ is bounded in $E$, then $\langle I_F'(u_n), \xi_\theta u\rangle=o(1)$ and $\langle I_F'(u_n), \xi_\theta u_n\rangle=o(1)$ that is
\begin{equation}\label{integrand2}
o(1)\!=\!\!\displaystyle \int_{\overline{\Omega}} f_\lambda^{-1}(\Delta u_n)(\Delta u \xi_\theta  + u \Delta \xi_\theta + 2 \nabla u \nabla \xi_\theta) dx -\!\! \displaystyle \int_{\overline{\Omega}} |u_n|^{q-1}u_n \xi_\theta u dx - \mu\! \!\displaystyle \int_{\overline{\Omega}} |u_n|^{s-1} u_n \xi_\theta u dx
\end{equation}
\begin{equation}\label{integrand3}
o(1)\!=\!\!\displaystyle \int_{\overline{\Omega}} f_\lambda^{-1}(\Delta u_n)(\Delta u_n \xi_\theta  + u_n \Delta \xi_\theta + 2 \nabla u_n \nabla \xi_\theta) dx - \!\displaystyle \int_{\overline{\Omega}} |u_n|^{q+1} \xi_\theta dx - \mu \!\displaystyle \int_{\overline{\Omega}} |u_n|^{s+1} \xi_\theta dx.
\end{equation}
From \eqref{integrand1}, \eqref{integrand2}, \eqref{integrand3}, Lemma \ref{convseqlim}, Lemma \ref{elves1} and \cite[Lemma 2.4]{ederson}, it follows that 
\begin{multline}
0 \leq \displaystyle \int_K (f_\lambda^{-1}(\Delta u_n)-f_\lambda^{-1}(\Delta u))(\Delta u_n - \Delta u)dx 
\leq \displaystyle \int_\Omega f_\lambda^{-1}(\Delta u_n)\Delta\xi_\theta (u_n - u)dx\\
 +2\displaystyle \int_\Omega f_\lambda^{-1}(\Delta u_n)  \nabla \xi_\theta \nabla (u_n -u)dx-\displaystyle \int_\Omega f_\lambda^{-1}(\Delta u) (\Delta u_n - \Delta u)\xi_\theta dx +o(1) \\
 \leq \displaystyle C\left(\int_{A_\theta} |u_n-u|^{q+1}dx\right)^{\frac{1}{q+1}}+C\left(\int_{\Omega} |\nabla u_n-\nabla u|^{\frac{p+1}{p}}dx\right)^{\frac{p}{p+1}}\\
 -\displaystyle \int_\Omega f_\lambda^{-1}(\Delta u) (\Delta u_n - \Delta u)\xi_\theta dx +o(1)= o(1). \qedhere
\end{multline}

\end{proof}
\begin{lem}
If $(u_n)$ is a (PS)-sequence for $I_F$, then $\Delta u_n \xrightarrow{n\rightarrow \infty} \Delta u$ a.e. in $\Omega$, up to a subsequence.
\end{lem}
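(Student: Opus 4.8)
The plan is to upgrade the monotonicity inequality recorded in \eqref{integrand} to pointwise convergence, using the strict monotonicity and the growth of $f_\lambda^{-1}$ established in Section~\ref{sectiontec}. For $c\in\R$ set
$$
\phi_c(t):=\bigl(f_\lambda^{-1}(t)-f_\lambda^{-1}(c)\bigr)(t-c),\qquad t\in\R .
$$
Since $f_\lambda$ is a strictly increasing bijection of $\R$, so is $f_\lambda^{-1}$, and hence $\phi_c$ is continuous, nonnegative, and vanishes exactly at $t=c$. Moreover, by the sharp estimates on $f_\lambda^{-1}$ (in particular $f_\lambda^{-1}(t)$ behaves like $|t|^{1/p}$ as $|t|\to\infty$), one has $\phi_c(t)\to+\infty$ as $|t|\to\infty$; thus, for each fixed $c$, every sublevel set $\{t:\phi_c(t)\le\delta\}$ is bounded and these sets shrink to $\{c\}$ as $\delta\downarrow 0$. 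Consequently, for a fixed $c$, a sequence $(t_n)$ with $\phi_c(t_n)\to 0$ must satisfy $t_n\to c$.

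First I would fix an exhaustion $K_1\subset\subset K_2\subset\subset\cdots$ of $\Omega\setminus\{x_j:j\in J\}$ by compact sets with $\bigcup_m K_m=\Omega\setminus\{x_j:j\in J\}$; this is an open subset of full measure in $\Omega$, because $J$ is at most finite by Lemma~\ref{Jfinito}. For each fixed $m$, the preceding lemma (see \eqref{integrand} applied with $K=K_m$) gives
$$
\int_{K_m}\bigl(f_\lambda^{-1}(\Delta u_n)-f_\lambda^{-1}(\Delta u)\bigr)(\Delta u_n-\Delta u)\,dx\xrightarrow{\,n\to\infty\,}0 ,
$$
and since the integrand is nonnegative by the monotonicity of $f_\lambda^{-1}$, it converges to $0$ in $L^1(K_m)$, hence to $0$ a.e.\ on $K_m$ along a subsequence. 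In other words $\phi_{\Delta u(x)}(\Delta u_n(x))\to 0$ for a.e.\ $x\in K_m$; since $\Delta u\in L^{\frac{p+1}{p}}(\Omega)$ is finite a.e., the coercivity property of $\phi_c$ noted above forces $\Delta u_n(x)\to\Delta u(x)$ for a.e.\ $x\in K_m$.

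Finally I would run a standard diagonal argument: choosing nested subsequences so that the $m$-th one satisfies $\Delta u_n\to\Delta u$ a.e.\ on $K_m$, and passing to the diagonal subsequence, we obtain $\Delta u_n\to\Delta u$ a.e.\ on $\bigcup_m K_m=\Omega\setminus\{x_j:j\in J\}$, hence a.e.\ on $\Omega$. I do not expect a real obstacle here; the only point requiring care is excluding the escape $\Delta u_n(x)\to\pm\infty$ along a subsequence at a fixed $x$, which is precisely what the coercivity of $\phi_c$—coming from the asymptotic behaviour of $f_\lambda^{-1}$—rules out.
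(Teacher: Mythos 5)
Your proof is correct and follows the same skeleton as the paper's: extract from \eqref{integrand} a subsequence along which the nonnegative integrand $\bigl(f_\lambda^{-1}(\Delta u_n)-f_\lambda^{-1}(\Delta u)\bigr)(\Delta u_n-\Delta u)$ tends to zero a.e.\ on compact subsets of $\Omega\setminus\{x_j:j\in J\}$, upgrade this to $\Delta u_n\to\Delta u$ a.e.\ there, and use that this set has full measure (Lemma \ref{Jfinito}). The one genuine difference is the pointwise step: the paper invokes the Dal Maso--Murat lemma (\cite[Lemma 6]{masomurat}, stated for sequences of monotone maps $\beta_n\to\beta$), whereas you prove the needed implication directly, noting that $\phi_c(t)=(f_\lambda^{-1}(t)-f_\lambda^{-1}(c))(t-c)$ is continuous, vanishes only at $t=c$, and is coercive, so $\phi_c(t_n)\to0$ forces $t_n\to c$. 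This buys a self-contained, elementary argument (and in fact you do not even need the growth $f_\lambda^{-1}(t)\sim|t|^{1/p}$: for any strictly increasing continuous $f_\lambda^{-1}$ one has $\phi_c(t)\ge\bigl(f_\lambda^{-1}(c+1)-f_\lambda^{-1}(c)\bigr)(t-c)\to\infty$ as $t\to+\infty$, and similarly as $t\to-\infty$, so coercivity is automatic), while the paper's citation is shorter and would also cover varying nonlinearities $\beta_n$. Your explicit exhaustion-plus-diagonal extraction is also a slightly more careful rendering of the paper's ``since $K$ is arbitrary'' conclusion; both are fine since $J$ is finite.
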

\begin{proof}
Let $K\subset\subset \Omega\backslash\{x_j\}_{j \in J}$. By the inverse of the Lebesgue dominated convergence theorem, there exists a subsequence of the integrand in \eqref{integrand} that converges a.e. in $K$. Using \cite[Lemma 6]{masomurat} with $$X=\R,\ \beta_n=f_\lambda^{-1}, \ \beta=f_\lambda^{-1}, \ \text{and } \xi_n=\Delta u_n(x)$$
we get $\Delta u_n \rightarrow \Delta u \ a.e. \ in \ K$. Since $K$ is an arbitrary compact subset of $\Omega\backslash\{x_j\}_{j \in J}$, we conclude that $\Delta u_n \rightarrow \Delta u \ a.e. \ in \ \Omega$.
\end{proof}

\begin{lem}
If $(u_n)$ is a (PS)-sequence for $I_F$, then $f_\lambda^{-1}(\Delta u_n) \rightharpoonup f_\lambda^{-1}(\Delta u) \text{ in } L^{p+1}(\Omega)$, up to a subsequence.\end{lem}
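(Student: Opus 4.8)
The plan is to show first that $\big(f_\lambda^{-1}(\Delta u_n)\big)$ is bounded in $L^{p+1}(\Omega)$ and then to identify its weak limit by means of the a.e.\ convergence $\Delta u_n\to\Delta u$ obtained in the preceding lemma. By Lemma~\ref{psltd} the sequence $(u_n)$ is bounded in $E_p$, so $(\Delta u_n)$ is bounded in $L^{\frac{p+1}{p}}(\Omega)$. From the pointwise bound $0\le f_\lambda^{-1}(t)\,t\le |t|^{\frac{p+1}{p}}$ (see \eqref{ineqinv}, already used in the proof of Lemma~\ref{Jfinito}), dividing by $|t|$ when $t\neq0$, one gets $|f_\lambda^{-1}(t)|\le |t|^{1/p}$ for all $t\in\R$, whence
\[
\int_\Omega \big|f_\lambda^{-1}(\Delta u_n)\big|^{p+1}\,dx \;\le\; \int_\Omega |\Delta u_n|^{\frac{p+1}{p}}\,dx \;=\; \|u_n\|^{\frac{p+1}{p}}
\]
is bounded uniformly in $n$; the same estimate with $u$ in place of $u_n$ (recall $u\in E_p$) shows $f_\lambda^{-1}(\Delta u)\in L^{p+1}(\Omega)$, so the asserted weak limit makes sense.

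Next, since $p+1>1$ the space $L^{p+1}(\Omega)$ is reflexive, so along a subsequence $f_\lambda^{-1}(\Delta u_n)\rightharpoonup w$ in $L^{p+1}(\Omega)$ for some $w$. Passing to a further subsequence, the preceding lemma gives $\Delta u_n\to\Delta u$ a.e.\ in $\Omega$, and continuity of $f_\lambda^{-1}$ then gives $f_\lambda^{-1}(\Delta u_n)\to f_\lambda^{-1}(\Delta u)$ a.e.\ in $\Omega$. It remains to check $w=f_\lambda^{-1}(\Delta u)$, i.e.\ that a.e.\ convergence together with the uniform $L^{p+1}$-bound forces weak $L^{p+1}$-convergence to the a.e.\ limit. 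I would do this via Egorov's theorem: given $\varphi\in L^{\frac{p+1}{p}}(\Omega)$ and $\varepsilon>0$, since $|\Omega|<\infty$ there is a measurable $A\subset\Omega$ with $|\Omega\setminus A|$ as small as we please on which the convergence is uniform, so $\int_A f_\lambda^{-1}(\Delta u_n)\varphi\,dx\to\int_A f_\lambda^{-1}(\Delta u)\varphi\,dx$, while by Hölder's inequality and the uniform bound
\[
\left|\int_{\Omega\setminus A} f_\lambda^{-1}(\Delta u_n)\varphi\,dx\right| \;\le\; \Big(\sup_n\|f_\lambda^{-1}(\Delta u_n)\|_{L^{p+1}(\Omega)}\Big)\,\|\varphi\|_{L^{\frac{p+1}{p}}(\Omega\setminus A)},
\]
which is small by absolute continuity of the integral. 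Letting $n\to\infty$ and then $|\Omega\setminus A|\to0$ gives $\int_\Omega f_\lambda^{-1}(\Delta u_n)\varphi\,dx\to\int_\Omega f_\lambda^{-1}(\Delta u)\varphi\,dx$ for every such $\varphi$, hence $w=f_\lambda^{-1}(\Delta u)$ by uniqueness of weak limits, which is the claim (along the extracted subsequence).

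The argument is essentially routine; the only mildly delicate point is this last passage from a.e.\ convergence to weak $L^{p+1}$-convergence, and it is exactly there that $p+1>1$ (reflexivity of $L^{p+1}(\Omega)$) and the finiteness of $|\Omega|$ enter. Everything else is an immediate consequence of the boundedness of the (PS) sequence (Lemma~\ref{psltd}) and the elementary growth bound $|f_\lambda^{-1}(t)|\le|t|^{1/p}$.
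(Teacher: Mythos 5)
Your proof is correct and follows essentially the same route as the paper: the growth bound $|f_\lambda^{-1}(t)|\le |t|^{1/p}$ from \eqref{ineqinv} plus boundedness of $(\Delta u_n)$ in $L^{\frac{p+1}{p}}(\Omega)$ gives a uniform $L^{p+1}$ bound, the a.e.\ convergence of $\Delta u_n$ from the preceding lemma gives a.e.\ convergence of $f_\lambda^{-1}(\Delta u_n)$, and weak convergence to the a.e.\ limit follows. The only difference is that you spell out, via reflexivity and Egorov's theorem, the standard fact (bounded in $L^{p+1}$ plus a.e.\ convergence implies weak convergence to the pointwise limit) that the paper invokes implicitly with ``hence''.
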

\begin{proof}
Since, up to a subsequence,
\begin{equation}
\left\{
\begin{array}{lll}
\Delta u_n \xrightarrow{n\rightarrow \infty} \Delta u \ \ \ \ \text{a.e. in }\Omega, \\ (\Delta u_n)\text{ is bounded in } L^{\frac{p+1}{p}}(\Omega), \text{ and}\qquad\ \\
|f_\lambda^{-1}(\Delta u_n)|\leq |\Delta u_n|^{1/p},
 \end{array}
 \right.
\end{equation}
we infer that $f_\lambda^{-1}(\Delta u_n) \xrightarrow{n\rightarrow \infty} f_\lambda^{-1}(\Delta u)$ a.e. in $\Omega$,  $(f_\lambda^{-1}(\Delta u_n))$ is bounded in $L^{p+1}(\Omega)$,  and hence $f_\lambda^{-1}(\Delta u_n) \rightharpoonup f_\lambda^{-1}(\Delta u) \text{ in } L^{p+1}(\Omega)$.
 \end{proof}
\begin{prop}\label{propfracasol}
If $(u_n)$ is a $(PS)$-sequence for $I_F$, then there exist a subsequence, still denoted by $(u_n)$, such that $u_n\rightharpoonup u$ in $E_p$ and $u$ is a weak solution of \eqref{prob}.
\end{prop}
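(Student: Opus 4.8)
The plan is to pass to the limit in the weak formulation $\langle I_F'(u_n),\varphi\rangle = o(1)$ for every $\varphi\in E_p$, using the convergence facts already assembled in the previous lemmas. Recall that, for a $(PS)$-sequence $(u_n)$, Lemma \ref{psltd} gives boundedness, and then up to a subsequence we have (from Lemma \ref{convseqlim}, Lemma \ref{Jfinito}, and the three lemmas preceding this proposition): $u_n\rightharpoonup u$ in $E_p$, $u_n\to u$ a.e.\ and in $L^\theta(\Omega)$ for all $\theta<q+1$, $\nabla u_n\to\nabla u$ a.e.\ and in $(L^\sigma(\Omega))^N$ for $\sigma$ slightly bigger than $\frac{p+1}{p}$, $\Delta u_n\to\Delta u$ a.e.\ in $\Omega$, and $f_\lambda^{-1}(\Delta u_n)\rightharpoonup f_\lambda^{-1}(\Delta u)$ in $L^{p+1}(\Omega)$. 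The weak formulation reads
\[
\langle I_F'(u_n),\varphi\rangle = \int_\Omega f_\lambda^{-1}(\Delta u_n)\,\Delta\varphi\,dx - \int_\Omega |u_n|^{q-1}u_n\,\varphi\,dx - \mu\int_\Omega |u_n|^{s-1}u_n\,\varphi\,dx = o(1),
\]
and the goal is to show each term converges to the corresponding term with $u_n$ replaced by $u$.

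First I would handle the first term: since $\Delta\varphi\in L^{\frac{p+1}{p}}(\Omega)$ and $f_\lambda^{-1}(\Delta u_n)\rightharpoonup f_\lambda^{-1}(\Delta u)$ in $L^{p+1}(\Omega)$ (the conjugate exponent), this term converges to $\int_\Omega f_\lambda^{-1}(\Delta u)\Delta\varphi\,dx$ directly by the definition of weak convergence. For the lower-order term $\mu\int_\Omega |u_n|^{s-1}u_n\varphi\,dx$, the embedding $E_p\hookrightarrow\hookrightarrow L^{s+1}(\Omega)$ is compact (since $s<q$, so $s+1<q+1$), hence $|u_n|^{s-1}u_n\to|u|^{s-1}u$ strongly in $L^{\frac{s+1}{s}}(\Omega)$ and testing against $\varphi\in L^{s+1}(\Omega)$ passes to the limit. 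The delicate term is $\int_\Omega |u_n|^{q-1}u_n\varphi\,dx$, which sits at the critical exponent: $|u_n|^{q-1}u_n$ is bounded in $L^{\frac{q+1}{q}}(\Omega)$ and converges a.e.\ to $|u|^{q-1}u$, so by the standard Brezis–Lieb / Vitali-type argument (e.g.\ the lemma on weak convergence of bounded sequences that converge a.e.) one gets $|u_n|^{q-1}u_n\rightharpoonup|u|^{q-1}u$ in $L^{\frac{q+1}{q}}(\Omega)$; pairing with $\varphi\in E_p\hookrightarrow L^{q+1}(\Omega)$ then yields convergence. Combining the three limits gives $\langle I_F'(u),\varphi\rangle=0$ for all $\varphi\in E_p$, i.e.\ $u$ is a weak solution of \eqref{prob}.

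The main obstacle is the critical term: a priori $|u_n|^{q-1}u_n\varphi$ is not uniformly integrable against arbitrary $\varphi\in E_p$ because mass can concentrate at the singular points $\{x_j\}_{j\in J}$. The resolution is that we only need \emph{weak} convergence of $|u_n|^{q-1}u_n$ in $L^{\frac{q+1}{q}}(\Omega)$, not strong convergence, and weak convergence is guaranteed by boundedness plus a.e.\ convergence alone — the concentration defect $\sum_j\nu_j\delta_{x_j}$ does not obstruct testing against an $L^{q+1}$ function, it would only obstruct passing to the limit in the energy itself. So no further compactness beyond what Lemma \ref{convseqlim} and the a.e.\ convergence of $\Delta u_n$ already provide is needed here; the concentration is dealt with later (in Proposition \ref{propcompacidade}) when one shows $J=\emptyset$ below the critical level. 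I would also remark that one may equivalently argue by testing with $\varphi=u_n$ and $\varphi$ fixed and using that $u$ is the weak limit, but the direct argument above is cleanest.
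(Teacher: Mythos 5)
Your proposal is correct and follows essentially the same route as the paper: boundedness from Lemma \ref{psltd}, then weak convergence of $f_\lambda^{-1}(\Delta u_n)$ in $L^{p+1}$, strong convergence of the subcritical term via compact embedding, and weak convergence of the critical term $|u_n|^{q-1}u_n$ in $L^{(q+1)/q}$ from boundedness plus a.e.\ convergence, finally passing to the limit in $\langle I_F'(u_n),\varphi\rangle$. The paper's proof is terser (it just lists the three convergences), but the content, including the observation that weak rather than strong convergence of the critical term suffices to test against $\varphi\in E_p\hookrightarrow L^{q+1}(\Omega)$, is the same.
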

\begin{proof}
First, since $(u_n)$ is a $(PS)$-sequence to $I_F$, $\langle I_F'(u_n), w \rangle \rightarrow 0$, for all $w \in E_p$. On the other hand, up to a subsequence, 
\begin{equation}
\left\{
\begin{array}{lll}
f_\lambda^{-1}(\Delta u_n) \rightharpoonup f_\lambda^{-1}(\Delta u) \text{ in } L^{p+1}(\Omega),\\
|u_n|^{q-1}u_n \rightharpoonup |u|^{q-1}u \text{ in } L^{\frac{q+1}{q}}(\Omega)\text{ and}\\ |u_n|^{s-1}u_n \rightarrow |u|^{s-1}u \text{ in } L^{\frac{s+1}{s}}(\Omega).
 \end{array}
 \right.
\end{equation}
Thus, for all $w \in E_p$, $\langle I_F'(u_n), w \rangle \rightarrow \langle I_F'(u), w \rangle$. Then $\langle I_F'(u), w \rangle=0$, for all $w \in E_p$, that is, $u$ is a weak solution of \eqref{prob}.
\end{proof}

\begin{proof}[\textbf{Proof of Proposition \ref{propcompacidade}}]
Let $(u_n)$ be a $(PS)_c$ sequence for $I_F$ with  $c<\frac{2}{N} S^{\frac{pN}{2(p+1)}}$.  By contradiction, suppose that $J\neq \emptyset$. We can  suppose the assertions of Lemma \ref{Jfinito}, with $\gamma_j = \nu_j$,  $\nu_j \geq S^{\frac{pN}{2(p+1)}}$, for all $j \in J$, by \eqref{gamma=nu} and \eqref{gammamaior}. Since $(u_n)$ is bounded in $E_p$, $\langle I_F'(u_n), u_n \rangle=o(1)$, and Lemmas \ref{convseqlim} and \ref{ineqnuk}, we infer that
\begin{multline}
c=\lim_{n\rightarrow \infty} I_F(u_n)- \frac{1}{s+1} \langle I_F'(u_n), u_n \rangle \\
= \lim_{n\rightarrow \infty} \displaystyle \int_{\overline{\Omega}} \overline{F}_\lambda(\Delta u_n)-\frac{1}{s+1} f_\lambda^{-1}(\Delta u_n) \Delta u_n dx +\left(\frac{1}{s+1}-\frac{1}{q+1}\right) \displaystyle \int_{\overline{\Omega}} |u_n|^{q+1}dx\\
\geq \displaystyle\left( \frac{ps-1}{(p+1)(s+1)}+\frac{1}{s+1}-\frac{1}{q+1}\right) \nu_j= \frac{2}{N}\nu_j\geq \frac{2}{N} S^{\frac{pN}{2(p+1)}}
\end{multline}
for every $j\in J$, which is a contradiction. Hence, $J=\emptyset$.

Then, from Lemmas \ref{convseqlim} and \ref{elves1}, since $L^{q+1}(\Omega)$ is uniformly convex, $u_n \rightarrow u$ in $L^{q+1}(\Omega)$.

Let $v_n=u_n-u$, thus $v_n\rightharpoonup 0$ in $E_p$, $\Delta v_n \rightarrow 0\ a.e.$ in $\Omega$ and $v_n \rightarrow 0$ in $L^{q+1}(\Omega)$. Since
\begin{equation}
|(a+b)f_\lambda^{-1}(a+b)-af_\lambda^{-1}(a)|\leq |a+b|^{\frac{p+1}{p}}+|a|^{\frac{p+1}{p}} \leq 2^p( |b|^{\frac{p+1}{p}} + |a|^{\frac{p+1}{p}}), \ \ \forall\, a,b \in \mathbb{R},
\end{equation}
from \cite[Theorem 2]{brezislieb}, with $j(t)=tf_\lambda^{-1}(t)$, 
\begin{multline}
\int_\Omega f_\lambda^{-1}(\Delta u_n)\Delta u_n dx = \int_\Omega f_\lambda^{-1}(\Delta u+\Delta v_n) (\Delta u +\Delta v_n) dx \\= \int_\Omega f_\lambda^{-1}(\Delta u)\Delta u + f_\lambda^{-1}(\Delta v_n)\Delta v_n dx+o(1).
\end{multline}
Since $u$ is weak solution of \eqref{prob},
\begin{multline}
o(1) = \langle I_F'(u_n), u_n \rangle = \displaystyle \int_{\Omega} f_\lambda^{-1}(\Delta u_n)\Delta u_n  -  |u_n|^{q+1} - \mu  |u_n|^{s+1} dx\\ =\displaystyle \langle I_F'(u), u \rangle+ \int_\Omega f_\lambda^{-1}(\Delta v_n)\Delta v_n dx+o(1)=\int_\Omega f_\lambda^{-1}(\Delta v_n)\Delta v_n dx+o(1),
\end{multline}
that is, $f_\lambda^{-1}(\Delta v_n)\Delta v_n \rightarrow 0$ in $L^1(\Omega)$. Then, Lemma \ref{ineqftpr} and Jensen's inequality \eqref{jensen} lead to

\begin{multline}
0 \leftarrow \displaystyle \int_\Omega f_\lambda^{-1}(\Delta v_n)\Delta v_n dx \geq \frac{1}{2^{1/p}} \int_{|\Delta v_n| \geq 2\lambda^\frac{p}{p-r}} |\Delta v_n|^{\frac{p+1}{p}} dx + \frac{1}{(2\lambda)^{1/r}} \int_{|\Delta v_n|\leq 2\lambda^\frac{p}{p-r}} |\Delta v_n|^{\frac{r+1}{r}} dx \vspace{5pt} \\ 
\geq\frac{1}{2^{1/p}} \int_{|\Delta v_n| \geq 2\lambda^\frac{p}{p-r}} |\Delta v_n|^{\frac{p+1}{p}} dx + \frac{1}{(2\lambda)^{1/r}}|\Omega|^{1-\alpha} \left(\int_{|\Delta v_n|\leq 2\lambda^\frac{p}{p-r}} |\Delta v_n|^{\frac{p+1}{p}} dx\right)^\alpha,
\end{multline}
with $\alpha = \frac{p}{p+1} \frac{r+1}{r}$. Therefore $|\Delta v_n|^{\frac{p+1}{p}} \to 0$ in $L^1(\Omega)$, that is, $u_n \to u$ in $E_p$. 
\end{proof}

At this point we have all the tools at hand to prove our main result.

\begin{proof}[\textbf{Proof of Theorem \ref{theo1}}]
Suppose, without  loss of generality, that $p\leq q$. The case $q\leq p$ can be handled similarly, by using $I_G$ instead of $I_F$. By Propositions \ref{prop-mpgeometry}, \ref{prop-mplevel}, \ref{propcompacidade} and Lemma \ref{regularity Lemma}, the existence of a classical solution is a direct consequence of the Mountain Pass theorem. 

Next, we prove that any Mountain Pass solution is signed. Let $u$ be a Mountain Pass solution of \eqref{prob}. So, by Lemma \ref{regularity Lemma}, $u\in C^2(\overline{\Omega})$ and $u= 0$ on $\partial \Omega$. Then, by the classical strong maximum principle for second-order elliptic operators, it is enough to show that $\Delta u$ does not change sign in $\Omega$.
By contradiction, suppose that $\Delta u$ changes sign in $\Omega$, and let $\omega$ be the solution of
\begin{equation}\left\{
\begin{array}{crl}
-\Delta \omega = |\Delta u| &\text{in } \Omega,\\
\omega=0 &\text{on } \partial\Omega.
\end{array}\right.
\end{equation}
By the Strong Maximum Principle, $\omega>|u|$ in $\Omega$ and we infer that
\begin{multline} c_F\leq\max_{t\geq0} I_F (t\omega) =\max_{t\geq0} \left\{ \int_\Omega \overline{F}_\lambda(t\Delta \omega)dx - \frac{\mu}{s+1} t^{s+1}\int_\Omega |\omega|^{s+1}dx -\frac{t^{q+1}}{q+1} \int_\Omega |\omega|^{q+1}dx\right\}\\
<\max_{t\geq0} \left\{ \int_\Omega \overline{F}_\lambda(t\Delta u)dx - \frac{\mu}{s+1} t^{s+1}\int_\Omega |u|^{s+1}dx -\frac{t^{q+1}}{q+1} \int_\Omega |u|^{q+1}dx\right\}=\max_{t\geq0}I_F(tu)=c_F,
\end{multline}
which is a contradiction. Hence $\Delta u$ does not change sign in $\Omega$, and therefore, up to multiplication by $-1$, $u>0$ and $-\Delta u>0$ in $\Omega$. Finally, by Lemma \ref{regularity Lemma}, with $v = f^{-1}(-\Delta u)$, $(u, v)$ is a positive classical solution of \eqref{sist}.
\end{proof}

\section{Appendix}\label{app}
\subsection{Some technical properties of the auxiliary functions}\label{sectiontec}
Ahead in this section, where \eqref{rscondition} is assumed, some properties of the functions $f_\lambda^{-1},\ \overline{F}_\lambda,\ g_\mu^{-1}$ and $\overline{G}_\mu$, as defined in \eqref{fglambda}, are given. Indeed, we can consider $f_\lambda^{-1}$ and $\overline{F}_\lambda$ and infer the respective properties for the others. We start by showing some useful inequalities. Observe that
\begin{equation}
t^p<f_\lambda(t) \text{ and } \lambda t^r< f_\lambda(t) \ \ \text{for} \ \ t>0
\end{equation}
and writing  $\tau=f_\lambda(t)$ we get
\begin{equation}\label{ineqinv}
f_\lambda^{-1}(\tau)<\tau^{1/p} \text{ and } f_\lambda^{-1}(\tau)<\frac{\tau^{1/r}}{\lambda^{1/r}} \ \ \text{for} \ \ t>0.
\end{equation}
So,
\begin{equation}\label{ineq1'}
 \overline{F}_\lambda(\tau)\leq \frac{p}{p+1} |\tau|^{\frac{p+1}{p}} \text{ and } \overline{F}_\lambda(\tau)\leq \frac{r}{r+1} \frac{1}{\lambda^{1/r}}|\tau|^{\frac{r+1}{r}}\ \ \forall\, \tau\in \R.
\end{equation}

The next lemmas are used to obtain the geometric condition and upper bounds for the critical level of the  Mountain Pass Theorem for the functionals $I_F$ and $I_G$.

\begin{lem}\label{Ffeq}
$\overline{F}_\lambda(t)= \frac{p}{p+1} f^{-1}_\lambda(t)t-\frac{p-r}{p+1}\frac{\lambda}{r+1}|f^{-1}_\lambda(t)|^{r+1}$ for all $t\in\R$
\end{lem}
\begin{proof}
Set $M(t):=\frac{p}{p+1}f_\lambda^{-1}(t)t-\frac{p-r}{p+1}\frac{\lambda}{r+1}|f^{-1}_\lambda(t)|^{r+1}-\overline{F}_\lambda(t)$. Then $M(0)=0$, $M$ is even, and for $t>0$
\begin{multline}
M'(t)  =\displaystyle\frac{1}{p+1}\left( -f_\lambda^{-1}(t)+\frac{pt}{\lambda r[f_\lambda^{-1}(t)]^{r-1}+p[f_\lambda^{-1}(t)]^{p-1}}-\frac{\lambda(p-r)\left[f^{-1}_\lambda(t)\right]^{r}}{\lambda r[f_\lambda^{-1}(t)]^{r-1}+p[f_\lambda^{-1}(t)]^{p-1}}\right)\\
  \displaystyle=\frac{1}{p+1}\left(\frac{\lambda(p-r)\left[f^{-1}_\lambda(t)\right]^{r}}{\lambda r[f_\lambda^{-1}(t)]^{r-1}+p[f_\lambda^{-1}(t)]^{p-1}}-\frac{\lambda(p-r)\left[f^{-1}_\lambda(t)\right]^{r}}{\lambda r[f_\lambda^{-1}(t)]^{r-1}+p[f_\lambda^{-1}(t)]^{p-1}}\right) = 0,
\end{multline}
which implies the desired identity.
\end{proof}

\begin{lem}\label{Ffeq2}
$\overline{F}_\lambda(t)= \lambda\frac{r}{r+1}|f^{-1}_\lambda(t)|^{r+1}+ \frac{p}{p+1} |f^{-1}_\lambda(t)|^{p+1}$ for all $t\in\R$.
\end{lem}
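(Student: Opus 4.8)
\textbf{Proof plan for Lemma \ref{Ffeq2}.}

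The plan is to exploit the defining relation between $f_\lambda$ and its inverse together with the integral definition of $\overline{F}_\lambda$. First I would recall that $\overline{F}_\lambda(t)=\int_0^t f_\lambda^{-1}(\tau)\,d\tau$, so that the identity to be proved is equivalent, upon differentiating, to a pointwise relation that can be verified directly. Concretely, set $\Phi(t):=\lambda\frac{r}{r+1}|f_\lambda^{-1}(t)|^{r+1}+\frac{p}{p+1}|f_\lambda^{-1}(t)|^{p+1}-\overline{F}_\lambda(t)$. Then $\Phi(0)=0$ and $\Phi$ is even (since $f_\lambda^{-1}$ is odd and all exponents $r+1,p+1$ pair with absolute values), so it suffices to show $\Phi'(t)=0$ for $t>0$.

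For $t>0$ write $w:=f_\lambda^{-1}(t)>0$, so that $t=f_\lambda(w)=\lambda w^r+w^p$ and, by the inverse function theorem, $(f_\lambda^{-1})'(t)=1/f_\lambda'(w)=1/(\lambda r w^{r-1}+p\,w^{p-1})$. Differentiating $\Phi$ and using $\overline{F}_\lambda'(t)=f_\lambda^{-1}(t)=w$ gives
\begin{equation}
\Phi'(t)=\bigl(\lambda r\,w^{r}+p\,w^{p}\bigr)\cdot\frac{1}{\lambda r w^{r-1}+p\,w^{p-1}}-w
=\frac{w\bigl(\lambda r\,w^{r-1}+p\,w^{p-1}\bigr)}{\lambda r w^{r-1}+p\,w^{p-1}}-w=0.
\end{equation}
Hence $\Phi\equiv 0$ on $(0,\infty)$, and by evenness on all of $\R$, which is the claimed identity.

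There is essentially no obstacle here: the computation is a one-line differentiation once one writes $f_\lambda(w)=\lambda w^r+w^p$ and reads off $f_\lambda'(w)$. The only minor point to keep track of is the sign/parity bookkeeping for $t<0$ (handled by the evenness of $\Phi$) and the fact that $f_\lambda^{-1}$ is $C^1$ away from the origin because $f_\lambda'(w)=\lambda r w^{r-1}+p w^{p-1}>0$ for $w>0$; continuity at $t=0$ then gives the identity there by taking limits. Alternatively, one could bypass differentiation entirely by combining Lemma \ref{Ffeq} with the algebraic identity $f_\lambda^{-1}(t)\,t = \lambda|f_\lambda^{-1}(t)|^{r+1}+|f_\lambda^{-1}(t)|^{p+1}$ (which is just $t=f_\lambda(w)$ multiplied by $w$), substituting into the expression from Lemma \ref{Ffeq} and simplifying; I would present whichever is shorter, but the two approaches are equivalent.
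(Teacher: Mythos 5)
Your proposal is correct and follows essentially the same route as the paper: differentiate the right-hand side for $t>0$, observe via the inverse function theorem that the derivative collapses to $f_\lambda^{-1}(t)=\overline{F}_\lambda'(t)$, and conclude using the value at $0$ and evenness, exactly as in the proof of Lemma \ref{Ffeq}. Your alternative remark (combining Lemma \ref{Ffeq} with $f_\lambda^{-1}(t)\,t=\lambda|f_\lambda^{-1}(t)|^{r+1}+|f_\lambda^{-1}(t)|^{p+1}$) also checks out, but it is not needed.
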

\begin{proof}
The argument follows as in the proof of the last  Lemma, observing that for all $t>0$
\begin{multline}\frac{d}{dt}\left( \lambda\frac{r}{r+1}|f^{-1}_\lambda(t)|^{r+1}+ \frac{p}{p+1} |f^{-1}_\lambda(t)|^{p+1} \right)= \frac{\lambda r\left[f^{-1}_\lambda(t)\right]^{r}+ p \left[f^{-1}_\lambda(t)\right]^{p}}{\lambda r\left[f^{-1}_\lambda(t)\right]^{r-1}+ p \left[f^{-1}_\lambda(t)\right]^{p-1}}=f_\lambda^{-1}(t),\end{multline}
which coincides with $\frac{d}{dt} \overline{F}_{\lambda}(t)$.
\end{proof}

\begin{cor}\label{corFs}
$\displaystyle\overline{F}_\lambda(t)\geq\frac{f_\lambda^{-1}(t)t}{s+1}$, for all $t\in \mathbb{R}.$
\end{cor}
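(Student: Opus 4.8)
The plan is to read off $\overline{F}_\lambda$ from the closed form already established in Lemma \ref{Ffeq2} and compare it termwise with $f_\lambda^{-1}(t)t$. Both sides of the claimed inequality are even functions of $t$ (since $\overline{F}_\lambda$ is even and $f_\lambda^{-1}$ is odd, so $f_\lambda^{-1}(t)t$ is even), and both vanish at $t=0$, so it suffices to treat $t>0$.

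For $t>0$ write $w:=f_\lambda^{-1}(t)>0$, so that $t=f_\lambda(w)=\lambda w^r+w^p$ and hence
\[
f_\lambda^{-1}(t)\,t = w\left(\lambda w^r+w^p\right)=\lambda w^{r+1}+w^{p+1}.
\]
By Lemma \ref{Ffeq2}, on the other hand, $\overline{F}_\lambda(t)=\frac{\lambda r}{r+1}w^{r+1}+\frac{p}{p+1}w^{p+1}$. So the inequality to be proved reduces to
\[
\frac{\lambda r}{r+1}\,w^{r+1}+\frac{p}{p+1}\,w^{p+1}\;\geq\;\frac{1}{s+1}\left(\lambda w^{r+1}+w^{p+1}\right),
\]
which follows termwise once we know $\frac{1}{s+1}\leq \frac{r}{r+1}$ and $\frac{1}{s+1}\leq \frac{p}{p+1}$.

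The first of these is exactly the condition $rs\geq 1$ from \eqref{rscondition}: indeed $rs\geq 1$ gives $s+1\geq \frac{1}{r}+1=\frac{r+1}{r}$, i.e. $\frac{1}{s+1}\leq \frac{r}{r+1}$. The second then comes for free, since $r<p$ (again \eqref{rscondition}) yields $\frac{r}{r+1}<\frac{p}{p+1}$. Thus $\frac{1}{s+1}\leq \frac{r}{r+1}<\frac{p}{p+1}$, both termwise bounds hold, and adding them gives $\overline{F}_\lambda(t)\geq \frac{1}{s+1}f_\lambda^{-1}(t)t$ for $t>0$; the case $t\leq 0$ follows by parity, completing the proof. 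There is no real obstacle here — the only point to be careful about is recording that $rs\geq 1$ is precisely what makes the $w^{r+1}$-coefficient comparison work, and that $r<p$ disposes of the $w^{p+1}$-coefficient comparison automatically.
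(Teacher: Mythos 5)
Your argument is correct and follows essentially the same route as the paper: both reduce the claim to Lemma \ref{Ffeq2}, rewrite $f_\lambda^{-1}(t)t=\lambda|f_\lambda^{-1}(t)|^{r+1}+|f_\lambda^{-1}(t)|^{p+1}$, and then compare coefficients termwise using $rs\geq 1$ (for the $r$-term) and $r<p$ (for the $p$-term). The substitution $w=f_\lambda^{-1}(t)$ is only a cosmetic difference from the paper's phrasing.
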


\begin{proof}
Since $f^{-1}_{\lambda}(t)t=f^{-1}_{\lambda}(t)f_\lambda(f_\lambda^{-1}(t)) = \lambda |f_{\lambda}^{-1}(t)|^{r+1} + |f_{\lambda}^{-1}(t)|^{p+1}$, from Lemma \ref{Ffeq2} and \eqref{rscondition}, for all $t \in \mathbb{R}$,
\[\overline{F}_\lambda (t) -\frac{f_\lambda^{-1}(t)t}{s+1}= \lambda\left(\frac{r}{r+1}-\frac{1}{s+1}\right)|f^{-1}_\lambda(t)|^{r+1}  + \left(\frac{p}{p+1}-\frac{1}{s+1}\right) |f^{-1}_\lambda(t)|^{p+1}\geq 0. \qedhere
\]

\end{proof}

\begin{lem}\label{ineqftpr}
\[f_\lambda^{-1}(t)\geq \left\{ \begin{array}{cl}
\displaystyle\left(\frac{t}{2\lambda}\right)^{1/r},& \forall\ 0<t\leq 2\lambda^\frac{p}
{p-r},\\
\displaystyle\left(\frac{t}{2}\right)^{1/p},& \forall\ t\geq 2\lambda^\frac{p}
{p-r}.
\end{array}\right.
\]
\end{lem}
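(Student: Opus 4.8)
The plan is to prove the two-sided pointwise bound on $f_\lambda^{-1}$ by inverting the elementary inequalities satisfied by $f_\lambda$ itself. Recall that $f_\lambda(t) = \lambda t^r + t^p$ for $t>0$, and we want to show that $f_\lambda^{-1}(t)$ is bounded below by $(t/2\lambda)^{1/r}$ when $t$ is small and by $(t/2)^{1/p}$ when $t$ is large, with the crossover happening at $t = 2\lambda^{p/(p-r)}$. Since $f_\lambda$ is strictly increasing on $(0,\infty)$, it suffices to show the reverse inequalities for $f_\lambda$ evaluated at the proposed lower bounds: concretely, to get $f_\lambda^{-1}(t) \geq (t/2\lambda)^{1/r}$ it is enough to check that $f_\lambda\big((t/2\lambda)^{1/r}\big) \leq t$, and similarly $f_\lambda\big((t/2)^{1/p}\big)\le t$ gives the second bound.

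First I would treat the small range $0 < t \leq 2\lambda^{p/(p-r)}$. Set $a = (t/2\lambda)^{1/r}$, so that $\lambda a^r = t/2$. Then $f_\lambda(a) = \lambda a^r + a^p = t/2 + a^p$, and it remains to verify $a^p \leq t/2$, i.e. $(t/2\lambda)^{p/r} \leq t/2$. Writing $\rho := t/2$, this is $\rho^{p/r}\lambda^{-p/r} \leq \rho$, equivalently $\rho^{(p-r)/r} \leq \lambda^{p/r}$, i.e. $\rho \leq \lambda^{p/(p-r)}$, which is exactly $t \leq 2\lambda^{p/(p-r)}$. Hence $f_\lambda(a)\le t$, and monotonicity of $f_\lambda^{-1}$ yields $f_\lambda^{-1}(t)\ge a$, as claimed.

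Next I would treat the large range $t \geq 2\lambda^{p/(p-r)}$. Set $b = (t/2)^{1/p}$, so $b^p = t/2$ and $f_\lambda(b) = \lambda b^r + t/2$. Now it suffices to check $\lambda b^r \leq t/2$, i.e. $\lambda (t/2)^{r/p} \leq t/2$. With $\rho := t/2$ again, this reads $\lambda \rho^{r/p} \leq \rho$, i.e. $\lambda \leq \rho^{(p-r)/p}$, i.e. $\rho \geq \lambda^{p/(p-r)}$, which is precisely $t \geq 2\lambda^{p/(p-r)}$. Thus $f_\lambda(b)\le t$ and again $f_\lambda^{-1}(t)\ge b$. (Note the two cases agree at the crossover point $t = 2\lambda^{p/(p-r)}$, where $(t/2\lambda)^{1/r} = (t/2)^{1/p} = \lambda^{1/(p-r)}$.) The case $t < 0$ follows since $f_\lambda$ is odd, and $t=0$ is trivial.

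The argument is essentially a direct verification and I do not anticipate a serious obstacle; the only point requiring a little care is bookkeeping the exponents so that the threshold $2\lambda^{p/(p-r)}$ comes out matching exactly in both cases, and making sure the inequality is stated for the correct branch (lower bound, using that dropping a positive term $a^p$ or $\lambda b^r$ only decreases $f_\lambda$, hence the comparison goes the right way after inverting). One should also record that $p > r$ by \eqref{rscondition}, so that the exponent $p/(p-r)$ is positive and all the manipulations with powers of $\lambda$ are legitimate.
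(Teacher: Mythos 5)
Your proof is correct and follows essentially the same route as the paper: the paper sets $z=f_\lambda^{-1}(t)$, notes $2\lambda^{p/(p-r)}=f_\lambda(\lambda^{1/(p-r)})$, and uses $z\lessgtr\lambda^{1/(p-r)}$ to bound $t=z^p+\lambda z^r$ by $2\lambda z^r$ or $2z^p$, while you equivalently evaluate $f_\lambda$ at the candidate lower bounds and invoke monotonicity — the same elementary verification organized in the dual direction. No gaps; the exponent bookkeeping and the use of $p>r$ are exactly as needed.
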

\begin{proof}
Observe that $2\lambda^\frac{p}{p-r}=\lambda (\lambda^\frac{1}{p-r})^r+(\lambda^\frac{1}{p-r})^p=f_\lambda(\lambda^\frac{1}{p-r})$ and write $z=f_\lambda^{-1}(t)$.

If $t\leq 2\lambda^\frac{p}{p-r}$, applying $f_\lambda^{-1}$ to this inequality, one gets $z\leq f^{-1}_{\lambda}(2\lambda^\frac{p}{p-r}) =\lambda^\frac{1}{p-r}$, that is, $ z^{p}\leq\lambda z^r$, and so $t =z^{p}+\lambda z^r\leq2\lambda z^r$, which implies $\left(\frac{t}{2\lambda}\right)^{1/r}\leq f_\lambda^{-1}(t).$

If $t\geq 2\lambda^\frac{p}{p-r}$, then $z\geq \lambda^\frac{1}{p-r}$ and  $2z^{p}\geq\lambda z^r+z^{p}= t$, which implies $f_\lambda^{-1}(t)\geq \left(t/2\right)^{1/p}$, as desired.
\end{proof}

\begin{lem}\label{ineqFtrp}
\[\overline{F}_\lambda(t)\geq \left\{ \begin{array}{ll}
\displaystyle \frac{r}{r+1}\lambda^{-1/r}\left(\frac{t}{2}\right)^\frac{r+1}{r}+\frac{p}{p+1}\left(\frac{t}{2\lambda}\right)^\frac{p+1}{r},& \forall\ |t|\leq 2\lambda^\frac{p}{p-r},\vspace{5pt}\\
\displaystyle  \frac{r}{r+1}\lambda\left(\frac{t}{2}\right)^\frac{r+1}{p}+\frac{p}{p+1}\left(\frac{t}{2}\right)^\frac{p+1}{p},& \forall\ |t|\geq 2\lambda^\frac{p}{p-r}.
\end{array}\right.
\]

\end{lem}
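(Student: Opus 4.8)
The plan is to obtain the estimate directly from the closed-form expression for $\overline{F}_\lambda$ in Lemma \ref{Ffeq2} together with the pointwise lower bounds for $f_\lambda^{-1}$ in Lemma \ref{ineqftpr}, so that no new analysis is required. First I would note that $f_\lambda$ is odd, hence $f_\lambda^{-1}$ is odd and $|f_\lambda^{-1}|$ is even; by Lemma \ref{Ffeq2} this makes $\overline{F}_\lambda$ an even function of $t$, so it suffices to establish the two inequalities for $t>0$, which is exactly the range in which Lemma \ref{ineqftpr} is stated.

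For $t>0$, Lemma \ref{Ffeq2} gives $\overline{F}_\lambda(t)=\lambda\frac{r}{r+1}\big(f_\lambda^{-1}(t)\big)^{r+1}+\frac{p}{p+1}\big(f_\lambda^{-1}(t)\big)^{p+1}$. Since $\sigma\mapsto\sigma^{r+1}$ and $\sigma\mapsto\sigma^{p+1}$ are increasing on $[0,\infty)$ and $f_\lambda^{-1}(t)>0$, I would bound each of the two summands from below by inserting the relevant estimate of Lemma \ref{ineqftpr}. In the range $0<t\le 2\lambda^{p/(p-r)}$ one uses $f_\lambda^{-1}(t)\ge (t/2\lambda)^{1/r}$, which yields $\big(f_\lambda^{-1}(t)\big)^{r+1}\ge (t/2\lambda)^{(r+1)/r}$ and $\big(f_\lambda^{-1}(t)\big)^{p+1}\ge (t/2\lambda)^{(p+1)/r}$; rewriting the first of the resulting terms via the elementary identity $\lambda\,(t/2\lambda)^{(r+1)/r}=\lambda^{-1/r}(t/2)^{(r+1)/r}$ produces precisely the first line of the claim. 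In the range $t\ge 2\lambda^{p/(p-r)}$ one instead uses $f_\lambda^{-1}(t)\ge (t/2)^{1/p}$, so $\big(f_\lambda^{-1}(t)\big)^{r+1}\ge (t/2)^{(r+1)/p}$ and $\big(f_\lambda^{-1}(t)\big)^{p+1}\ge (t/2)^{(p+1)/p}$, and multiplying by $\lambda\frac{r}{r+1}$ and $\frac{p}{p+1}$ respectively gives the second line.

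I do not anticipate any genuine obstacle: the proof is a termwise substitution of known bounds into a known identity. The only steps deserving a moment of care are the bookkeeping of the powers of $\lambda$ when rewriting $\lambda\,(t/2\lambda)^{(r+1)/r}$ as $\lambda^{-1/r}(t/2)^{(r+1)/r}$ in the low-$t$ regime, and checking that the endpoint $t=2\lambda^{p/(p-r)}$ is consistently covered by both cases (it is, since at that value $f_\lambda^{-1}(t)=\lambda^{1/(p-r)}$ satisfies both lower bounds with equality).
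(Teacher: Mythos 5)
Your argument is exactly the paper's proof, which states the lemma follows directly from Lemmas \ref{Ffeq2} and \ref{ineqftpr}; you have simply written out the termwise substitution and the algebra of the $\lambda$-exponent that the paper leaves implicit. The bookkeeping checks out (in particular $\lambda\,(t/2\lambda)^{(r+1)/r}=\lambda^{-1/r}(t/2)^{(r+1)/r}$ since $(r+1)/r-1=1/r$), and the reduction to $t>0$ via evenness is the right way to reconcile the $|t|$ in the statement with the $t>0$ hypothesis of Lemma \ref{ineqftpr}.
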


\begin{proof}
It is a straightforward consequence of Lemmas \ref{Ffeq2} and \ref{ineqftpr}.
\end{proof}

\begin{lem}\label{lema2notas}
For $\tau= \frac{ps-1}{2^{\frac{p+1}{p}}(p+1)(s+1)}$, 
\begin{equation}
\overline{F}_\lambda (t) -\frac{f_\lambda^{-1}(t)t}{s+1} \geq \tau |t|^{\frac{p+1}{p}} \ \ \forall \, |t|\geq 2\lambda^\frac{p}{p-r}.\label{lemanovo2}
\end{equation}
\end{lem}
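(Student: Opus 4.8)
The plan is to reduce \eqref{lemanovo2} to the two exact identities already available, namely Lemma \ref{Ffeq2} and Corollary \ref{corFs}, together with the lower bound for $f_\lambda^{-1}$ on the range $|t|\geq 2\lambda^{p/(p-r)}$ from Lemma \ref{ineqftpr}. Since the left-hand side is even in $t$ (both $\overline{F}_\lambda$ and $t\mapsto f_\lambda^{-1}(t)t$ are even), it suffices to treat $t\geq 2\lambda^{p/(p-r)}$. First I would use the computation already carried out in the proof of Corollary \ref{corFs}: combining $f_\lambda^{-1}(t)t=\lambda|f_\lambda^{-1}(t)|^{r+1}+|f_\lambda^{-1}(t)|^{p+1}$ with Lemma \ref{Ffeq2} gives
\[
\overline{F}_\lambda(t)-\frac{f_\lambda^{-1}(t)t}{s+1}=\lambda\Big(\frac{r}{r+1}-\frac{1}{s+1}\Big)|f_\lambda^{-1}(t)|^{r+1}+\Big(\frac{p}{p+1}-\frac{1}{s+1}\Big)|f_\lambda^{-1}(t)|^{p+1}.
\]
Under \eqref{rscondition} we have $rs\geq 1$, so $\frac{r}{r+1}-\frac{1}{s+1}=\frac{rs-1}{(r+1)(s+1)}\geq 0$, hence the first term is nonnegative and can be discarded. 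Also $\frac{p}{p+1}-\frac{1}{s+1}=\frac{ps-1}{(p+1)(s+1)}>0$ because $ps>rs\geq 1$. This leaves the lower bound
\[
\overline{F}_\lambda(t)-\frac{f_\lambda^{-1}(t)t}{s+1}\geq \frac{ps-1}{(p+1)(s+1)}\,|f_\lambda^{-1}(t)|^{p+1}.
\]

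Next I would invoke Lemma \ref{ineqftpr} in the regime $t\geq 2\lambda^{p/(p-r)}$, which yields $f_\lambda^{-1}(t)\geq (t/2)^{1/p}$, hence $|f_\lambda^{-1}(t)|^{p+1}\geq (t/2)^{(p+1)/p}=2^{-(p+1)/p}t^{(p+1)/p}$ for $t\geq 2\lambda^{p/(p-r)}$; the even symmetry then gives $|f_\lambda^{-1}(t)|^{p+1}\geq 2^{-(p+1)/p}|t|^{(p+1)/p}$ for all $|t|\geq 2\lambda^{p/(p-r)}$. Substituting this into the previous inequality produces exactly
\[
\overline{F}_\lambda(t)-\frac{f_\lambda^{-1}(t)t}{s+1}\geq \frac{ps-1}{2^{(p+1)/p}(p+1)(s+1)}\,|t|^{(p+1)/p}=\tau|t|^{(p+1)/p},
\]
which is the claim.

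There is essentially no serious obstacle here: the result is a bookkeeping consequence of Lemma \ref{Ffeq2}, Lemma \ref{ineqftpr}, and the sign conditions in \eqref{rscondition}. The only point that deserves a line of care is confirming that the coefficient of $|f_\lambda^{-1}(t)|^{r+1}$ is nonnegative, i.e. that $rs\geq 1$ is precisely what makes $\frac{r}{r+1}\geq\frac{1}{s+1}$, so that this term may be dropped rather than estimated; and that $ps-1>0$ (which follows since $p>r$ and $rs\geq 1$) so that $\tau>0$ and the inequality is nontrivial. One could alternatively avoid discarding the $|f_\lambda^{-1}(t)|^{r+1}$ term and instead also bound it below using the first branch relation, but that is unnecessary for the stated estimate and would only complicate the constant.
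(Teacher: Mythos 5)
Your argument is correct and follows exactly the paper's own proof: both use the identity established in the proof of Corollary \ref{corFs} to express $\overline{F}_\lambda(t)-\frac{f_\lambda^{-1}(t)t}{s+1}$ as a sum of two nonnegative terms, drop the $|f_\lambda^{-1}(t)|^{r+1}$ term using $rs\geq 1$, and then invoke Lemma \ref{ineqftpr} on the range $|t|\geq 2\lambda^{p/(p-r)}$ to lower bound $|f_\lambda^{-1}(t)|^{p+1}$ by $(|t|/2)^{(p+1)/p}$. You merely spell out the sign checks that the paper leaves implicit.
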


\begin{proof}
By Lemma \ref{ineqftpr} and the proof of Corollary \ref{corFs}, for all $|t|\geq 2\lambda^\frac{p}{p-r},$
\begin{multline}
\overline{F}_\lambda (t) -\frac{f_\lambda^{-1}(t)t}{s+1}=\lambda\left(\frac{r}{r+1}-\frac{1}{s+1}\right)|f^{-1}_\lambda(t)|^{r+1}  + \left(\frac{p}{p+1}-\frac{1}{s+1}\right) |f^{-1}_\lambda(t)|^{p+1}\\
\geq \frac{ps-1}{(p+1)(s+1)} |f^{-1}_\lambda(t)|^{p+1}\geq\frac{ps-1}{(p+1)(s+1)}\left(\frac{|t|}{2}\right)^\frac{p+1}{p}.\qedhere
\end{multline}
\end{proof}

\begin{lem}\label{fineq}
For all $\alpha,\ \beta \in \mathbb{R}$, there exists $\theta\in(0,\ 1)$ such that
\begin{equation} \displaystyle0 \leq f^{-1}_\lambda(\alpha+\beta)(\alpha+\beta) \leq  f^{-1}_\lambda(\alpha)\alpha+ \frac{r+1}{r} |\beta| \, | f^{-1}_{\lambda}(\alpha +\theta \beta)|.
\end{equation}

\end{lem}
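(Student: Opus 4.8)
The plan is to prove the three assertions in turn, with all the work concentrated in verifying the differential inequality pointwise.

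First I would dispose of the lower bound $0 \le f_\lambda^{-1}(\alpha+\beta)(\alpha+\beta)$: this is immediate from \eqref{fglambda}, since $f_\lambda(t)$ has the same sign as $t$, hence so does $f_\lambda^{-1}$, and therefore $f_\lambda^{-1}(\tau)\tau \ge 0$ for every $\tau \in \R$. In fact, as noted in the proof of Corollary \ref{corFs}, $f_\lambda^{-1}(\tau)\tau = \lambda|f_\lambda^{-1}(\tau)|^{r+1} + |f_\lambda^{-1}(\tau)|^{p+1} \ge 0$.

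For the upper bound, the natural approach is to apply the Mean Value Theorem to the $C^1$ function $\varphi(t) := f_\lambda^{-1}(t)\,t$. The point is that one can differentiate $\varphi$ cleanly: writing $z = f_\lambda^{-1}(t)$, one has $t = \lambda z^r + z^p$ (for $t>0$), so by the inverse function theorem $(f_\lambda^{-1})'(t) = \big(\lambda r z^{r-1} + p z^{p-1}\big)^{-1}$, and therefore
\begin{equation}
\varphi'(t) = f_\lambda^{-1}(t) + t\,(f_\lambda^{-1})'(t) = z + \frac{\lambda z^r + z^p}{\lambda r z^{r-1} + p z^{p-1}} = z + \frac{z(\lambda z^{r-1} + z^{p-1})}{\lambda r z^{r-1} + p z^{p-1}}.
\end{equation}
Since $r \ge 1$ (indeed $rs\ge 1$ with $s\le q$, but more to the point $r\ge 1$ is what is needed here, which follows because $rs\ge1$ and $s\le q$ — if $r<1$ one still has $\lambda z^{r-1}+z^{p-1} \le \frac1r(\lambda r z^{r-1} + p z^{p-1})$ because $\frac1r \ge 1 \ge \frac{p}{\ }$... let me instead argue directly), one estimates the fraction: the bound $\lambda z^{r-1} + z^{p-1} \le \tfrac1r\big(\lambda r z^{r-1} + p z^{p-1}\big)$ holds whenever $1 \le \tfrac{p}{r}$, i.e. $r \le p$, which is exactly \eqref{rscondition}; hence $\varphi'(t) \le z + \tfrac1r z = \tfrac{r+1}{r} z = \tfrac{r+1}{r} f_\lambda^{-1}(t)$ for $t>0$, and by the oddness of $f_\lambda^{-1}$ and evenness of $\varphi'$ one gets $|\varphi'(t)| \le \tfrac{r+1}{r}|f_\lambda^{-1}(t)|$ for all $t\ne0$ (the case $t=0$ being trivial). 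The Mean Value Theorem then gives, for some $\theta\in(0,1)$,
\begin{equation}
\varphi(\alpha+\beta) - \varphi(\alpha) = \varphi'(\alpha+\theta\beta)\,\beta, \qquad \text{so} \qquad \varphi(\alpha+\beta) \le \varphi(\alpha) + |\beta|\,|\varphi'(\alpha+\theta\beta)| \le \varphi(\alpha) + \tfrac{r+1}{r}|\beta|\,|f_\lambda^{-1}(\alpha+\theta\beta)|,
\end{equation}
which is the claim.

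The main obstacle is the estimate $\lambda z^{r-1} + z^{p-1} \le \tfrac1r(\lambda r z^{r-1} + p z^{p-1})$, i.e. isolating exactly where the hypothesis $r\le p$ enters and making sure the constant $\tfrac{r+1}{r}$ is sharp; one must be slightly careful about the regularity of $f_\lambda^{-1}$ at $0$ when $r<1$ (then $(f_\lambda^{-1})'$ blows up there, so $\varphi$ may only be $C^1$ with $\varphi'(0)=0$ — which is still enough for MVT on any interval, handling $\alpha=\alpha+\theta\beta=0$ separately). A cleaner alternative avoiding all differentiability worries is to bypass $\varphi$ entirely: use the substitution $\tau = f_\lambda(z)$ to write $\varphi(t) = \lambda z^{r+1}+z^{p+1}$ as in Lemma \ref{Ffeq2}, and compare $\varphi$ at the two points via convexity/monotonicity of $z\mapsto \lambda z^{r+1}+z^{p+1}$ together with the explicit formula $\varphi'(t) = f_\lambda^{-1}(t)\cdot\frac{(r+1)\lambda z^{r}+(p+1)z^{p}}{\lambda r z^{r-1}+pz^{p-1}}\cdot\frac{1}{?}$... but this reintroduces the same computation, so I would simply present the MVT argument above, remarking that $\varphi \in C^1(\R)$ with the stated derivative bound.
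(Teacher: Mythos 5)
Your argument is correct and is essentially the paper's own proof: both consider $m(t)=f_\lambda^{-1}(t)t$, bound $|m'(t)|\leq \frac{r+1}{r}|f_\lambda^{-1}(t)|$ using only $r<p$ from \eqref{rscondition} (your initial digression about $r\geq 1$ is unnecessary, as you note), and conclude by the mean value theorem, with the sign of $f_\lambda^{-1}$ giving the lower bound.
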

\begin{proof} Consider the function $m(t) = f_{\lambda}^{-1}(t)t$. Then, $m$ is even, $m'(0) = 0$ and 
\[
m'(t)    =   \displaystyle f_{\lambda}^{-1}(t)+\frac{t f_{\lambda}^{-1}(t)}{\lambda r \left[f_{\lambda}^{-1}(t)\right]^{r}+p\left[f_{\lambda}^{-1}(t)\right]^{p}}, \ \ \text{for $t>0$}.
\]
Hence,
\[
0 < m'(t) < f_{\lambda}^{-1}(t)+\frac{t f_{\lambda}^{-1}(t)}{\lambda r \left[f_{\lambda}^{-1}(t)\right]^{r}+r\left[f_{\lambda}^{-1}(t)\right]^{p}} = \frac{r+1}{r} f_{\lambda}^{-1}(t) \ \ \forall \ t>0,
\]
which implies that $
|m'(t)| \leq \frac{r+1}{r} |f_{\lambda}^{-1}(t)|$ for all $t \in \R.$ By the mean value theorem, there exists $\theta \in (0,1)$ such that
\[
0 \leq f^{-1}_\lambda(\alpha+\beta)(\alpha+\beta)= f^{-1}_\lambda(\alpha)\alpha+m'(\alpha + \theta \beta) \beta \leq  f^{-1}_\lambda(\alpha)\alpha+ \frac{r+1}{r} |\beta| \, | f^{-1}_{\lambda}(\alpha +\theta \beta)|.\qedhere\]

\end{proof}
\begin{lem}\label{lemac.a.}
\[
\displaystyle\lim_{t\to \infty} \frac{t^{\frac{p+1}{p}}-f_\lambda^{-1}(t)t}{t^\frac{r+1}{p}}=\frac{\lambda}{p}.
\]
In particular, given $0 < c < \frac{\lambda}{p}$, there exists $t_0>0$ such that
\begin{equation}\label{comportamentoinfinito}
0 \leq f_\lambda^{-1}(t)t\leq |t|^\frac{p+1}{p} -c\, |t|^{\frac{r+1}{p}}, \quad \forall \, |t|\geq t_0.
\end{equation}
\end{lem}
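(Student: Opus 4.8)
The plan is to establish the asymptotic limit by working directly with the substitution $z=f_\lambda^{-1}(t)$ and then deduce the pointwise inequality \eqref{comportamentoinfinito} from it.

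First I would set $t>0$ and write $z=f_\lambda^{-1}(t)$, so that $t=f_\lambda(z)=\lambda z^r+z^p$ and $z\to\infty$ as $t\to\infty$. Then $f_\lambda^{-1}(t)\,t = z(\lambda z^r + z^p) = \lambda z^{r+1}+z^{p+1}$, while $t^{\frac{p+1}{p}}=(\lambda z^r+z^p)^{\frac{p+1}{p}} = z^{p+1}(1+\lambda z^{r-p})^{\frac{p+1}{p}}$. Using the expansion $(1+\epsilon)^{\frac{p+1}{p}} = 1 + \frac{p+1}{p}\epsilon + O(\epsilon^2)$ with $\epsilon=\lambda z^{r-p}\to 0$ (since $r<p$), I get
\[
t^{\frac{p+1}{p}} = z^{p+1} + \frac{p+1}{p}\lambda z^{r+1} + O(z^{2r-p+1}).
\]
Hence $t^{\frac{p+1}{p}}-f_\lambda^{-1}(t)t = \frac{p+1}{p}\lambda z^{r+1} - \lambda z^{r+1} + O(z^{2r-p+1}) = \frac{\lambda}{p}z^{r+1} + O(z^{2r-p+1})$. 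On the other hand $t^{\frac{r+1}{p}} = (\lambda z^r+z^p)^{\frac{r+1}{p}} = z^{r+1}(1+\lambda z^{r-p})^{\frac{r+1}{p}} = z^{r+1}(1+o(1))$. Dividing, the ratio tends to $\lambda/p$, which is the claimed limit. (One should note $2r-p+1 < r+1$ is equivalent to $r<p$, so the error term is genuinely lower order; in the case $rs=1$ one still has $r<p$ by \eqref{rscondition}, so this is fine.)

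For the second assertion, fix $0<c<\lambda/p$. By the limit just proved, there is $t_0>0$ such that for $t\ge t_0$ one has $\dfrac{t^{\frac{p+1}{p}}-f_\lambda^{-1}(t)t}{t^{\frac{r+1}{p}}} \ge c$, i.e. $f_\lambda^{-1}(t)t \le t^{\frac{p+1}{p}} - c\,t^{\frac{r+1}{p}}$; since all quantities here are even in $t$ (because $f_\lambda$ is odd, $f_\lambda^{-1}(t)t$ is even, and $|t|^{\frac{p+1}{p}}$, $|t|^{\frac{r+1}{p}}$ are even), the same inequality holds for $|t|\ge t_0$ after replacing $t$ by $|t|$, giving \eqref{comportamentoinfinito}. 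The nonnegativity $f_\lambda^{-1}(t)t\ge 0$ is immediate since $f_\lambda^{-1}$ is odd and increasing.

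The only mildly delicate point — the ``main obstacle'' in an otherwise routine computation — is making the error term $O(z^{2r-p+1})$ rigorous rather than heuristic, i.e. controlling the remainder in the binomial expansion uniformly for small $\epsilon=\lambda z^{r-p}$. This is handled cleanly by the mean value form of the remainder: $(1+\epsilon)^{\frac{p+1}{p}} - 1 - \frac{p+1}{p}\epsilon = \frac12\cdot\frac{p+1}{p}\cdot\frac{1}{p}(1+\xi)^{\frac{1}{p}-1}\epsilon^2$ for some $\xi$ between $0$ and $\epsilon$, which is $O(\epsilon^2)$ since $(1+\xi)^{\frac{1}{p}-1}$ stays bounded for $\epsilon$ near $0$. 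Alternatively, one can avoid expansions altogether and argue more crudely: write the numerator as $z^{p+1}\big[(1+\lambda z^{r-p})^{\frac{p+1}{p}} - 1\big] - \lambda z^{r+1}$ and directly estimate the bracket from above and below by $\frac{p+1}{p}\lambda z^{r-p}$ up to a factor tending to $1$, using monotonicity of the derivative of $x\mapsto x^{\frac{p+1}{p}}$. Either route dispatches the limit, and the rest is bookkeeping.
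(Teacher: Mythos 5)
Your proof is correct and follows essentially the same route as the paper: both substitute $z=f_\lambda^{-1}(t)$ (the paper writes $t=f_\lambda(\tau)$) and analyze the resulting expression as the new variable tends to infinity, the only difference being that you evaluate the limit via a Taylor/binomial expansion with controlled remainder, whereas the paper reduces algebraically to a quotient in $y=\tau^{r-p}\to 0^+$ and applies L'H\^opital's rule. The deduction of \eqref{comportamentoinfinito} from the limit, including the evenness and nonnegativity remarks, matches the paper's (implicit) argument.
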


\begin{proof}

For $t> 0$, writing $t=f_\lambda(\tau)$,
\begin{multline}
\displaystyle\frac{t^{\frac{p+1}{p}}-f_\lambda^{-1}(t)t}{t^\frac{r+1}{p}}=\frac{(\lambda \tau^r +\tau^p)^\frac{p+1}{p}-\lambda \tau^{r+1}-\tau^{p+1}}{\left(\lambda \tau^r+\tau^p\right)^\frac{r+1}{p}}
=\frac{(\lambda \tau^{r-\frac{p(r+1)}{p+1}} +\tau^{p-\frac{p(r+1)}{p+1}})^\frac{p+1}{p}-\lambda-\tau^{p-r}}{\tau^\frac{(r-p)(r+1)}{p}\left(\lambda +\tau^{p-r}\right)^\frac{r+1}{p}}\\
=\frac{(\lambda+\tau^{p-r})[\tau^\frac{r-p}{p}(\lambda +\tau^{p-r})^\frac{1}{p}-1]}{\tau^\frac{(r-p)(r+1)}{p}\left(\lambda +\tau^{p-r}\right)^\frac{r+1}{p}}=\frac{\tau^\frac{r-p}{p}(\lambda +\tau^{p-r})^\frac{1}{p}-1}{\tau^\frac{(r-p)(r+1)}{p}\left(\lambda +\tau^{p-r}\right)^{\frac{r+1}{p}-1}}.
\end{multline}
Then, with $y=\tau^{r-p}$, $y\overset{t\to\infty}{\longrightarrow}0^+$, and it follows that 
\begin{equation}
\displaystyle\lim_{t\to \infty} \frac{t^{\frac{p+1}{p}}-f_\lambda^{-1}(t)t}{t^\frac{r+1}{p}}=\displaystyle\lim_{y\to0^+}\frac{y^{1/p}(\lambda + y^{-1})^{1/p}-1}{y^\frac{r+1}{p}(\lambda +y^{-1})^{\frac{r+1}{p}-1}}=\displaystyle\lim_{y\to0^+}\frac{(\lambda y + 1)^{1/p}-1}{y(\lambda y +1)^{\frac{r+1}{p}-1}},
\end{equation}
and applying the L'H\^opital rule,
\[
\displaystyle\lim_{t\to \infty} \frac{t^{\frac{p+1}{p}}-f_\lambda^{-1}(t)t}{t^\frac{r+1}{p}}=\lim_{y\to0^+}\frac{\frac{\lambda}{p}(\lambda y + 1)^{1/p-1}}{(\lambda y +1)^{\frac{r+1}{p}-1}+\left(\frac{r+1}{p}-1 \right)\lambda y(\lambda y +1)^{\frac{r+1}{p}-2}}=\frac{\lambda}{p}. \qedhere
\]
\end{proof}

\subsection{Upper bound for the Mountain Pass level}\label{sectionMP}

Let $(p,q)$ be on the critical hyperbola \eqref{pqHC} and $(\varphi, \psi)$ be a positive radial solution of the problem 
\begin{equation}\label{instanton}
-\Delta \varphi = |\psi|^{p-1}\psi, \ \ -\Delta \psi = |\varphi|^{q-1} \varphi, \ \ \text{in} \ \ \mathbb{R}^N,
\end{equation}
whose qualitative and quantitative properties are described in \cite{Hulshof-VanderVorst}. 

We recall that $(\varphi,\psi)$ has de following decay at infinity:
\begin{equation}
\begin{array}{lllll}
a) & \frac{2}{N-2}<p<\frac{N}{N-2}, & \displaystyle \lim_{t\rightarrow \infty} t^{p(N-2)-2}\varphi(t)=b & \text{and} & \displaystyle\lim_{t\rightarrow \infty} t^{N-2}\psi(t)=c, \\\vspace{2pt}

b) & p=\frac{N}{N-2}, & \displaystyle\lim_{t\rightarrow \infty} \dfrac{t^{N-2}}{\log{t}}\varphi(t)=b & \text{and} & \displaystyle\lim_{t\rightarrow \infty} t^{N-2}\psi(t)=c, \\ \vspace{2pt}

c) & \frac{N}{N-2}<p<\frac{N^2+2N-4}{N^2-4N+4}, & \displaystyle\lim_{t\rightarrow \infty} t^{N-2}\varphi(t)=b & \text{and} & \displaystyle\lim_{t\rightarrow \infty} t^{N-2}\psi(t)=c, \\\vspace{2pt}

d) & p=\frac{N^2+2N-4}{N^2-4N+4}, & \displaystyle\lim_{t\rightarrow \infty} t^{N-2}\varphi(t)=b & \text{and} &\displaystyle \lim_{t\rightarrow \infty} \dfrac{t^{N-2}}{\log{t}}\psi(t)=c, \\ \vspace{2pt}

e) & \frac{N^2+2N-4}{N^2-4N+4}<p, &\displaystyle \lim_{t\rightarrow \infty} t^{N-2}\varphi(t)=b & \text{and} &\displaystyle \lim_{t\rightarrow \infty} t^{q(N-2)-2}\psi(t)=c,
\end{array}
\end{equation}
where $b>0$ and $c>0$ are constants and $t=|x|$. Fix $a \in \Omega$. Let $\xi_a \in {\it C}_c^{\infty} (\R^N)$ be a function such that $0 \leq \xi_a(x)\leq 1$ for all $x \in \R^N$, $\xi_a \equiv 1$ in $B(a, \rho/2)$, $\xi_a \equiv 0$ in $B(a, \rho)^{c}$ and $B(a, \rho) \subset \subset \Omega$, $\rho >0$.

\begin{lem}\label{estimateFtv}
Suppose \eqref{rscondition} and let $U_{\delta,a}:=\delta^{\frac{-N}{q+1}}\xi_a(x)  \varphi\left(\frac{x-a}{\delta}\right)$, where $\varphi$ is defined by \eqref{instanton} and $V_{\delta,a}=|U_{\delta,a}|^{-1}_{q+1}U_{\delta,a}$. Then, for everery $t\in[m,\overline{m}]$, with $m>0$ and $m, \, \overline{m}$ independent of $\delta$, it holds:

\noindent $\bullet$ if $N=3$ with $p \leq 7/2$, or $N\geq4$ with $p\leq\frac{N+2}{N-2}$, then
\begin{equation}\label{termoLaplacianoA}
\int_\Omega f^{-1}_\lambda(t\Delta V_{\delta,a}(x)t\Delta V_{\delta,a}(x))dx<t^\frac{p+1}{p}S, \ \ \ \ \text{for $\delta>0$ suitably small}, 
\end{equation}
$\bullet$ if $N=3$ with $7/2<p<11$, or $N\geq4$ with $\frac{N+2}{N-2}<p\leq\frac{N^2+2N-4}{N^2-4N+4}$, then
\begin{equation}\label{termoLaplacianoB}
\!\!\!\int_\Omega f^{-1}_\lambda(t\Delta V_{\delta,a}(x)t\Delta V_{\delta,a}(x))dx\!<\!
\left\{ \begin{array}{lll}
 t^\frac{p+1}{p}S+c_1t^\frac{r+1}{r} \delta^{\frac{N(r+1)}{r(p+1)}}-c_2 t^\frac{r+1}{p}\delta^{\frac{N}{q+1}\frac{N}{N-2}}, &\text{if }\, r<\frac{2}{N-2},\\
 t^\frac{p+1}{p}S+c_1t^\frac{r+1}{r} \delta^{\frac{N(r+1)}{r(p+1)}}-c_2\delta^{\frac{N}{q+1}\frac{N}{N-2}}|\log(\delta)|, &\text{if }\, r=\frac{2}{N-2},\\
  t^\frac{p+1}{p}S+c_1t^\frac{r+1}{r} \delta^{\frac{N(r+1)}{r(p+1)}}-c_2 t^\frac{r+1}{p}\delta^{\frac{N(p-r)}{p+1}}, &\text{if }\, r>\frac{2}{N-2},
\end{array}
\right.
\end{equation}

$\bullet$ if $\frac{N^2+2N-4}{N^2-4N+4}<p$, then
\begin{equation}\label{termoLaplacianoC}
\!\!\!\int_\Omega f^{-1}_\lambda(t\Delta V_{\delta,a}(x)t\Delta V_{\delta,a}(x))dx\!<\!
\left\{ \begin{array}{lll}
 t^\frac{p+1}{p}S+c_1t^\frac{r+1}{r} \delta^{\frac{N(r+1)}{r(p+1)}}- c_2\lambda\delta^{\frac{Nq}{q+1}}|\log(\delta)|, & \! \text{if } r+1=\frac{p+1}{q+1},\\
 t^\frac{p+1}{p}S+c_1t^\frac{r+1}{r} \delta^{\frac{N(r+1)}{r(p+1)}}-c_2\lambda\delta^{\frac{Nq}{q+1}}, & \! \text{if } r+1<\frac{p+1}{q+1},\\
  t^\frac{p+1}{p}S+c_1t^\frac{r+1}{r} \delta^{\frac{N(r+1)}{r(p+1)}}-c_2\lambda\delta^{\frac{N(p-r)}{p+1}}, &  \! \text{if }  r+1>\frac{p+1}{q+1}.
\end{array}
\right.
\end{equation}
 \end{lem}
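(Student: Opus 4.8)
### Proof Strategy for Lemma \ref{estimateFtv}

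The plan is to reduce the statement to known Sobolev-type estimates for the extremal functions of the Lane--Emden critical system and then to carry out a careful Taylor expansion of $f_\lambda^{-1}$ near infinity. First I would recall from the work of Hulshof--Van der Vorst (cited in the excerpt) the precise asymptotics of the bubble-type functions $U_{\delta,a}$ and $V_{\delta,a}$: namely the expansions of $|\Delta V_{\delta,a}|_{(p+1)/p}^{(p+1)/p}$ and of $|V_{\delta,a}|_{r+1}^{r+1}$ in powers of $\delta$, with the threshold exponents dictated by the five decay regimes (a)--(e) listed above. The quantity $\int_\Omega f_\lambda^{-1}(t\Delta V_{\delta,a})\, t\Delta V_{\delta,a}\, dx$ should be split, using Lemma \ref{ineqftpr} or rather the sharper asymptotic Lemma \ref{lemac.a.}, into a dominant piece behaving like $t^{(p+1)/p}\int_\Omega |\Delta V_{\delta,a}|^{(p+1)/p}dx$ and a correction piece of order $t^{(r+1)/p}\int_\Omega |\Delta V_{\delta,a}|^{(r+1)/p}dx$, plus a positive lower-order term that will supply the $c_1 t^{(r+1)/r}\delta^{\cdots}$ contribution coming from the cut-off $\xi_a$ and from the region $|\Delta V_{\delta,a}|\le 2\lambda^{p/(p-r)}$ where $f_\lambda^{-1}$ behaves like a different power.

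Concretely, for the first bullet (the subcritical-in-$p$ range, $N=3,\ p\le 7/2$ or $N\ge 4,\ p\le\frac{N+2}{N-2}$), I would argue that the defect in the Sobolev quotient for $V_{\delta,a}$ is already of strictly higher order than any positive perturbation, so that $\int_\Omega f_\lambda^{-1}(t\Delta V_{\delta,a})t\Delta V_{\delta,a}\,dx \le t^{(p+1)/p}\bigl(S + o(1)\bigr)$ with the $o(1)$ negative for $\delta$ small; here one uses $f_\lambda^{-1}(\tau)\tau \le |\tau|^{(p+1)/p}$ from \eqref{ineqinv} together with the strict inequality $|\Delta V_{\delta,a}|_{(p+1)/p}^{(p+1)/p} < S$ valid in these dimensions, exactly paralleling the Brezis--Nirenberg estimate. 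For the second and third bullets one cannot discard the correction term: using \eqref{comportamentoinfinito} from Lemma \ref{lemac.a.}, I would write $f_\lambda^{-1}(t\Delta V)t\Delta V \le |t\Delta V|^{(p+1)/p} - c|t\Delta V|^{(r+1)/p}$ on the bulk region and estimate each integral separately. The term $\int_\Omega |t\Delta V_{\delta,a}|^{(p+1)/p}dx$ contributes $t^{(p+1)/p}S$ plus the $c_1 t^{(r+1)/r}\delta^{N(r+1)/(r(p+1))}$ boundary/cut-off defect, and the subtracted term $-c t^{(r+1)/p}\int_\Omega |\Delta V_{\delta,a}|^{(r+1)/p}dx$ is where the three subcases in \eqref{termoLaplacianoB} and \eqref{termoLaplacianoC} arise: the asymptotic order of $\int_\Omega |\Delta V_{\delta,a}|^{(r+1)/p}dx$ as $\delta\to 0$ depends on whether the integrand $|\varphi(\cdot/\delta)|^{(r+1)/p}$ (after unscaling) is integrable at infinity, borderline-integrable (giving the $|\log\delta|$ factors), or non-integrable (forcing the defect to scale from the cut-off region), and this dichotomy is precisely governed by comparing $r$ with $\frac{2}{N-2}$ or, in regime (e), by comparing $r+1$ with $\frac{p+1}{q+1}$.

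The main obstacle, and the place requiring the most delicate bookkeeping, is matching exponents in the competition between the positive defect $c_1 t^{(r+1)/r}\delta^{N(r+1)/(r(p+1))}$ and the negative gain $-c_2 t^{\bullet}\delta^{\bullet}$: one must verify that in every subcase the $\delta$-exponent of the negative term is strictly smaller than $N(r+1)/(r(p+1))$, so that for $\delta$ small the negative term dominates and the strict inequality $<t^{(p+1)/p}S$ (after absorbing into the level) holds — this is what ultimately yields $c_F < \frac{2}{N}S^{pN/(2(p+1))}$ in Proposition \ref{prop-mplevel}. I expect this to hinge on the condition $rs\ge 1$ together with $r<p$ from \eqref{rscondition} and on the integral estimates collected in the appendix; the case analysis is essentially a matter of plugging the decay rates (a)--(e) of $\varphi$ into $\int_{\mathbb{R}^N}|\varphi|^{(r+1)/p}$ and $\int_{\mathbb{R}^N}|\Delta\varphi|^{(r+1)/p}$-type integrals and reading off convergence versus divergence, which I would carry out regime by regime, uniformly in $t\in[m,\overline m]$ since all constants depend only on $\overline m$ and the fixed geometry.
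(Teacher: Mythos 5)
Your proposal correctly identifies the two key ingredients — the sharp asymptotic $f_\lambda^{-1}(\tau)\tau\le|\tau|^{(p+1)/p}-c|\tau|^{(r+1)/p}$ from Lemma~\ref{lemac.a.}, and the competition between the positive cut-off defect and the negative $\lambda$-gain — and this is in broad outline how the paper proceeds. But there is a genuine conceptual gap in your treatment of the first bullet. You write that in the range $N=3,\,p\le 7/2$ or $N\ge4,\,p\le\frac{N+2}{N-2}$ the inequality follows because ``$|\Delta V_{\delta,a}|_{(p+1)/p}^{(p+1)/p}<S$ valid in these dimensions,'' combined with the coarse bound $f_\lambda^{-1}(\tau)\tau\le|\tau|^{(p+1)/p}$. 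This is not correct and is not how the first bullet is obtained. By construction $\int_{\R^N}|\gamma_{\delta,a}|^{(p+1)/p}=S$, and the truncated quantity $\int_\Omega|\Delta V_{\delta,a}|^{(p+1)/p}$ differs from $S$ by a loss on the tail $\R^N\setminus B(a,\rho/2)$ and a gain on the annulus $R(a,\rho/2,\rho)$ from the cut-off terms $\sigma_{\delta,a}$; there is no reason for the net to be negative, and in fact the coarse bound $f_\lambda^{-1}(\tau)\tau\le|\tau|^{(p+1)/p}$ alone would never yield the \emph{strict} inequality $<t^{(p+1)/p}S$. The whole point of the first bullet is that the negative term $i_{\delta,a}$ coming from the $\lambda$-perturbation (Steps 1.1--1.3 of the paper, with dominant order collected in \eqref{feitoh}) has a strictly \emph{smaller} $\delta$-exponent than the positive cut-off residual $j_{\delta,a}\sim\delta^{\frac{N(r+1)}{r(p+1)}}$ from \eqref{feitoj}, and the exponent comparison in Step 3.3 is exactly what produces the $p\le 7/2$ restriction for $N=3$. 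So the case you dismiss as ``already'' controlled is in fact the delicate one and the source of the dimension $N=3$ threshold in Theorem~\ref{theo1}.

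A second, more technical gap: the asymptotic from Lemma~\ref{lemac.a.} only holds for $|\tau|\ge t_0$, so you cannot simply write $\int_\Omega f_\lambda^{-1}(t\Delta V)t\Delta V\le\int_\Omega|t\Delta V|^{(p+1)/p}-c\int_\Omega|t\Delta V|^{(r+1)/p}$ globally. The paper addresses this by splitting $B(a,\rho/2)$ into $B(a,\delta)$, a ring $R(a,\delta,\delta^M)$ (with $M<1$ chosen case-by-case so that $|\gamma_{\delta,a}|$ stays bounded below there), and the outer ring, and the dominant negative contribution comes specifically from the intermediate ring $R(a,\delta,\delta^M)$, not from a global integral of $|\Delta V|^{(r+1)/p}$. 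Your proposal treats $\int_\Omega|\Delta V_{\delta,a}|^{(r+1)/p}dx$ as a single global quantity whose convergence/divergence at infinity governs the subcases, which would require a separate justification on the region where $|t\Delta V|$ is small and does not by itself reproduce the logarithmic thresholds $r=\frac{2}{N-2}$ or $r+1=\frac{p+1}{q+1}$ with the correct powers of $\delta$. (Also note that in one place you write the decay of $\varphi$ where the relevant object is $\Delta\varphi\sim-\psi^p$; the decay rates are different and it is $\Delta\varphi$ that must be used.)
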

\begin{proof}
First, observe that
\[
\Delta V_\delta = \gamma_{\delta,a}(x) + \sigma_{\delta,a}(x), 
\]
where
 $$\gamma_{\delta,a}(x):= |U_\delta|^{-1}_{q+1}\delta^{\frac{-N}{q+1}}\delta^{-2} \Delta \varphi\left(\frac{x-a}{\delta}\right)$$
and
$$\sigma_{\delta,a}(x):=|U_\delta|^{-1}_{q+1}\delta^{\frac{-N}{q+1}} \left(2\delta^{-1} \nabla \xi_a(x)  \nabla \varphi\left(\frac{x-a}{\delta}\right)+\varphi\left(\frac{x-a}{\delta}\right)\Delta \xi_a(x)\right).
$$
So,
\begin{multline}
\displaystyle \int_\Omega f^{-1}_\lambda(t\Delta V_{\delta,a}(x))t\Delta V_{\delta,a}(x)dx
=\\\displaystyle  \int_{B(a,\rho/2)} f^{-1}_\lambda(t\xi_a(x)\gamma_{\delta,a}(x)+t\sigma_{\delta,a}(x))(t\xi_a(x)\gamma_{\delta,a}(x)
+t\sigma_{\delta,a}(x))dx\\
+\displaystyle\int_{\Omega\backslash B(a,\rho/2)}f^{-1}_\lambda(t\xi_a(x)\gamma_{\delta,a}(x)+t\sigma_{\delta,a}(x))(t\xi_a(x)\gamma_{\delta,a}(x)+t\sigma_{\delta,a}(x))dx,
\end{multline}
and since $supp\left(\Delta V_{\delta,a}(x)\right) \subset \overline{B}(a,\rho)$ and $supp( t\sigma_{\delta,a}(x))\subset \overline{R(a,\rho/2,\rho)}$, where $R(a, \rho/2, \rho):=B(a,\rho)\setminus \overline{B}(a,\rho/2)$, one has

\begin{multline}\label{eq:tudo}
\displaystyle \int_\Omega f^{-1}_\lambda(t\Delta V_{\delta,a}(x))t\Delta V_{\delta,a}(x)dx
=\displaystyle  \int_{B(a,\rho/2)} f^{-1}_\lambda(t\gamma_{\delta,a}(x))t\gamma_{\delta,a}(x)dx\\
+\displaystyle\int_{R(a,\rho/2,\rho)}f^{-1}_\lambda(t\xi_a(x)\gamma_{\delta,a}(x)+t\sigma_{\delta,a}(x))(t\xi_a(x)\gamma_{\delta,a}(x)+t\sigma_{\delta,a}(x))dx.
\end{multline}

Now we split the estimate in two steps, which correspond to the principal part
\begin{equation}\label{principalpart}
h_{\delta,a}:= \int_{B(a,\rho/2)}  f^{-1}_\lambda\left(t \gamma_{\delta,a}(x)\right)t \gamma_{\delta,a}(x),
\end{equation}
and to the residual part
\begin{equation}\label{residualpart}
j_{\delta,a}:= \displaystyle\int_{R(a,\rho/2,\rho)}f^{-1}_\lambda(t\xi_a(x)\gamma_{\delta,a}(x)+t\sigma_{\delta,a}(x))(t\xi_a(x)\gamma_{\delta,a}(x)+t\sigma_{\delta,a}(x))dx.
\end{equation}

\smallskip
\noindent \textbf{Step 1.} Estimate of \eqref{principalpart}

Using \eqref{ineqinv} and the asymptotic behavior of $\Delta \varphi$ as in \cite[Theorem 2]{Hulshof-VanderVorst} and \cite[Lemma 6.2]{ederson}, one gets

\begin{equation}\begin{array}{ccc}\label{eqparah}
 \displaystyle h_{\delta,a}=\int_{\R^N} \left|t \gamma_{\delta,a}(x)\right|^\frac{p+1}{p}dx + \int_{B(a,\rho/2)} f^{-1}_\lambda\left(t \gamma_{\delta,a}(x)\right)t \gamma_{\delta,a}(x)dx-\int_{\R^N} \left|t \gamma_{\delta,a}(x)\right|^\frac{p+1}{p}dx\\
\displaystyle = t^\frac{p+1}{p}S+ \int_{B(a,\rho/2)} f^{-1}_\lambda\left(t \gamma_{\delta,a}(x)\right)t \gamma_{\delta,a}(x)-|t\gamma_{\delta,a}(x)|^\frac{p+1}{p}dx-\int_{\R^N\setminus B(a,\rho/2)} \left|t \gamma_{\delta,a}(x)\right|^\frac{p+1}{p}dx.
 \end{array}
\end{equation}
The behavior of the last term is already known by \cite{ederson}, namely
\begin{equation}\label{ineqRN}
-\int_{\R^N\setminus B(a,\rho/2)} \left|t \gamma_{\delta,a}(x)\right|^\frac{p+1}{p}dx\leq \left\{
\begin{array}{lll}
-t^\frac{p+1}{p}C\delta^{\frac{N(p+1)}{q+1}},&\text{ if }q>\frac{N}{N-2},\\
-t^\frac{p+1}{p}C|\log(\delta)|^{p+1}\delta^{\frac{N(p+1)}{q+1}},&\text{ if }q=\frac{N}{N-2},\\
-t^\frac{p+1}{p}C\delta^{qN},&\text{ if }q<\frac{N}{N-2}.
\end{array}\right.
\end{equation}
Next, we estimate
\begin{equation}\label{eqdefi}
i_{\delta,a}:=\int_{B(a,\rho/2)} f^{-1}_\lambda\left(t \gamma_{\delta,a}(x)\right)t \gamma_{\delta,a}(x)- |t\gamma_{\delta,a}(x)|^\frac{p+1}{p} dx.
\end{equation}
We consider three parts of the ball $B(a,\rho/2)$, namely the ball $B(a,\delta)$ and the rings $R(a,\delta,\delta^M)$ and $R(a,\delta^M,\rho/2)$, where the number $M<1$ will be defined ahead. This splitting involving rings is key argument to capture the contribution of the term $\lambda |u|^{r-1}r$ to downsize the Mountain Pass level.
\medskip

\noindent \textbf{Step 1.1.} By the behavior of $\Delta \varphi(x)$ it is known that there exists (for $\delta$ sufficiently small) $c>0$ such that if $|x-a|<\delta$ then $c<t|\Delta \varphi(\frac{x-a}{\delta})|$, so \eqref{comportamentoinfinito} and $\frac{N+2(q+1)}{q+1} = \frac{pN}{p+1}$, can be used to infer that
\begin{multline}\label{eqtodopq}
\displaystyle\int_{B(a,\delta)} f^{-1}_\lambda\left(t \gamma_{\delta,a}(x)\right)t \gamma_{\delta,a}(x)-|t\gamma_{\delta,a}(x)|^\frac{p+1}{p}dx\displaystyle\leq -c\lambda \int_{B(a,\delta)}|t\gamma_{\delta,a}(x)|^\frac{r+1}{p}dx\\
\displaystyle \displaystyle\leq -c'\lambda \int_{B(a,\delta)}\delta^{-N\frac{r+1}{p+1}}dx=-C\lambda \delta^{N\frac{p-r}{p+1}}.
\end{multline}
\textbf{Step 1.2.} Now focus the attention on the $R(a,\delta,\delta^M)$-term. In this ring, since $1 < \frac{|x-a|}{\delta} < \delta^{M-1}$, with $M<1$ to be defined, the asymptotic decay of $\gamma_{\delta,a}(x)$ present in \cite[Lemma 6.2]{ederson} can be used, and three cases have to be analized.

\noindent {\bf Case 1:} $q>\frac{N}{N-2}$.\\
In this case it is known that
\[ \delta^\frac{-pN}{p+1}\left|\Delta \varphi\left(\frac{x-a}{\delta}\right)\right|\geq c |x-a|^{-p(N-2)}\delta^{p(N-2)-\frac{pN}{p+1}},\]
the last term is grater than a constant if $|x-a|\leq\delta^M$ with
 \[(1-M)p(N-2)-\frac{pN}{p+1}=0\iff M=\frac{p(N-2)-2}{(N-2)(p+1)}=\frac{N}{N-2}\frac{1}{q+1},\]
observing that $0<M<1$, and $\delta^M<\rho/2$ since $\delta \to 0$. Applying Lemma \ref{lemac.a.}, it follows that
\begin{multline}
\int_{R(a,\delta,\delta^M)} f^{-1}_\lambda\left(t \gamma_{\delta,a}(x)\right)t \gamma_{\delta,a}(x)-|t\gamma_{\delta,a}(x)|^\frac{p+1}{p}dx\leq -c\lambda\int_{R(a,\delta,\delta^M)} \left|t \gamma_{\delta,a}(x)\right|^\frac{r+1}{p}dx\\
  \leq\displaystyle-c\lambda \int_{R(a,\delta,\delta^M)} |x-a|^{-(r+1)(N-2)}\delta^{N\frac{r+1}{q+1}}dx
  \displaystyle=  -c\lambda\delta^{N\frac{r+1}{q+1}}\int_\delta^{\delta^M} y^{1-r(N-2)}dy,
  \end{multline}
  and hence
\[
 \displaystyle  \int_{R(a,\delta,\delta^M)} f^{-1}_\lambda\left(t \gamma_{\delta,a}(x)\right)t \gamma_{\delta,a}(x)-|t\gamma_{\delta,a}(x)|^\frac{p+1}{p}dx\leq
-\left\{\begin{array}{lll}
   c\lambda\delta^{\frac{N}{q+1}\frac{N}{N-2}}|\log(\delta)|, & \text{if } r=\frac{2}{N-2},\\
 \displaystyle c\lambda \frac{\delta^{\frac{N}{q+1}\frac{N}{N-2}}-\delta^{N\frac{p-r}{p+1}}}{2-r(N-2)}, & \text{if } r\neq\frac{2}{N-2}.
 \end{array}\right.
\]
 {\bf Case 2:} $q=\frac{N}{N-2}$.\\
In this case it is known that
 \[ \delta^\frac{-pN}{p+1}\left|\Delta \varphi\left(\frac{x-a}{\delta}\right)\right|\geq c \left(\log\left(\frac{|x-a|}{\delta}\right)+1\right)^{p} |x-a|^{-p(N-2)}\delta^{p(N-2)-\frac{pN}{p+1}},\]
so one can use $M=\frac{N}{N-2}\frac{1}{q+1}$ and proceed as in Case 1, to obtain the same estimate, which could be even better.

\noindent {\bf Case 3:} $q<\frac{N}{N-2}$.\\
In this case it is known that
 \[ \delta^\frac{-pN}{p+1}\left|\Delta \varphi\left(\frac{x-a}{\delta}\right)\right|\geq c |x-a|^{-\frac{p(q+1)N}{p+1}}\delta^{\frac{pqN}{p+1}},\]
 and this is grater than a constant if $|x-a|\leq\delta^M$ with
 \[ \frac{Mp(q+1)N}{p+1}=\frac{pqN}{p+1} \iff M=\frac{q}{q+1}.\]
So Lemma \ref{lemac.a.} can be applied, leading that

 \begin{multline}
\int_{R(a,\delta,\delta^M)} f^{-1}_\lambda\left(t \gamma_{\delta,a}(x)\right)t \gamma_{\delta,a}(x)-|t\gamma_{\delta,a}(x)|^\frac{p+1}{p}dx\leq -c\lambda\int_{R(a,\delta,\delta^M)} \left|t \gamma_{\delta,a}(x)\right|^\frac{r+1}{p}dx\\ 
 \leq \displaystyle-c\lambda\int_{R(a,\delta,\delta^M)} |x-a|^{-\frac{(r+1)(q+1)}{p+1}N}\delta^{\frac{r+1}{p+1}qN}dx
  \displaystyle=-c\lambda \delta^{\frac{r+1}{p+1}qN}\int_\delta^{\delta^M} y^{N-1-\frac{(q+1)(r+1)}{p+1}N}dy
 \end{multline}
 and so
 \[
\displaystyle \int_{R(a,\delta,\delta^M)} f^{-1}_\lambda\left(t \gamma_{\delta,a}(x)\right)t \gamma_{\delta,a}(x)-|t\gamma_{\delta,a}(x)|^\frac{p+1}{p}dx\leq
   -\left\{\begin{array}{lll}
   c\lambda\delta^{\frac{Nq}{q+1}}|\log(\delta)|, & \text{if } r+1=\frac{p+1}{q+1},\\
 \displaystyle c\lambda \frac{\delta^{\frac{Nq}{q+1}}-\delta^{N\frac{p-r}{p+1}}}{N-N\frac{(q+1)(r+1)}{p+1}}, & \text{if } r+1\neq\frac{p+1}{q+1}.
 \end{array}\right.
\]

\noindent
{\bf Step 1.3.} Finally we estimate the $R(a,\delta^M,\rho/2)$-term. In this ring, $\left| \frac{x-a}{\delta} \right| >1$, and the asymptotic behavior of $\gamma_{\delta,a}(x)$ present in \cite[Lemma 6.2]{ederson} can be used one more time, but in this case $\gamma_{\delta,a}$ becomes small, and then $ f^{-1}_\lambda\left(t \gamma_{\delta,a}(x)\right)t \gamma_{\delta,a}(x)-|t\gamma_{\delta,a}(x)|^\frac{p+1}{p}\leq -c_\lambda|t\gamma_{\delta,a}(x)|^\frac{p+1}{p}$, and one can proceed as in \eqref{ineqRN}. 

At this point, from Steps 1.1, 1.2, and 1.3, we can write the estimates for $i_{\delta, a}$ defined in \eqref{eqdefi}. But before doing this, note that in all the three cases of Step 1.2, the term $\delta^\frac{N(p-r)}{p+1}$ (dominant term in Step 1.1) appear, so it does not need to be repeated.

\begin{equation}\label{tudodei}
i_{\delta,a}\leq\left\{
\begin{array}{rl}
-c\lambda\delta^{\frac{N}{q+1}\frac{N}{N-2}}|\log(\delta)|-C\delta^{\frac{N(p+1)}{q+1}}, &\text{if }q>\frac{N}{N-2},\, r=\frac{2}{N-2},\\

- \displaystyle c\lambda \frac{\delta^{\frac{N}{q+1}\frac{N}{N-2}}-\delta^{N\frac{p-r}{p+1}}}{2-r(N-2)}-C\delta^{\frac{N(p+1)}{q+1}}, & \text{if }q>\frac{N}{N-2}, \, r\neq\frac{2}{N-2},\\

-c\lambda\delta^{\frac{N}{q+1}\frac{N}{N-2}}|\log(\delta)|-C|\log(\delta)|^{p+1}\delta^{\frac{N(p+1)}{q+1}}, & \text{if }q=\frac{N}{N-2},\,  r=\frac{2}{N-2},\\

- c\lambda \frac{\delta^{\frac{N}{q+1}\frac{N}{N-2}}-\delta^{N\frac{p-r}{p+1}}}{2-r(N-2)}-C|\log(\delta)|^{p+1}\delta^{\frac{N(p+1)}{q+1}}, & \text{if }q=\frac{N}{N-2}, \, r\neq\frac{2}{N-2},\\

-c\lambda\delta^{\frac{Nq}{q+1}}|\log(\delta)|- C\delta^{qN}, & \text{if }q<\frac{N}{N-2}, \, r+1=\frac{p+1}{q+1},\\

 -\displaystyle c\lambda \frac{\delta^{\frac{Nq}{q+1}}-\delta^{N\frac{p-r}{p+1}}}{N-N\frac{(q+1)(r+1)}{p+1}}- C\delta^{qN}, & \text{if }q<\frac{N}{N-2},\, \,r+1\neq\frac{p+1}{q+1}.

\end{array}\right.
\end{equation}

Now we summarize all the calculation made in Step 1.  To estimate $h_{\delta, a}$, from \eqref{eqparah}, we must deal with \eqref{ineqRN} and \eqref{tudodei}. Observe that the majorante in \eqref{ineqRN} also appears in the second terms in \eqref{tudodei}. Then, the estimate for \eqref{eqparah} follows from the comparison among the powers of $\delta$ in \eqref{tudodei}.

For $q>\frac{N}{N-2}$, the terms to be compared are

\begin{equation}\label{comp.q>}
-C\delta^{\frac{N(p+1)}{q+1}} \text{ and }-\left\{\begin{array}{lll}
   c\lambda\delta^{\frac{N}{q+1}\frac{N}{N-2}}|\log(\delta)|, & \text{if } r=\frac{2}{N-2},\\
 \displaystyle c\lambda \frac{\delta^{\frac{N}{q+1}\frac{N}{N-2}}-\delta^{N\frac{p-r}{p+1}}}{2-r(N-2)}, & \text{if } r\neq\frac{2}{N-2},
 \end{array}\right.
 \end{equation}
 and the first part is always weaker. Of course $N\frac{p-r}{p+1}>\frac{N}{q+1}\frac{N}{N-2}$ if $r<\frac{2}{N-2}$, and $\frac{N(p+1)}{q+1}>\frac{N}{q+1}\frac{N}{N-2}$ (since $p>\frac{2}{N-2}$ always), so in this case the dominant term is $-c\lambda\delta^{\frac{N}{q+1}\frac{N}{N-2}}$. The same analysis shows that if $r=\frac{2}{N-2}$ the dominant term is $ c\lambda\delta^{\frac{N}{q+1}\frac{N}{N-2}}|\log(\delta)|$. If $r>\frac{2}{N-2}$, then $N\frac{p-r}{p+1}<\frac{N}{q+1}\frac{N}{N-2}$ and the dominant term is $-c \delta^{N\frac{p-r}{p+1}}$.  

When $q=\frac{N}{N-2}$ the analysis done before gives that the dominant term is
\[-\left\{\begin{array}{lll}
   c\lambda\delta^{\frac{N}{q+1}\frac{N}{N-2}}|\log(\delta)|, & \text{if } r=\frac{2}{N-2},\\
 \displaystyle c\lambda \delta^{\frac{N}{q+1}\frac{N}{N-2}}, & \text{if } r<\frac{2}{N-2},\\
 -t^\frac{p+1}{p}C|\log(\delta)|^{p+1}\delta^{\frac{N(p+1)}{q+1}}-C\lambda \delta^{N\frac{p-r}{p+1}}&\text{if } r>\frac{2}{N-2}.
 \end{array}\right.\]
Finaly, if $q<\frac{N}{N-2}$ the terms that we have to compare are
\[-t^\frac{p+1}{p}C\delta^{qN} \text{ and }-\left\{\begin{array}{lll}
   c\lambda\delta^{\frac{Nq}{q+1}}|\log(\delta)|, & \text{if } r+1=\frac{p+1}{q+1},\\
 \displaystyle c\lambda \frac{\delta^{\frac{Nq}{q+1}}-\delta^{N\frac{p-r}{p+1}}}{N-N\frac{(q+1)(r+1)}{p+1}}, & \text{if } r+1\neq\frac{p+1}{q+1}.
 \end{array}\right.\]
If $r+1<\frac{p+1}{q+1}$, it is easy to see that $\frac{Nq}{q+1}<N\frac{p-r}{p+1}$ and surely $\frac{Nq}{q+1}<Nq,$ so $-c\lambda\delta^{\frac{Nq}{q+1}}$ is the dominant term. The same computation ensures that the dominant term is $-c\lambda\delta^{\frac{Nq}{q+1}}|\log(\delta)|,$  if  $r+1=\frac{p+1}{q+1}$. Finally, if $r+1>\frac{p+1}{q+1}$ an analogous computation show that the term $-C\lambda \delta^{N\frac{p-r}{p+1}}$ is the dominant. Then, putting it all together, 
\begin{equation}\label{feitoh}
h_{\delta,a}\leq t^{\frac{p+1}{p}}S - \left\{\begin{array}{rl}
c\lambda\delta^{\frac{N}{q+1}\frac{N}{N-2}}|\log(\delta)|, &\text{if }q\geq\frac{N}{N-2},\, r=\frac{2}{N-2},\\

 c\lambda \delta^{\frac{N}{q+1}\frac{N}{N-2}}, & \text{if }q\geq\frac{N}{N-2}, \, r<\frac{2}{N-2},\\
 c\lambda\delta^{\frac{N(p-r)}{p+1}}, & \text{if }q\geq\frac{N}{N-2}, \,  r>\frac{2}{N-2},\\

 \lambda\delta^{\frac{Nq}{q+1}}|\log(\delta)|, & \text{if }q<\frac{N}{N-2}, \, r+1=\frac{p+1}{q+1},\\
   
 c\lambda\delta^{\frac{Nq}{q+1}}, & \text{if }q<\frac{N}{N-2},\, \,r+1<\frac{p+1}{q+1},\\

 c\lambda\delta^{\frac{N(p-r)}{p+1}}, &  \text{if }q<\frac{N}{N-2},\, \,r+1>\frac{p+1}{q+1}.
\end{array}\right.
\end{equation}

\noindent{\bf Step 2} Estimate of the residual part \eqref{residualpart}.

Here $\left| \frac{x-a}{\delta} \right| \geq  \frac{\rho}{2\delta} \to \infty$, uniformly with respect to $x \in R(a,\rho/2,\rho)$, as $\delta \to 0$. Then the asymptotic behavior of $\gamma_{\delta,a }$ and $\sigma_{\delta,a}$ present in \cite[Lemma 6.2]{ederson} reads
\begin{equation}
\sigma_{\delta,a}(x) \leq\left\{
\begin{array}{ll}
 c|U_\delta|^{-1}_{q+1} \delta^{\frac{N}{p+1}}  (|x-a|^{-N+1}+|x-a|^{-N+2}),&\text{ if } p>\frac{N}{N-2},\\
 c|U_\delta|^{-1}_{q+1} \delta^{\frac{N}{p+1}} |\log(\delta)| (|\log|x-a|+1|)(|x-a|^{-N+1}(1+|x-a|)),&\text{ if } p=\frac{N}{N-2},\\
 c|U_\delta|^{-1}_{q+1} \delta^{\frac{pN}{q+1}}(|x-a|^{-p(N-2)+2}+|x-a|^{-p(N-2)+1}),& \text{ if } p<\frac{N}{N-2},
\end{array}\right.
\end{equation}
and,
\begin{equation}
\gamma_{\delta,a} \leq\left\{
\begin{array}{ll}
 c |x-a|^{-p(N-2)}\delta^{\frac{pN}{q+1}},&\text{ if } q>\frac{N}{N-2}\\
 c (\left|\log\left(|x-a|\right)\right|^{p+1}+1)^\frac{p}{p+1} |x-a|^{-p(N-2)}|\log\delta|^p\delta^{\frac{pN}{q+1}},&\text{ if } q=\frac{N}{N-2}\\
 c |x-a|^{\frac{p(q+1)N}{p+1}}\delta^{\frac{pqN}{p+1}}, &\text{ if } q<\frac{N}{N-2}.
\end{array}\right.
\end{equation}

From this, it follows that $|j_{\delta,a}|$ is bounded from above by
\begin{equation}
 |R(a, \rho/2, \rho)|\left\{\begin{array}{lll}
 f^{-1}_\lambda\left(ct\left( \delta^{\frac{pN}{q+1}}+ \delta^\frac{pN}{q+1}\right)\right)ct\left( \delta^{\frac{pN}{q+1}}+ \delta^{\frac{pN}{q+1}}\right),&\text{if }p<\frac{N}{N-2}, \\

 f^{-1}_\lambda\left(ct\left( \delta^{\frac{pN}{q+1}}+ \delta^{\frac{N}{p+1}} |\log(\delta)|\right)\right)ct\left( \delta^{\frac{pN}{q+1}}+ \delta^{\frac{N}{p+1}} |\log(\delta)|\right),& \text{if }p=\frac{N}{N-2},\\
 f^{-1}_\lambda\left(ct\left( \delta^{\frac{pN}{q+1}}+ \delta^{\frac{N}{p+1}}\right)\right)ct\left( \delta^{\frac{pN}{q+1}}+ \delta^{\frac{N}{p+1}}\right),& \text{if }p,q>\frac{N}{N-2}, \\
 f^{-1}_\lambda\left(ct\left( |\log\delta|^p\delta^{\frac{pN}{q+1}}+ \delta^{\frac{N}{p+1}}\right)\right)ct\left( |\log\delta|^p\delta^{\frac{pN}{q+1}}+ \delta^{\frac{N}{p+1}}\right),& \text{if }q=\frac{N}{N-2}, \\
 f^{-1}_\lambda\left(ct\left( \delta^{\frac{pqN}{p+1}}+ \delta^{\frac{N}{p+1}}\right)\right)ct\left( \delta^{\frac{pqN}{p+1}}+ \delta^{\frac{N}{p+1}}\right),& \text{if }q<\frac{N}{N-2},
\end{array}\right.
\end{equation}
\begin{equation}
\leq |R(a, \rho/2, \rho)|\left\{\begin{array}{lll}
 f^{-1}_\lambda\left(tc\delta^\frac{pN}{q+1}\right)tc\delta^{\frac{pN}{q+1}},&\text{ if }p<\frac{N}{N-2}, \\
 f^{-1}_\lambda\left(tc\delta^{\frac{N}{p+1}} |\log(\delta)|\right)tc\delta^{\frac{N}{p+1}} |\log(\delta)|,& \text{ if }p=\frac{N}{N-2},\\
 f^{-1}_\lambda\left(tc\delta^{\frac{N}{p+1}}\right)tc\delta^{\frac{N}{p+1}},& \text{ if }p,q>\frac{N}{N-2}, \\
 f^{-1}_\lambda\left(tc\delta^{\frac{N}{p+1}}\right)tc\delta^{\frac{N}{p+1}},& \text{ if }q=\frac{N}{N-2}, \\
 f^{-1}_\lambda\left(tc\delta^{\frac{N}{p+1}}\right)tc\delta^{\frac{N}{p+1}},& \text{ if }q<\frac{N}{N-2},
\end{array}\right.
\end{equation}
\begin{equation}
= |R(a, \rho/2, \rho)|\left\{\begin{array}{rll}
 f^{-1}_\lambda\left(tc\delta^\frac{pN}{q+1}\right)tc\delta^{\frac{pN}{q+1}},&\text{ if }p<\frac{N}{N-2}, \\
 f^{-1}_\lambda\left(tc\delta^{\frac{N}{p+1}} |\log(\delta)|\right)tc\delta^{\frac{N}{p+1}} |\log(\delta)|,& \text{ if }p=\frac{N}{N-2},\\
 f^{-1}_\lambda\left(tc\delta^{\frac{N}{p+1}}\right)tc\delta^{\frac{N}{p+1}},& \text{ if }p>\frac{N}{N-2},
\end{array}\right.
\end{equation}
and using the asymptotic behavior of $f_\lambda^{-1}$, one concludes that
\begin{equation}\label{feitoj}
|j_{\delta,a}|\leq\left\{\begin{array}{rll}
c_\lambda\left(t\delta^\frac{pN}{q+1}\right)^\frac{r+1}{r},&\text{ if }p<\frac{N}{N-2},\\

 c_\lambda\left(t\delta^{\frac{N}{p+1}} |\log(\delta)|\right)^\frac{r+1}{r},& \text{ if }p=\frac{N}{N-2},\\

 c_\lambda\left(t\delta^{\frac{N}{p+1}}\right)^\frac{r+1}{r},& \text{ if }p>\frac{N}{N-2}.
\end{array}\right.
\end{equation}

\noindent \textbf{Step 3:}  Comparison of the residual parts in \eqref{feitoh} and \eqref{feitoj}. 

\noindent \textbf{Step 3.1: $p<\frac{N}{N-2}$.}  First, observe that this implies $q> \frac{N}{N-2}$. To obtain \eqref{termoLaplacianoA}, from the comparison \eqref{comp.q>} to obtain \eqref{feitoh}, it is enough to verify that $\frac{N(p+1)}{q+1}<\frac{pN}{q+1}\frac{r+1}{r}$, which is equivalent to 
$p+1<p\frac{r+1}{r}$, that is $r<p$, which is always the case. Hence, the  Lemma is proved in this case.

\noindent \textbf{Step 3.2: $p=\frac{N}{N-2}$.}  Again this implies $q> \frac{N}{N-2}$, and the procedure to obtain \eqref{termoLaplacianoA} is identical to that from Step 3.1.

\noindent \textbf{Step 3.3: $\frac{N}{N-2}<p\leq\frac{N+2}{N-2}$.}

If $r<\frac{2}{N-2}$, to obtain \eqref{termoLaplacianoA}, from \eqref{feitoh} and \eqref{feitoj}, one must decide when $\frac{N^2}{(q+1)(N-2)}< {\frac{N}{p+1}\frac{r+1}{r}}$, that is\\
\[\frac{N(p+1)}{(q+1)(N-2)}<\frac{r+1}{r}\iff \frac{p(N-2)-2}{N-2}-1<\frac{1}{r}\iff r<\frac{1}{p-\frac{N}{N-2}},\]
and this is always true for $N\geq 4$. If $N=3$, these conditions read
\[
3 < p \leq 5, \quad r < 2, \quad \text{and} \quad r < \frac{1}{p-3}
\]
which are satisfied with the extra condition $p \leq 7/2$.

If $r=\frac{2}{N-2}$, then $\frac{r+1}{r} = \frac{N}{2}$, to obtain \eqref{termoLaplacianoA}, from \eqref{feitoh} and \eqref{feitoj}, one must decide when \[\frac{p+1}{q+1}\leq\frac{N-2}{2}\iff \frac{p(N-2)-2}{N}\leq\frac{N-2}{2} \iff p\leq \frac{N}{2} + \frac{2}{N-2},\]
and (remember $p\leq \frac{N+2}{N-2}$) this is always true for $N \geq 4$, and with $N=3$ these conditions read $p \leq 7/2$.

If $r>\frac{2}{N-2}$, to obtain \eqref{termoLaplacianoA}, from \eqref{feitoh} and \eqref{feitoj}, one must decide when $\frac{N(p-r)}{p+1} <\frac{N}{p+1}\frac{r+1}{r}$, that is
\begin{equation}\label{polinomio} p-r<\frac{r+1}{r}\iff 0<r^2+(1-p)r+1.\end{equation}

Remember that $p\leq \frac{N+2}{N-2}$. Then, for $N > 4$, \eqref{polinomio} is true because such second order polynomial has no real roots. For $N=4$ and $p < \frac{N+2}{N-2}$, again \eqref{polinomio} has no real root and \eqref{polinomio} is verified. For $N=4$ and $p= \frac{N+2}{N-2}$, such polynomial has $1$ as real root and $r > 1 = 2/(N-2)$, hence  \eqref{polinomio} is verified. With $N=3$, the largest real root of such polynomial is less or equal to $2$ for $p \leq 7/2$, hence \eqref{polinomio} is verified because $r> 2 = 2/(N-2)$.  This finishes the proof of the lemma. \end{proof}

\begin{obs}
The estimate of $\int_\Omega f^{-1}_\lambda(t\Delta V_{\delta,a}(x))t\Delta V_{\delta,a}(x)dx$ from Lemma \ref{estimateFtv} deserves some comments.  
\end{obs}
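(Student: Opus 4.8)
The plan is to use this remark to clarify three things about the estimate of Lemma~\ref{estimateFtv}: which of its three cases actually drives the existence result, what makes the negative correction appear, and why the dichotomy between $N\ge4$ and $N=3$ is encoded precisely here.

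First I would explain which bound is used and why. Along a Mountain Pass path through $V_{\delta,a}$ the energy $I_F$ is governed by $\int_\Omega \overline{F}_\lambda(t\Delta V_{\delta,a})\,dx$, and by Lemma~\ref{Ffeq} together with $r<p$ this is bounded above by $\tfrac{p}{p+1}\int_\Omega f_\lambda^{-1}(t\Delta V_{\delta,a})\,t\Delta V_{\delta,a}\,dx$; hence Lemma~\ref{estimateFtv} provides exactly the input needed in Proposition~\ref{prop-mplevel}, and it is stated for $t$ in a fixed compact interval $[m,\overline{m}]$ because the maximum defining $c_F$ is attained there, uniformly as $\delta\to0$. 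I would stress that only \eqref{termoLaplacianoA} — the regime in which the integral is \emph{strictly} below $t^{\frac{p+1}{p}}S$ — forces $c_F<\tfrac2N S^{\frac{pN}{2(p+1)}}$ and, with Proposition~\ref{propcompacidade}, yields a solution. Since on the critical hyperbola $\min\{p,q\}\le\tfrac{N+2}{N-2}$ always, and for $N=3$ the hypothesis of Theorem~\ref{theo1} ($p\le 7/2$ or $p\ge 8$) is precisely $\min\{p,q\}\le 7/2$, one may always apply \eqref{termoLaplacianoA} to whichever of $I_F,I_G$ corresponds to the smaller exponent; this is the ``without loss of generality $p\le q$'' reduction in the proof of Theorem~\ref{theo1}. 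The bounds \eqref{termoLaplacianoB}--\eqref{termoLaplacianoC} are never used to derive \eqref{termoLaplacianoA}: they only record a competition between a positive power $\delta^{\frac{N(r+1)}{r(p+1)}}$ coming from the residual part $j_{\delta,a}$ and a negative power coming from the principal part $h_{\delta,a}$, and they improve on the classical estimate only when the negative power is the smaller one.

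Second, I would emphasize the ring decomposition of $B(a,\rho/2)$ into $B(a,\delta)$, $R(a,\delta,\delta^M)$ and $R(a,\delta^M,\rho/2)$ as the genuinely new device. The subcritical term $\lambda|v|^{r-1}v$ with $r<p$ lowers the Mountain Pass level only where the rescaled instanton $\Delta\varphi((x-a)/\delta)$ is still bounded below, and this is not merely $B(a,\delta)$ but extends to radius $\delta^M$; the exponent $M$ (equal to $\tfrac{N}{N-2}\tfrac1{q+1}$ when $q\ge\tfrac{N}{N-2}$, and to $\tfrac q{q+1}$ when $q<\tfrac{N}{N-2}$) is pinned down exactly by requiring $\gamma_{\delta,a}$ not to drop below a constant on $R(a,\delta,\delta^M)$, via Lemma~\ref{lemac.a.}, while on $R(a,\delta^M,\rho/2)$ the instanton is already small and one recovers the classical Brezis--Nirenberg estimate. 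This is what lets the $\lambda$-term genuinely downsize the level, and it has no scalar counterpart.

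Third, and most importantly, I would draw the consequence for the critical dimension. For $N\ge4$, \eqref{termoLaplacianoA} applies to $\min\{p,q\}$ and hence covers the whole critical hyperbola. For $N=3$, \eqref{termoLaplacianoA} forces $p\le 7/2$ — equivalently $r<\tfrac1{p-3}$ for every admissible $r$ — so for $7/2<p<8$ neither $p$ nor $q$ is $\le 7/2$, and in \eqref{termoLaplacianoB}--\eqref{termoLaplacianoC} the negative power may be the larger one (for instance in the subcase $r>\tfrac2{N-2}$ this is the sign of $r^2+(1-p)r+1$, which can fail for $r$ close to $p$), so the argument cannot be pushed below the compactness threshold; this is the analytic origin of the critical region of Definition~\ref{def:critical}. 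Specializing to $r=s=1$, $\lambda=\mu$ and $\Omega=B(0,1)\subset\R^3$, the comparison of powers in \eqref{termoLaplacianoB} via Step~3.3 reduces to $1<\tfrac1{p-3}$, i.e.\ $p<4$, and by symmetry to $q<4$, i.e.\ $p>13/2$; hence the estimate still gives a solution on $(7/2,4)\cup(13/2,8)$, which, combined with the nonexistence of Brezis--Nirenberg at the diagonal point $p=q=5$ (\cite[Theorem~1.2]{BrezisNirenberg1983}), yields
\begin{equation}\label{eq:obsN3}
4\le p_*\le 5\le p^*\le \tfrac{13}{2}
\end{equation}
for the endpoints $p_*,p^*$ of Open Problem~2. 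The one delicate point in writing all this out is purely bookkeeping: one must follow the dominant power of $\delta$ through the six sub-cases of \eqref{tudodei}, the three sub-cases of \eqref{feitoj}, and the sign of $r^2+(1-p)r+1$, keeping every estimate uniform for $t\in[m,\overline{m}]$.
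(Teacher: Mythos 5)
Your proposal is correct and shares the paper's central observation: the negative remainder carried by the principal part $h_{\delta,a}$ must dominate the residual part $j_{\delta,a}$, and when $N\geq 4$, or $N=3$ with $p\leq 7/2$, it does, which is precisely what pushes the Mountain Pass level below the compactness threshold. Your discussion is considerably more comprehensive than the paper's brief one-paragraph comment --- covering the ring decomposition and the choice of $M$, the reduction to $\min\{p,q\}$ via symmetry of the hyperbola, and the deduction of $4\le p_*\le 5\le p^*\le\tfrac{13}{2}$ --- but all of it is accurate and consistent with material the paper distributes across the Introduction, the proof of Lemma~\ref{estimateFtv}, and Remark~\ref{rmkfinal} together with the computations that follow it.
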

When evaluating $\int_\Omega f^{-1}_\lambda(t\Delta V_{\delta,a}(x))t\Delta V_{\delta,a}(x)dx$, the leading term comes from \linebreak $\int_{B(0,\rho/2)} f^{-1}_\lambda\left(t\gamma_{\delta,a}(x)\right)t\gamma_{\delta,a}(x)dx$, which carries  by itself a (negative) remainder that has to be compared with the residual term $ \int_{R(0,\rho/2,\rho)} f_\lambda^{-1}(t\Delta V_{\delta,a}(x))t\Delta V_{\delta,a}(x) dx$. With $N\geq 4$ or $N=3$ and $p\leq7/2$ the remainder $\int_{R(0,\rho/2,\rho)}   f_\lambda^{-1}(t\Delta V_{\delta,a}(x))t\Delta V_{\delta,a}(x) dx$ is smaller than the negative part of the remainder term in $\int_{B(0,\rho/2)} f^{-1}_\lambda(t\Delta V_{\delta,a}(x))t\Delta V_{\delta,a}(x)dx$, which brings down the functional when comparing it to the problem without the perturbation $\lambda u^r$, and this plays an important role in the results in this paper. At this point, it is important to compare the estimates \eqref{termoLaplacianoA}, \eqref{termoLaplacianoB} and \eqref{termoLaplacianoC} with \cite[Eq. (6.4)]{ederson}.

\medskip We are almost prepared for the proof of Proposition \ref{prop-mplevel}. Going on this direction, observe that if \eqref{pqHC} and \eqref{rscondition} are satisfied, then
\begin{equation}
\lim_{t\to\infty}I_F(tV_{\delta,a})=-\infty
\end{equation}
and the $\max_{t\geq0}I(tV_{\delta,a})$ is achieved at some $t_\delta>0$, thus,
\begin{equation}\label{I'tv}
0=I_F'(t_\delta V_{\delta,a})=\displaystyle\int_\Omega f_\lambda^{-1}(t_\delta \Delta V_{\delta,a})\Delta V_{\delta,a}\, dx-t_\delta^s|V_{\delta,a}|^{s+1}_{s+1}-t_\delta^q,
\end{equation}
from where we infer that
\begin{equation}\label{tdeltaq+1}
t_\delta^{q+1}=\displaystyle\int_\Omega f_\lambda^{-1}(t_\delta \Delta V_{\delta,a})t_\delta\Delta V_{\delta,a}\, dx-t_\delta^{s+1}|V_{\delta,a}|^{s+1}_{s+1}.
\end{equation}
 
\begin{lem}\label{tdeltaltd}
Suppose \eqref{pqHC} and \eqref{rscondition}. Then $t_\delta$, as $\delta\rightarrow0$, is bounded form below and above. \end{lem}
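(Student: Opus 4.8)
The plan is to pin down $t_\delta$ from the Euler--Lagrange identity \eqref{tdeltaq+1} together with the elementary bounds on $f_\lambda^{-1}$, and to get the lower bound from the (strictly positive) Mountain Pass level. The only input about the concentrating functions that I will use is that $\|V_{\delta,a}\|$ stays bounded as $\delta\to 0$: since $|V_{\delta,a}|_{q+1}=1$ by construction, the standard instanton estimates recorded in \cite{Hulshof-VanderVorst} and \cite{ederson} (the same ones behind Lemma \ref{estimateFtv}) give $\|V_{\delta,a}\|^{\frac{p+1}{p}}=\int_\Omega|\Delta V_{\delta,a}|^{\frac{p+1}{p}}\,dx\to S^{\frac{p+1}{p}}$, so there are $\delta_0>0$ and $C_0>0$ with $\|V_{\delta,a}\|\le C_0$ for all $0<\delta<\delta_0$. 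I will also use the elementary remark that $\frac{1}{p+1}+\frac{1}{q+1}=\frac{N-2}{N}<1$ is equivalent to $pq>1$, hence $q+1-\frac{p+1}{p}=\frac{pq-1}{p}>0$.

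For the \emph{upper bound}, I would start from \eqref{tdeltaq+1}, drop the nonnegative term $t_\delta^{s+1}|V_{\delta,a}|_{s+1}^{s+1}$, and use $0\le f_\lambda^{-1}(\tau)\tau\le|\tau|^{\frac{p+1}{p}}$ from \eqref{ineqinv}:
\[
t_\delta^{q+1}\ \le\ \int_\Omega |t_\delta\Delta V_{\delta,a}|^{\frac{p+1}{p}}\,dx\ =\ t_\delta^{\frac{p+1}{p}}\,\|V_{\delta,a}\|^{\frac{p+1}{p}}\ \le\ C_0^{\frac{p+1}{p}}\,t_\delta^{\frac{p+1}{p}} .
\]
Dividing by $t_\delta^{\frac{p+1}{p}}$ and using $\frac{pq-1}{p}>0$ yields $t_\delta\le C_0^{\frac{p+1}{pq-1}}$ for all $\delta\in(0,\delta_0)$.

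For the \emph{lower bound}, since $I_F(tV_{\delta,a})\to-\infty$ as $t\to\infty$, the ray $t\mapsto tV_{\delta,a}$ provides, after the obvious reparametrisation on $[0,1]$, an admissible path for the Mountain Pass level of $I_F$, so $c_F\le\max_{t\ge0}I_F(tV_{\delta,a})=I_F(t_\delta V_{\delta,a})$, and $c_F>0$ by Proposition \ref{prop-mpgeometry}. Combining this with the upper estimate $I_F(u)\le\frac{p}{p+1}\|u\|^{\frac{p+1}{p}}-\frac{\mu}{s+1}|u|_{s+1}^{s+1}-\frac{1}{q+1}|u|_{q+1}^{q+1}\le\frac{p}{p+1}\|u\|^{\frac{p+1}{p}}$ coming from \eqref{ineq1'}, I obtain
\[
0<c_F\ \le\ I_F(t_\delta V_{\delta,a})\ \le\ \frac{p}{p+1}\,\|V_{\delta,a}\|^{\frac{p+1}{p}}\,t_\delta^{\frac{p+1}{p}}\ \le\ \frac{p}{p+1}\,C_0^{\frac{p+1}{p}}\,t_\delta^{\frac{p+1}{p}},
\]
which forces $t_\delta\ \ge\ \left(\dfrac{(p+1)\,c_F}{p\,C_0^{(p+1)/p}}\right)^{\frac{p}{p+1}}>0$, a bound independent of $\delta\in(0,\delta_0)$.

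I do not expect a genuine obstacle here: everything reduces to the two inequalities for $f_\lambda^{-1}$ already proved in the Appendix plus the sign of $pq-1$. The two points that deserve a line of justification are the uniform bound $\|V_{\delta,a}\|\le C_0$ (which is precisely the instanton estimate underlying Lemma \ref{estimateFtv}) and the strict positivity $c_F>0$ (valid under the standing assumptions of this subsection, in particular \eqref{mulambda} when $rs=1$).
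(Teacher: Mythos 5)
Your argument is correct, but it takes a different route from the paper for the lower bound. The paper proves $t_\delta\nrightarrow 0$ by contradiction directly from \eqref{I'tv}: splitting $\Omega$ according to the size of $|t_\delta\Delta V_{\delta,a}|$ and using that $f_\lambda^{-1}(\tau)\gtrsim \tau^{1/r}$ for small arguments, it gets $Ct_\delta^{1/r}\leq t_\delta^{s}\,o(1)+t_\delta^{q}\,O(1)$, which is impossible since $1/r\leq s$, $1/r<q$ and $|V_{\delta,a}|_{s+1}^{s+1}\to 0$ (in the borderline case $rs=1$ the $o(1)$ factor produces the contradiction). You instead use positivity of the mountain pass level together with $\overline{F}_\lambda(\tau)\leq\frac{p}{p+1}|\tau|^{\frac{p+1}{p}}$, which is clean and shorter; the price is that you invoke $c_F>0$, i.e. the mountain pass geometry, which in the case $rs=1$ requires the smallness condition \eqref{mulambda} --- an assumption not present in the statement of the lemma (which only assumes \eqref{pqHC} and \eqref{rscondition}), although it is in force in Proposition \ref{prop-mplevel} where the lemma is used. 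So your proof is valid in the context where the lemma is applied, while the paper's contradiction argument establishes the lemma as stated, without \eqref{mulambda}. For the upper bound your argument is essentially the paper's: both rest on \eqref{tdeltaq+1}, the bound $f_\lambda^{-1}(\tau)\tau\leq|\tau|^{\frac{p+1}{p}}$, the uniform boundedness of $\|V_{\delta,a}\|$ coming from the instanton estimates, and $pq>1$; whether the limit of $\int_\Omega|\Delta V_{\delta,a}|^{\frac{p+1}{p}}dx$ is written as $S$ or $S^{\frac{p+1}{p}}$ is a matter of normalization and immaterial, since only boundedness enters.
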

\begin{proof}
Suppose by contradiction  that $t_\delta \overset{\delta\to 0}{\longrightarrow}0$. Define $A_\delta:=\{x\in \Omega;|t_\delta \Delta V_{\delta,a}(x)|<1+\lambda \}$ and $B_\delta=\Omega\backslash A_\delta$, small $\delta$ give us
\begin{multline}
Ct_\delta^{1/r}  \leq  \displaystyle \frac{t_\delta^{1/p}}{(1+\lambda)^{1/p}} \int_{B_\delta}  |\Delta V_{\delta,a} (x)|^\frac{p+1}{p}dx + \frac{t_\delta^{1/r}}{(1+\lambda)^{1/r}} \int_{A_\delta} |\Delta V_{\delta,a} (x)|^\frac{r+1}{r}dx\\\displaystyle \leq  \int_\Omega  f_\lambda^{-1}(t_\delta \Delta V_{\delta,a} (x))\Delta V_{\delta,a} (x)dx=t_\delta^s \,o(\delta)+t_\delta^q\, O(1).
\end{multline}
which is a contradiction by the fact that $1/r\leq s$ and $1/r<q$, so $t_\delta\nrightarrow0$. Observe that in the case $rs=1$ the $o(\delta)$ produces the contradiction.\\

Now observe that  by \eqref{I'tv} and the estimates present in \cite{EdJe-djairo}
\begin{equation}
t_\delta^{q} \leq t_\delta^\frac{1}{p} S+t_\delta^\frac{1}{p}o(\delta) \Longrightarrow t_\delta^\frac{pq-1}{p} \leq  S+o(\delta)
\end{equation}
that is, $t_\delta\leq k<\infty$ for all $\delta$ sufficiently small.
\end{proof}

\begin{proof}[\textbf{Proof of Proposition \ref{prop-mplevel}}]

Now,  using  Lemma \ref{Ffeq} and identity \eqref{tdeltaq+1}
\begin{multline}\label{idnova}
\max_{t\geq0} I_F(tV_{\delta,a}) =  I_F(t_\delta V_{\delta,a})=  \int_\Omega \overline{F}_\lambda(t_\delta \Delta V_{\delta,a})dx-\frac{t_\delta^{q+1}}{q+1} - \frac{\mu}{s+1} t_\delta^{s+1}|V_{\delta,a}|^{s+1}_{s+1}\\
 = \frac{p}{p+1}\int_\Omega f_\lambda^{-1}(t_\delta \Delta V_{\delta,a})t_\delta\Delta V_{\delta,a} dx-\lambda\frac{p-r}{p+1}\frac{\left|f_\lambda^{-1}(t_\delta \Delta V_{\delta,a})\right|^{r+1}_{r+1}}{r+1} -\frac{t_\delta^{q+1}}{q+1}- \frac{\mu t_\delta^{s+1}|V_{\delta,a}|^{s+1}_{s+1}}{s+1} \\
 = \frac{2}{N}\int_\Omega f_\lambda^{-1}(t_\delta \Delta V_{\delta,a})t_\delta\Delta V_{\delta,a} dx-\lambda\frac{p-r}{p+1}\frac{\left|f_\lambda^{-1}(t_\delta \Delta V_{\delta,a})\right|^{r+1}_{r+1}}{r+1}  - \frac{\mu(q-s)t_\delta^{s+1}|V_{\delta,a}|^{s+1}_{s+1}}{(q+1)(s+1)} .
\end{multline}

\noindent By \eqref{I'tv} and  Lemma \ref{estimateFtv} one gets
\begin{equation}\label{tdelta}
t_\delta^q\leq t_\delta^{1/p}S-Ct_\delta^{1/p}\delta^{\frac{N(p+1)}{q+1}}-t_\delta^s \mu|V_{\delta,a}|^{s+1}_{s+1}\quad \Rightarrow \quad t_\delta< S^\frac{p}{pq-1}.
\end{equation}
Combining this with \eqref{idnova} and  Lemma \ref{estimateFtv}, we infer that
\begin{equation}
\max_{t\geq0} I_F(tV_{\delta,a})< \displaystyle\frac{2}{N}t_\delta^\frac{p+1}{p}S- \frac{\mu(q-s)}{(q+1)(s+1)} t_\delta^{s+1}|V_{\delta,a}|^{s+1}_{s+1}< \frac{2}{N}S^\frac{pN}{2(p+1)},
\end{equation}
which concludes the proof.
\end{proof}

\begin{obs}\label{rmkfinal} For $N=3$, we mention that the estimates from \eqref{termoLaplacianoB} and \eqref{termoLaplacianoC} can be used to prove the existence of a positive solution to \eqref{sist} for the pairs $(p,q)$ on the critical hyperbola \eqref{pqHC} that are not included in Theorem \ref{theo1}, namely with $7/2< p<8$, and for some (not all) $(r,s)$ as in \eqref{rscondition}. This remark is linked to the condition $3 < t < 5$ in \cite[Corollary 2.3]{BrezisNirenberg1983} to prove the existence of a solution to \eqref{eq:BN}.
\end{obs}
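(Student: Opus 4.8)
The plan is to derive the claim by rerunning the proof scheme of Theorem \ref{theo1}, the single modification being that Proposition \ref{prop-mplevel} has to be re-established, for $N=3$ and $7/2<p<8$, out of estimate \eqref{termoLaplacianoB} rather than \eqref{termoLaplacianoA}. Recall that, for $(p,q)$ on \eqref{pqHC} and $(r,s)$ as in \eqref{rscondition}, the mountain pass geometry (Proposition \ref{prop-mpgeometry}), the $(PS)_c$ condition for $c<\frac{2}{N}S^{pN/2(p+1)}$ (Proposition \ref{propcompacidade}), the regularity equivalence (Lemma \ref{regularity Lemma}) and the sign argument at the end of the proof of Theorem \ref{theo1} are all available with no restriction on $p$. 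Hence, for a fixed $p\in(7/2,8)$ and a well chosen pair $(r,s)$, it will be enough to produce the strict inequality $c_F<\frac{2}{N}S^{pN/2(p+1)}$ in order to obtain a positive classical solution of \eqref{sist}. Observe that $p<8<11=\frac{N^2+2N-4}{N^2-4N+4}$ for $N=3$, so Lemma \ref{estimateFtv} furnishes \eqref{termoLaplacianoB} (estimate \eqref{termoLaplacianoC} would only enter for $p\geq11$).

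First, I would choose $(r,s)$ as follows. Let $r_\ast(p):=\tfrac12\big((p-1)+\sqrt{(p-3)(p+1)}\big)$ be the larger root of the quadratic $r^2-(p-1)r+1$ from \eqref{polinomio}. An elementary computation shows $2\leq r_\ast(p)<p$ for every $p\in[7/2,8)$, with $r_\ast(7/2)=2$. Fix any $r\in(r_\ast(p),p)$ and any $s\in(1/r,q)$; both intervals are nonempty, the second because $r>2$ gives $1/r<1/2<7/2<q$, the last inequality holding on \eqref{pqHC} for $N=3$ and $p<8$. Then $0<r<p$, $0<s<q$ and $rs>1$, so \eqref{rscondition} holds and, since $rs>1$, condition \eqref{mulambda} is not needed. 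As $r>2=2/(N-2)$, the third alternative of \eqref{termoLaplacianoB} applies: for $t$ in the fixed compact interval of Lemma \ref{tdeltaltd},
\begin{equation}
\int_\Omega f^{-1}_\lambda(t\Delta V_{\delta,a})\,t\Delta V_{\delta,a}\,dx< t^{\frac{p+1}{p}}S+c_1 t^{\frac{r+1}{r}}\delta^{\frac{N(r+1)}{r(p+1)}}-c_2 t^{\frac{r+1}{p}}\delta^{\frac{N(p-r)}{p+1}}.
\end{equation}
Since $r>r_\ast(p)$ gives $r^2-(p-1)r+1>0$, i.e. $p-r<\frac{r+1}{r}$, we get $\frac{N(r+1)}{r(p+1)}>\frac{N(p-r)}{p+1}$, so the sum of the last two terms is negative for all small $\delta>0$, uniformly in $t$ over that compact interval; hence the right-hand side is strictly smaller than $t^{\frac{p+1}{p}}S$.

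With this $(r,s)$ fixed, the proof of Proposition \ref{prop-mplevel} goes through verbatim with \eqref{termoLaplacianoB} replacing \eqref{termoLaplacianoA}: Lemma \ref{tdeltaltd} still keeps $t_\delta$ in a fixed compact subinterval of $(0,\infty)$; the displayed bound, plugged into \eqref{I'tv}, still gives $t_\delta<S^{p/(pq-1)}$ for small $\delta$; and, substituting the displayed bound and $t_\delta<S^{p/(pq-1)}$ into identity \eqref{idnova} and using $q>s$ and $\mu>0$,
\begin{equation}
\max_{t\geq0}I_F(tV_{\delta,a})<\frac{2}{N}t_\delta^{\frac{p+1}{p}}S-\frac{\mu(q-s)}{(q+1)(s+1)}t_\delta^{s+1}|V_{\delta,a}|_{s+1}^{s+1}<\frac{2}{N}S^{\frac{pN}{2(p+1)}},
\end{equation}
the last step using $t_\delta^{\frac{p+1}{p}}S<\big(S^{p/(pq-1)}\big)^{\frac{p+1}{p}}S=S^{\frac{p(q+1)}{pq-1}}=S^{\frac{pN}{2(p+1)}}$. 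Together with $c_F>0$ from Proposition \ref{prop-mpgeometry}, this yields $c_F\in\big(0,\frac{2}{N}S^{pN/2(p+1)}\big)$, and the mountain pass theorem combined with Propositions \ref{prop-mpgeometry} and \ref{propcompacidade}, Lemma \ref{regularity Lemma} and the sign argument from the proof of Theorem \ref{theo1} produces a positive classical solution of \eqref{sist}. As a concrete instance within this range, for $7/2<p<4$ one may even take $r=s=1$ (with $\lambda\mu$ small so that \eqref{mulambda} holds): this lands in the first alternative of \eqref{termoLaplacianoB}, and the required exponent comparison becomes $\frac{r+1}{r}=2>p-2$, i.e. $p<4$. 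This is the exact analogue, for the system \eqref{sist} in the critical dimension $N=3$, of the restriction $3<t<5$ in \cite[Corollary 2.3]{BrezisNirenberg1983} for \eqref{eq:BN}.

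The main obstacle is the exponent bookkeeping in \eqref{termoLaplacianoB}: for each $p\in(7/2,8)$ one must identify a sub-case of \eqref{termoLaplacianoB} and a pair $(r,s)$ compatible with \eqref{rscondition} for which the positive remainder $c_1\delta^{N(r+1)/(r(p+1))}$ produced by the perturbation $\lambda|u|^{r-1}u$ is, as $\delta\to0$, negligible against the negative remainder coming from the profile $V_{\delta,a}$. As explained above this reduces to the quadratic inequality $r^2-(p-1)r+1>0$ together with the elementary bound $r_\ast(p)<p$; all the remaining ingredients are identical to those used in the proof of Theorem \ref{theo1}.
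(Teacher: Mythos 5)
Your argument is correct and follows the paper's own reasoning: you invoke \eqref{termoLaplacianoB} and reduce the level estimate to the exponent comparison $\frac{N(r+1)}{r(p+1)}>\frac{N(p-r)}{p+1}$, which is exactly the quadratic condition $r^2-(p-1)r+1>0$ appearing in the paper's \eqref{eq:obsN3}, and then run the standard mountain pass machinery from Sections 2 and 3 unchanged. The one thing you add is to make the remark fully explicit by checking that the window $(r,s)\in(r_\ast(p),p)\times(1/r,q)$ is nonempty, and the one thing you leave out is the paper's alternative sufficient route via the $s$-perturbation term \eqref{comp.s}; since the remark only asserts existence for \emph{some} $(r,s)$, neither difference is substantive.
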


By \eqref{I'tv} and  Lemma \ref{estimateFtv} one gets
\begin{equation}\label{tdelta}
t_\delta^q\leq t_\delta^{1/p}S+c_1t_{\delta}^\frac{r+1}{r} \delta^{\frac{3(r+1)}{r(p+1)}}+i_{\delta,a}-t_\delta^s \mu|V_{\delta,a}|^{s+1}_{s+1}.
\end{equation}
Combining this with \eqref{idnova} and  Lemma \ref{estimateFtv}, we infer that
\begin{equation}
\max_{t\geq0} I_F(tV_{\delta,a})\leq \displaystyle\frac{2}{3}t_\delta^\frac{p+1}{p}S+c_1t_\delta^\frac{r+1}{r} \delta^{\frac{3(r+1)}{r(p+1)}}+i_{\delta,a}- \frac{\mu(q-s)}{(q+1)(s+1)} t_\delta^{s+1}|V_{\delta,a}|^{s+1}_{s+1},
\end{equation}
and this is smaller than $\frac{2}{3}S^\frac{3p}{2(p+1)}$ if, and only if, 
\begin{equation}\label{eqcase2}
C\delta^{\frac{3(r+1)}{r(p+1)}}\leq c_1\mu|V_{\delta,a}|^{s+1}_{s+1}-i_{\delta,a} = c_1\mu|V_{\delta,a}|^{s+1}_{s+1} + |i_{\delta,a}|.
\end{equation}
To get \eqref{eqcase2} it is sufficient to verify, as $\delta\rightarrow0$, that
\begin{equation}\label{comp.r}
c \delta^\frac{3(r+1)}{r(p+1)}< |i_{\delta,a}|,
\end{equation}
or
 \begin{equation}\label{comp.s}
c \delta^\frac{3(r+1)}{r(p+1)}<c_1\mu|V_{\delta,a}|^{s+1}_{s+1}= \left\{\begin{array}{ll}
C\mu \delta^{\frac{3(s+1)}{p+1}},& \text { if } s<2, \\
C\mu \delta^{\frac{9}{p+1}}|\log \delta|,& \text { if } s=2, \\
C\mu \delta^{3-\frac{3(s+1)}{q+1}},& \text { if } s>2,
\end{array}\right. \end{equation}
where the behavior of $|V_{\delta,a}|_{s+1}$ in \eqref{comp.s} is given in \cite[eq. (36)]{EdJe-djairo}. 

To obatin \eqref{comp.r}, we keep all the calculation from Step 3.3 for the case with $7/2< p \leq (N+2)/(N-2)=5$. Then, we execute similar estimates for $5 < p < 8$. Therefore, for $7/2 < p <8$, using the residual terms in the first three lines (that is $q\geq\frac{N}{N-2}=3$) of \eqref{feitoh}, which is a refinement of \eqref{tudodei}, one can see that \eqref{comp.r} holds, iff
\begin{equation}\label{eq:obsN3}
 \begin{array}{rl}
r<\frac{1}{p-3} &\text{ if } r<2,\\
p\leq \frac{7}{2} &\text{ if } r=2,\\
0<r^2+(1-p)r+1&\text{ if } r>2,
\end{array}
\end{equation}
otherwise, the term of \eqref{feitoj} is dominant.

Let us now consider \eqref{comp.s}. For $s<2$ the inequality is equivalent to $\frac{r+1}{r}>s+1$, that is $rs<1$, which is a contradiction with condition \eqref{rscondition}. For $s=2$ the inequality is equivalent to $\frac{r+1}{r}\geq3$, that is $r\leq 1/2$, and this together with \eqref{rscondition}, gives $r=1/2$. Finally, for $s>2$ the inequality is true if 
\begin{equation}\label{comprs3}
\frac{3(r+1)}{r(p+1)}> 3-\frac{3(s+1)}{q+1} \quad\text{that is,} \quad s+1> q+1 - \frac{r+1}{r}\frac{q+1}{p+1}=\frac{q+1}{p+1}(p-\frac{1}{r}).
\end{equation}

Therefore, given any $(p,q)$ on the critical hyperbola \eqref{pqHC} with $N=3$, $7/2< p < 8$, $(r,s)$ as in \eqref{rscondition} with the one of the extra conditions \eqref{eq:obsN3}, $(r,s)= (1/2,2)$ or \eqref{comprs3}, the mountain pass level of $I_F$ is in the range of compactness and the mountain pass theorem ensures the existence of a solution.

\bibliographystyle{abbrv}

\end{document}